\newtheorem{defin}{{\bf Definition}}[section]
\newtheorem{thm}[defin]{{\bf Theorem}}
\newtheorem{lem}[defin]{{\bf Lemma}}
\newtheorem{rem}[defin]{{\bf Remark}}
\newtheorem{cor}[defin]{{\bf Corollary}}
\newcommand{\tr}{{\mathrm {tr}}}
\newcommand{\coll}{{\mathrm {coll}}}
\newcommand{\lift}{{\mathrm {lift}}}
\newcommand{\recipe}{{\mathrm {recipe}}}
\newcommand{\age}{{\mathrm {Age}}}
\newcommand{\N}{\mathbb{N}}
\newcommand{\funct}[2]{#1 \longrightarrow #2}
\newcommand{\m}[1]{\textbf{#1}}
\newcommand{\md}[1]{\textbf{#1}'}
\newcommand{\Cy}{\textbf{S}(2)}
\newcommand{\Cz}{\textbf{S}(3)}
\newcommand{\arrows}[3]{\longrightarrow {#1}^{#2}_{#3}}
\newcommand{\restrict}[2]{#1\mathbin{\upharpoonright} #2}
\begin{document}

\title[Ramsey expansions of homogeneous directed Graphs]{Ramsey precompact expansions of homogeneous directed graphs}

\author[J.Jasi\'{n}ski]{Jakub Jasi\'{n}ski}
\address{University of Calgary, Department of Mathematics and Statistics, Calgary, Alberta, Canada T2N 1N4} 
\email{jjasinsk@ucalgary.ca} 

\author[C.Laflamme] {Claude Laflamme*}
\thanks{*Supported by NSERC of Canada Grant \# 690404}
\address{University of Calgary, Department of Mathematics and Statistics, Calgary, Alberta, Canada T2N 1N4} 
\email {laflamme@ucalgary.ca} 

\author [L.Nguyen Van Th\'e]{Lionel Nguyen Van Th\'e**}
\thanks{**Supported by A.N.R. project Grupoloco (ANR-11-JS01-0008)}
\address{Aix-Marseille Universit\'e, CNRS, Centrale Marseille, I2M, UMR
7373, 13453 Marseille, France}
\email{lionel@latp.univ-mrs.fr}

\author[R.Woodrow] {Robert Woodrow}
\address{University of Calgary, Department of Mathematics and Statistics, Calgary, Alberta, Canada T2N 1N4} 
\email {woodrow@ucalgary.ca} 

\begin{abstract} In 2005, Kechris, Pestov and Todor\v{c}evi\'{c} provided a
powerful tool to compute an invariant of topological groups known as
the universal minimal flow, immediately leading to an explicit
representation of this invariant in many concrete cases. More
recently, the framework was generalized allowing for further
applications, and the purpose of this paper is to apply these new
methods in the context of homogeneous directed graphs.

In this paper, we show that the age of any
homogeneous directed graph allows a \emph{Ramsey precompact
expansion}. Moreover, we verify the relative expansion properties
and consequently describe the respective universal minimal flows.
\end{abstract}

\date{}
\maketitle

\section{Introduction}

The article \cite{key-KPT} by Kechris, Pestov and Todor\v{c}evi\'{c}
established a fundamental correspondence between structural Ramsey
theory and topological dynamics. As an immediate consequence, it
triggered a renewed interest for structural Ramsey theory, and also
motivated a more detailed investigation of the connection between
combinatorics and Polish group actions.  More precisely,
\cite{key-KPT} provided an extremely powerful tool to compute an
invariant known as the \emph{universal minimal flow}, and immediately
led to an explicit representation of this invariant in many concrete
cases, including the case of most homogeneous (simple, loopless)
graphs. However, in some particular situations including some
homogeneous directed graphs, that framework did not allow to perform
the computation directly.

Recently, the {\em ordering property} technique was generalized in
\cite{key-NVT} to the notion of {\em precompact expansion}, opening
the door for further applications. This was in particular used to
compute the universal minimal flows of the automorphism groups of the
homogeneous circular directed graphs $\Cy$ and $\Cz$. The purpose of
this paper is to apply these new methods in the context of homogeneous
directed graphs; following the classification by Cherlin in
\cite{key-C}, we show that in each case its age allows a Ramsey
precompact expansion. In this paper, it will be shown that in
addition, this expansion can be used to compute the universal minimal
flow of the corresponding automorphism group.

The underlying strategy we will use in order to construct Ramsey
expansions can be described as follows: starting from a homogeneous
directed graph, if the new age is not Ramsey by simply adding any
well-chosen linear ordering, then further expand using finitely
many additional relations.

\bigskip

In an effort to make the current paper relatively self-contained, we
review the main results and notation of \cite{key-KPT} and
\cite{key-NVT} relevant to the current work.  In what follows, $\N$
denotes the set $\{ 0, 1, 2, \ldots\}$ of natural numbers, and for a
natural number $m$, $[m]$ will denote the set $\{ 0, \ldots,
m-1\}$. We will assume that the reader is familiar with the concepts
of first order logic, first order structures, Fra\"iss\'e theory (cf
\cite{key-KPT}, section 2), reducts and expansions (cf \cite{key-KPT},
section 5). If $L$ is a first order signature and $\m A$ and $\m B$
are $L$-structures, we will write $\m A\leq \m B$ when $\m A$ embeds
in $\m B$, $\m A \subset \m B$ when $\m A$ is a substructure of $\m
B$, and $\m A \cong \m B$ when $\m A$ and $\m B$ are isomorphic.  If
$C$ is a subset of the universe of $\m A$ which supports a
substructure of $\m A$, we will write $\restrict{\m A}{C}$ for the
corresponding substructure.

A \emph{Fra\"iss\'e class} in a countable first order relational
language $L$ will be a countable class of finite $L$-structures of
arbitrarily large finite sizes, satisfying the hereditarity, joint
embedding and amalgamation property, and a \emph{Fra\"iss\'e
structure} (or \emph{Fra\"iss\'e limit}) in $L$ will be a countable,
locally finite, homogeneous $L$-structure. In general a relational
structure is called \emph{homogeneous} if any isomorphism between
finite substructures can be extended to an automorphism of the entire
structure. In \cite{key-KPT}, two combinatorial properties of classes
of finite structures have a considerable importance, those are the
\emph{Ramsey property} and the \emph{ordering property}. In order to
define the Ramsey property, let $k, l\in \N$, and $\m A, \m B, \m C$
be $L$-structures. The set of all \emph{copies} of $\m A$ in $\m B$ is
written using the binomial notation 
\[ \binom{\m B}{\m A} = \{ \md A \subset \m B : \md A \cong \m A\}.\] 
We use the standard arrow partition symbol 
\[\m C \arrows{(\m B)}{\m A}{k, l}\] 
to mean that for every map $c: \funct{\binom{\m C}{\m A}}{[k]}$,
thought as a $k$-coloring of the copies of $\m A$ in $\m C$, there is
a copy $\md B \in \binom{\m C}{\m B}$ such that $c$ takes at most
$l$-many values on $\binom{\md B}{\m A}$. When $l=1$, this is written
\[ \m C \arrows{(\m B)}{\m A}{k}.\] A class $\mathcal{K}$ of finite
$L$-structures is then said to have the \emph{Ramsey property} when
the following statement holds: 
\[ \forall k \in \N \quad \forall \m A, \m B \in \mathcal{K} \quad \exists \m C \in \mathcal{K} \quad \m C
\arrows{(\m B)}{\m A}{k}. \] 
When $\mathcal{K} = \age (\m F)$, where $\m F$ is a Fra\"iss\'e
structure, this is equivalent, via a compactness argument to: \[
\forall k \in \N \quad \forall \m A, \m B \in \mathcal{K} \quad \m F
\arrows{(\m B)}{\m A}{k}.\]

As for the ordering property used in \cite{key-KPT}, assume that $<$
is a binary relation symbol not contained in $L$, and that
$L^{*}=L\cup\{<\}$. For $\mathcal{K}$, a Fra\"iss\'e class in
$L$, we write $\mathcal{K}^{*}$ for an \emph{order expansion} of
$\mathcal K$ in $L^{*}$. This means that all elements of
$\mathcal{K}^{*}$ are of the form $\m A ^{*} = (\m A, <^{\m A})$,
where $\m A \in \mathcal{K}$ and $<^{\m A}$ is a linear ordering on
the universe $A$ of $\m A$; thus $\m A$ is then the \emph{reduct} of
$\m A^{*}$ to $L$ and is also denoted $\restrict{\m A^{*}}{L}$, or
even $\restrict{\m A^{*}}{\mathcal{K}}$, and that, conversely, any $\m
A \in \mathcal K$ admits a linear ordering $<^{\m A}$ so that $(\m A,
<^{\m A}) \in \age({\mathcal K}^{*})$. Then, $\mathcal K ^{*}$ is said
to satisfy the \emph{ordering property} relative to $\mathcal K$ if,
for every $ \m A \in \mathcal{K}$, there exists $ \m B \in
\mathcal{K}$ such that \[ \forall \m A^{*}, \m B^{*} \in\mathcal{K}
^{*} \quad (\restrict{\m A^{*}}{L} = \m A \ \ \wedge \ \ \restrict{\m
B^{*}}{L} = \m B) \Rightarrow \m A^{*} \leq \m B^{*}. \] The ordering
property is precisely the technique that was generalized in
\cite{key-NVT} to \emph{precompact (relational) expansions}. For such
expansions, we do not require $L^{*}=L\cup \{<\}$, but only $L^{*} =
L\cup \{ R_{i}:i\in I\}$, where $I$ is countable, and every symbol
$R_{i}$ is relational and not in $L$. An expansion $\m F^{*}$ of $\m
F$ is then called \emph{precompact} when any $\m A \in \age(\m F)$
only has finitely many expansions in $\age(\m F^{*})$.

In the spirit as above, we say that $\mathcal{K}^{*}$ has the
\emph{expansion property} relative to $\mathcal{K}$ if and only if for
every $\m A \in \mathcal{K}$, there is $\m B \in \mathcal{K}$ such
that 
\[ \forall \m A^{*}, \m B^{*} \in\mathcal{K} ^{*} \quad
(\restrict{\m A^{*}}{L} = \m A \ \ \wedge \ \ \restrict{\m B^{*}}{L} =
\m B) \Rightarrow \m A^{*} \leq \m B^{*}. \]
The importance of the new technique is demonstrated in the following
result, generalizing the results of \cite{key-KPT} where the ordering
property had been used:

\begin{thm} \cite{key-NVT} \label{thm:EP}
Let $\m F$ be a Fra\" iss\'e structure, and $\m F^{*}=(\m F, \vec R^{*})$ a (not
necessarily Fra\"iss\'e) precompact relational expansion of $\m
F$. Then the  following are equivalent:

\begin{enumerate} 
\item[i)] The logic action of $\mathrm{Aut}(\mathbf{F})$ on the space
$\overline{\mathrm{Aut}(\mathbf{F})\cdot \vec R^{*}}$ is
minimal. \item[ii)] $\age(\m F^{*})$ has the expansion property
relative to $\age(\m F)$. 
\end{enumerate} 
\end{thm}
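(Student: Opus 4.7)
\noindent\emph{Proof proposal.} My plan is to split the argument into the two implications, using throughout the standard identification of the flow $X := \overline{\mathrm{Aut}(\m F)\cdot \vec R^{*}}$ with the set of $L^{*}$-expansions $\vec S^{*}$ of $\m F$ such that $(\restrict{\m F}{A},\restrict{\vec S^{*}}{A}) \in \age(\m F^{*})$ for every finite $A \subseteq F$. This equivalence is immediate from homogeneity of $\m F$: any finite-set match between two $L^{*}$-expansions is realised by an automorphism via the usual extension argument, and conversely membership in an orbit closure is witnessed finite-set by finite-set.

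For $(ii)\Rightarrow(i)$, fix $\vec S^{*} \in X$ and a finite $A \subseteq F$, and set $\m A = \restrict{\m F}{A}$, $\m A^{*} = (\m A,\restrict{\vec R^{*}}{A})$. The expansion property supplies $\m B \in \age(\m F)$ such that $\m A^{*}$ embeds into every expansion of $\m B$ lying in $\age(\m F^{*})$. Pick any copy of $\m B$ inside $\m F$ and let $\m B^{*}$ be its $\vec S^{*}$-restriction, which by the description of $X$ belongs to $\age(\m F^{*})$. An embedding $f : \m A^{*} \to \m B^{*}$ provided by EP is in particular an $L$-isomorphism from $\m A$ onto its image inside $\m F$, so by homogeneity $f^{-1}$ extends to some $g \in \mathrm{Aut}(\m F)$; a direct unwinding of the logic action then yields $\restrict{(g\cdot\vec S^{*})}{A} = \restrict{\vec R^{*}}{A}$. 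Hence $\vec R^{*}$ lies in the orbit closure of every $\vec S^{*} \in X$, so the action on $X$ is minimal.

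For $(i)\Rightarrow(ii)$, I would argue by contradiction. Suppose EP fails at some $\m A \in \age(\m F)$ and list its expansions in $\age(\m F^{*})$ (finitely many, by precompactness) as $\m A^{*}_1,\ldots,\m A^{*}_n$. For each $\m B \in \age(\m F)$ define
\[S(\m B) = \{ i \in [n] : \exists\,\m B^{*} \in \age(\m F^{*}),\ \restrict{\m B^{*}}{L} = \m B \text{ and } \m A^{*}_i \not\leq \m B^{*}\}.\]
EP failing at $\m A$ says $S(\m B) \neq \emptyset$ for all $\m B$, while hereditarity of $\age(\m F^{*})$ makes $S$ monotone: $\m B \leq \m B'$ gives $S(\m B) \supseteq S(\m B')$, because restricting a bad expansion of $\m B'$ to a copy of $\m B$ yields a bad expansion of $\m B$. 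Hence this decreasing chain of non-empty subsets of the finite set $[n]$ stabilises, and we may fix $i_0$ lying in $S(\m B)$ for every $\m B \in \age(\m F)$.

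A routine compactness argument now produces the desired contradiction. The space of all $L^{*}$-expansions of $\m F$ with fixed universe $F$ is compact in the product topology, and for each finite $A' \subseteq F$ the set $Y_{A'}$ of expansions whose $A'$-restriction lies in $\age(\m F^{*})$ and omits $\m A^{*}_{i_0}$ is clopen, non-empty by the previous paragraph applied to $\restrict{\m F}{A'}$, and downward-directed in $A'$. Their intersection therefore contains some $\vec S^{*} \in X$ with $\m A^{*}_{i_0}\not\leq (\m F,\vec S^{*})$. Since embeddability of $\m A^{*}_{i_0}$ is preserved by the logic action, while $\m A^{*}_{i_0} \leq (\m F,\vec R^{*})$ by construction, the orbit closure of $\vec S^{*}$ cannot contain $\vec R^{*}$, contradicting minimality. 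The main obstacle I anticipate is coupling the pigeonhole step that isolates a single offending $\m A^{*}_{i_0}$ with the compactness step that patches local witnesses into a global $\vec S^{*}$; everything else is careful bookkeeping of the logic action together with one extension by homogeneity.
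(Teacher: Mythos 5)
The paper does not actually prove Theorem \ref{thm:EP} (it is quoted from \cite{key-NVT}), and your argument is correct and essentially reproduces the standard proof from that source: identify $\overline{\aut(\m F)\cdot \vec R^{*}}$ with the set of expansions all of whose finite restrictions lie in $\age(\m F^{*})$, use the expansion property plus homogeneity of $\m F$ to put $\vec R^{*}$ in every orbit closure, and for the converse combine failure of the expansion property, a pigeonhole over the finitely many expansions of $\m A$ (this is where precompactness enters), and compactness of the space of expansions to build a point whose orbit closure misses $\vec R^{*}$. Two cosmetic remarks: the family $\{S(\m B)\}$ is a directed family (via joint embedding in $\age(\m F)$) rather than a literal chain, which is what the stabilisation step really uses, and when the expansion language is infinite the sets $Y_{A'}$ are only closed rather than clopen --- both points are harmless for your compactness argument.
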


\begin{thm} \cite{key-NVT} \label{thm:UMF}
Let $\m F$ be a Fra\" iss\'e structure, $\m F^{*}=(\m F, \vec R^{*})$ be a
Fra\"iss\'e precompact relational expansion of $\m F$, and further assume that
$\age(\m F^{*})$ consists of rigid elements. Then the following are
equivalent:

\begin{enumerate}
\item[i)] The logic action of $\mathrm{Aut}(\mathbf{F})$ on the space $\overline{\mathrm{Aut}(\mathbf{F})\cdot \vec R^{*}}$ is its  universal minimal flow. 
\item[ii)] The class $\age(\m F^{*})$ has the Ramsey property as well as the expansion property relative to $\age(\m F)$. 
\end{enumerate}
\end{thm}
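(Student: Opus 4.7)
The plan is to reduce this theorem to two prior equivalences and glue them together via a standard dynamical argument. The first ingredient is Theorem \ref{thm:EP} above, which already delivers minimality $\Leftrightarrow$ expansion property. The second is the precompact relational version of the Kechris--Pestov--Todor\v{c}evi\'{c} correspondence established in \cite{key-NVT}: for a Fra\"iss\'e precompact relational expansion $\m F^{*}$ whose age consists of rigid structures, the group $\aut(\m F^{*})$ is extremely amenable if and only if $\age(\m F^{*})$ has the Ramsey property. The bridge between these two black boxes is the observation that, under the logic action of $\aut(\m F)$ on the space of $L^{*}$-expansions of $\m F$, the stabilizer of $\vec R^{*}$ is precisely $\aut(\m F^{*})$, so the orbit closure $\overline{\aut(\m F)\cdot \vec R^{*}}$ is the natural $\aut(\m F)$-compactification of the homogeneous space $\aut(\m F)/\aut(\m F^{*})$.

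For (ii) $\Rightarrow$ (i), the expansion property together with Theorem \ref{thm:EP} yields minimality of the flow. To upgrade to universality, I would follow the template of Section 7 of \cite{key-KPT}: Ramsey plus rigidity gives extreme amenability of $\aut(\m F^{*})$; then, for any minimal $\aut(\m F)$-flow $X$ and any $x_{0}\in X$, the restricted action of $\aut(\m F^{*})$ on $X$ admits a fixed point $x_{1}$, and the assignment $g\cdot \vec R^{*}\mapsto g\cdot x_{1}$ is a well-defined $\aut(\m F)$-equivariant map on the orbit of $\vec R^{*}$ which extends by continuity to the orbit closure and is onto by minimality of $X$. Every minimal flow is thus a factor of $\overline{\aut(\m F)\cdot \vec R^{*}}$, which is exactly what it means for the latter to be the universal minimal flow.

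For (i) $\Rightarrow$ (ii), universality implies minimality, so Theorem \ref{thm:EP} again yields the expansion property. For the Ramsey property, observe that $\vec R^{*}$ is fixed by $\aut(\m F^{*})$; since $\overline{\aut(\m F)\cdot \vec R^{*}}$ is the universal minimal flow, it maps $\aut(\m F)$-equivariantly onto every minimal $\aut(\m F)$-flow $X$, and the image of $\vec R^{*}$ is an $\aut(\m F^{*})$-fixed point of $X$. Hence $\aut(\m F^{*})$ is extremely amenable, and combined with the standing rigidity assumption, the reverse direction of the precompact KPT/NVT dictionary delivers the Ramsey property.

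The main conceptual content has already been absorbed into the two black-box equivalences, so what remains is essentially assembly in the precompact relational setting. The subtle step is the extension-by-continuity of the equivariant map in the universality argument for (ii) $\Rightarrow$ (i): one must verify that $g\cdot \vec R^{*}\mapsto g\cdot x_{1}$ is uniformly continuous with respect to the restriction of the product topology on the space of $L^{*}$-expansions of $\m F$. This is precisely where precompactness is used, as it ensures that on each finite substructure of $\m F$ there are only finitely many admissible $\vec R^{*}$-expansions to distinguish, making the relevant quotient space of $\aut(\m F)$ well behaved.
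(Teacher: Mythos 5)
The statement you are proving is quoted in the paper as a black box from \cite{key-NVT}; the paper itself offers no proof, so the relevant comparison is with the argument in that reference. Your direction (ii) $\Rightarrow$ (i) follows the standard route and is essentially correct: Ramsey plus rigidity gives extreme amenability of $\aut(\m F^{*})$, the stabilizer of $\vec R^{*}$ under the logic action is exactly $\aut(\m F^{*})$, so $g\cdot \vec R^{*}\mapsto g\cdot x_{1}$ is well defined, and right uniform continuity together with precompactness (which identifies $\overline{\aut(\m F)\cdot \vec R^{*}}$ with the completion of $\aut(\m F)/\aut(\m F^{*})$) lets you extend to the orbit closure; minimality of the flow itself comes from Theorem \ref{thm:EP}.

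The gap is in (i) $\Rightarrow$ (ii). From universality you extract only that every \emph{minimal} $\aut(\m F)$-flow has an $\aut(\m F^{*})$-fixed point (namely the image of $\vec R^{*}$), and you then conclude that $\aut(\m F^{*})$ is extremely amenable. That inference is invalid: extreme amenability of a closed subgroup $H\leq G$ requires a fixed point in \emph{every} $H$-flow, equivalently in the greatest ambit $S(H)$, and this is not implied by fixed points of $H$ in minimal $G$-flows. Concretely, take $G=\aut(\mathbb Q,<)$, which is extremely amenable, and $H$ the closed discrete subgroup generated by the shift $x\mapsto x+1$; every minimal $G$-flow is a single point, so $H$ trivially fixes a point in each of them, yet $H\cong\mathbb Z$ is not extremely amenable. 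Even the stronger property that every $G$-flow has an $H$-fixed point (``relative extreme amenability'' of the pair) is weaker than extreme amenability of $H$, so pushing the fixed point $\vec R^{*}$ through factor maps cannot, by itself, deliver the Ramsey property. The proof in \cite{key-NVT} must (and does) use the precompact structure of the specific flow $\overline{\aut(\m F)\cdot \vec R^{*}}$ in this direction in an essential way, whereas in your write-up precompactness only enters the continuity step of the converse direction; as it stands, the Ramsey half of (i) $\Rightarrow$ (ii) is unproved.
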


\noindent Thus, given $\m F$ a Fra\" iss\'e structure, our general
objective is to find an expansion $\m F^{*}$ of $\m F$ that is
precompact, has the expansion property,  its age is Ramsey and consists of rigid members.  Note that if the age of $F^{*}$ consists of rigid members and satisfies the Ramsey property and the joint embedding property, then it also has the amalgamation property (see \cite{key-KPT}).  In
this paper, we complete this program for all homogeneous directed
graphs, moreover we do so using a well chosen and minimal number of
new relations. All arguments are very much standard, except those used
for the case of the semigeneric directed graph in Section
\ref{section:semi}.

\textbf{Acknowledgements}: This project was finished while both of us,
Claude Laflamme and Lionel Nguyen Van Th\'e, were attending the thematic
program Universality and Homogeneity at the Hausdorff Research Institute
for Mathematics in Bonn. We would therefore like to acknowledge the
support of the Hausdorff Research Institute for Mathematics, and thank the
organizers Alexander Kechris, Katrin Tent and Anatoly Vershik for having
made this stay possible.  We would also like to thank Miodrag Soki\'{c} for
his numerous suggestions.

\section{Homogeneous Directed Graphs}

\label{section:homogeneous}

A \emph{graph} $G$ is a pair $(V(G),E(G))$ where $V(G)$ is the set of
\emph{vertices} and $E(G)$ is the set of (undirected) edges.  A simple
(or undirected) graph is one where $E$ is a symmetric (and
irreflexive), and a \emph{directed graph} is one where $E$ is
asymmetric (and irreflexive). A \emph{tournament} is a complete
directed graph, that is every pair of distinct vertices supports
exactly one directed edge. A \emph{hypergraph} is a generalization of
a graph where edges are non-empty subsets, and we call it a
\emph{k-uniform hypergraph} if all these edges have the same size $k$.
A \emph{(2)-set system} in the sense of \cite{key-NR} is a linearly
ordered 2-uniform hypergraph where edges also carry a label.  The following
Ramsey result from \cite{key-NR} will play a central role in this
paper.

\begin{thm} \cite{key-NR}\label{thm:ramseysetsystems}
Given two finite set systems $A$ and \textbf{$B$} and $r\in\N$, there is a set system $C$ such that
\[ C\rightarrow(B)_{r}^{A}. \] 
\end{thm}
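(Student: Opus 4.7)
The plan is to establish this via the classical \emph{partite construction} of Ne\v{s}et\v{r}il and R\"odl. A $(2)$-set system is naturally encoded as a finite structure in a binary relational language $\{<, R_1, \ldots, R_\ell\}$, where $<$ is interpreted as a linear order and each $R_i$ as a symmetric edge predicate, one per label; from this viewpoint the statement is a particular case of the structural Ramsey theorem for finite linearly ordered binary relational structures, for which the partite method from \cite{key-NR} is tailor made.

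The first ingredient I would develop is the \emph{Partite Lemma}. Given an $A$-partite system $P$---that is, a set system whose vertex set is partitioned into $|V(A)|$ ordered parts so that every embedding $A \hookrightarrow P$ is a transversal respecting the partite order---I would produce an $A$-partite $P'$ such that every $r$-coloring of its transversal copies of $A$ admits a monochromatic partite-embedded copy of $P$. The proof is by induction on the number of transversal copies of $A$ inside $P$: the base case (a single copy) reduces directly to the ordinary finite Ramsey theorem, and the inductive step handles one additional copy by amalgamating several disjoint copies of the current $P$ along the shared partite skeleton and then applying Ramsey to the set of new copies of $A$ to fold the coloring back into one with fewer critical copies.

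With the Partite Lemma in hand, the construction of $C$ proceeds as follows. I would fix an ambient linearly ordered structure $D$ containing many copies of $B$, view it as a fully partite system $P_0$ by placing each vertex of $D$ in its own part, list all possible pictures of $A$ in $B \subseteq D$ as $A^{(1)}, \ldots, A^{(N)}$, and iteratively build $P_{k+1}$ from $P_k$ by applying the Partite Lemma to the picture $A^{(k+1)}$. A descending induction on $k$ shows that from a monochromatic copy of $B$ in $P_N$ one extracts a monochromatic copy of $B$ in $P_0 = D$. Forgetting the partite structure and extending the partite preorder to any compatible linear order then yields the required set system $C$.

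The step I expect to be the main obstacle is the bookkeeping involved in these amalgamations: one must ensure that edges created across parts during amalgamation are either left unlabeled or carefully controlled, so as not to introduce spurious copies of $A$, and that the final linear order on $C$ is a simultaneous extension of all partite orders of the amalgamated copies. These are the usual technical points of the partite method, and carrying them out honestly, rather than hand-waving over them, is where the real work lies.
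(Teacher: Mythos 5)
This statement is not proved in the paper at all: it is imported verbatim from Ne\v set\v ril--R\"odl \cite{key-NR}, so the only fair comparison is with the method of that reference, and your outline is indeed the partite construction used there. The global scheme you describe (a Partite Lemma for transversal copies, an iterated application of it over an enumeration of the possible positions of $A$, and a final descending induction) is the right one.

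There are, however, two genuine gaps. The serious one is your proof of the Partite Lemma. The induction you propose on the number of transversal copies of $A$ in $P$ does not work as stated: the base case with a single copy is vacuous (any copy of $P$ is trivially monochromatic on its one copy of $A$), not a reduction to the finite Ramsey theorem, and the inductive step of ``amalgamating several disjoint copies of $P$ along the shared partite skeleton and applying Ramsey to fold the coloring back'' is precisely the point where naive amalgamation fails --- a new copy of $P$ produced this way contains copies of $A$ spread across the amalgamated pieces, and their colors are not controlled by the induction hypothesis. The known proofs of the Partite Lemma do not argue this way: they take a suitable power of $P$ (vertices of the $i$-th part are words over the $i$-th part of $P$) so that transversal copies of $A$ correspond to combinatorial lines, and then invoke the Hales--Jewett theorem (or Graham--Rothschild). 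Without that input your sketch has no engine. The second, smaller gap is in the setup of the construction: it is not enough that the ambient structure $D$ ``contain many copies of $B$''; $D$ must be chosen by the ordinary finite Ramsey theorem so that $|V(D)|\rightarrow(|V(B)|)^{|V(A)|}_{r}$ on traces, and $P_{0}$ is not $D$ itself but the $|V(D)|$-partite system made of disjoint copies of $B$, one for each $|V(B)|$-subset of $V(D)$. The descending induction then yields a copy of $P_{0}$ in $P_{N}$ on which the color of a copy of $A$ depends only on its trace, and it is the Ramsey choice of $D$ that converts this into a monochromatic copy of $B$; the Ramsey witness is $C=P_{N}$, not $D$. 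As written, your final step (extracting a monochromatic copy of $B$ ``in $P_{0}=D$'' from one in $P_{N}$) conflates these roles and would not close the argument.
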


For example a linearly ordered tournament can be converted
into a set system as follows: if $(x,y)$ is a directed edge, then
form the hyperedge $\{x,y\}$ and label it with $0$ if $x<y$, $1$
otherwise. Vice-versa, a set system can be converted into a
directed graph by following the reverse procedure. 

By identifying graphs as relational structures with universe $V(G)$
and single relation $E(G)$, it is thus natural to consider
\emph{homogeneous} graphs. To complete our program, we will follow the
classification of homogeneous directed graphs as described in
\cite{key-C}. Concretely, it means that we will construct a precompact
Ramsey expansion for each of the directed graphs in Cherlin's list,
which we repeat here for convenience (more detail about each object
can be found in the corresponding section, or in \cite{key-C},
p.74-75):

\begin{enumerate}

\item[1.] $I_{n}$. 

\item[2.] $C_{3}, \mathbb Q, \Cy, T^{\omega}$. 

\item[3.] $\m T[I_{n}]$ (Section \ref{section:TIn} and \ref{section:TIomega}), $I_{n}[\m T]$ 
(Section \ref{section:InT} and \ref{section:IomegaT}), where $\m T$ is a homogeneous tournament. 

\item[4.] $\hat{\m T}$ for $\m T = I_{1}, C_{3}, \mathbb Q, T^{\omega}$ (Section \ref{section:TIn}). 

\item[5.] Complete $n$-partite (Section \ref{section:complete}). 

\item[6.] Semigeneric (Section \ref{section:semi}).

\item[7.] $\Cz$. 

\item[8.] $\mathcal P$. 

\item[9.] $\mathcal P (3)$ (section \ref{section:p3}).

\item[10.] $\Gamma_{n}$.

\item[11.] $\mathcal T$-generic. 

\end{enumerate}
  
The cases 1, 2, 7, 8, 10, 11 will not be treated here as Ramsey
precompact expansions with the expansion property are already known
for them. Precisely:

\begin{enumerate}

\item[1.] $I_{n}$ denotes the edgeless directed graph on $n$ vertices,
$n\leq\omega$. When $n<\omega$, $I_{n}$ is finite and the relevant
expansion $I_{n} ^{*}$ is obtained by adding $n$ unary relations, one
for each point in $I_{n}$. When $n=\omega$, $I_{\omega} ^{*}$ is
obtained by adding a linear ordering that makes it isomorphic to the
rationals (\cite{key-KPT}, based on the usual finite Ramsey theorem).

\item[2.] These are the homogeneous tournaments. $C_{3}$ denotes the $3$-cycle, 
and the corresponding expansion $C_{3} ^{*}$ is obtained in the same
way as it is obtained for $I_{n}$ by adding three unary
predicates. For convenience, we will also assume that $C_3 ^*$ is
equipped with a linear ordering $<^{C_3 ^*}$. This does not change the
automorphism group of $C_3 ^*$ but will be useful in Section 4.
$\mathbb Q$ denotes the rationals, seen as a directed
graph where $E(x,y)$ iff $x<y$. For this structure, simply take
$\mathbb Q^{*} = \mathbb Q$, and the Ramsey property follows again in
virtue of the usual finite Ramsey Theorem. $\Cy$ denotes the dense
local order; its points are the points lying at a rational angle on
the unit circle, with the edge relation defined by $E(x,y)$ iff the
angle from $x$ to $y$ is in the range $(0,
\pi)$. $\Cy ^{*}$ is constructed by adding two unary relations,
corresponding to the partition into left and right half (note that
there are no antipodal points at $\pi/2$ and $3\pi/2$).
Observe that there is a definable linear
ordering $<^{\Cy ^*}$ in that structure, obtained by reversing edges that
are between elements lying in different parts (see \cite{key-NVT},
Proposition 10, for more details). In what follows, it will be convenient
to include it as a relation of $\Cy^*$.
The underlying Ramsey theorem for this class is proved in \cite{key-LNS}, and the
universal minimal flow is described in \cite{key-NVT}. Finally,
$T^{\omega}$ denotes the generic tournament, and $T^{\omega *}$ is
obtained by adding a generic linear ordering; the corresponding Ramsey
theorem is a consequence of a general result proved independently by
Abramson-Harrington \cite{key-AH} and Ne\v set\v ril-R\"odl
\cite{key-NR1, key-NR2, key-NR}.

\item[7.] $\Cz$ is a variant of $\Cy$, where $E(x,y)$ iff the angle
from $x$ to $y$ is in the range $(0, 2\pi/3)$. $\Cz^{*}$ is
constructed by adding three unary relations, corresponding to the
partition into three arcs of same length and without extremity points
(same references as for $\Cy$). As for $\Cy^*$, there is a definable linear ordering
$<^{\Cz^*}$ in $\Cz^*$ (see \cite{key-NVT}, Proposition 12), which we will
consider as a relation in $\Cz^*$.

\item[8.] $\mathcal P$ denotes the generic partial order. Its
expansion is obtained by considering $\mathcal P ^{*}$, the
Fra\"iss\'e limit of the class of all finite partial orders that are
totally ordered by a linear extension (see \cite{key-PTW}, based on
unpublished results of Ne\v set\v ril-R\"odl).

\item[10.] $\Gamma_{n}$ denotes the generic directed graph where
$I_{n+1}$ does not embed. When $n=1$, this is nothing else than
$T^{\omega}$, for which the relevant expansion was already
described. When $n>1$, $\Gamma_{n} ^{*}$ is also obtained by adding a
generic linear ordering. The underlying Ramsey theorem is covered by
general results of Ne\v set\v ril-R\"odl thanks to the partite
construction: by switching edges and
non-edges, the class becomes the class of $K_n$-free graphs, which is
treated by \cite{key-NR}.

\item[11.] $\mathcal T$-generic, where $\mathcal T$ denotes a set of
finite tournaments, refers to the generic directed graph as the
Fra\"{\i}ss\'e limit of all the finite directed graphs which do not
embed any member of $\mathcal T $. As in case 10, the general results
by Ne\v set\v ril-R\"odl allow to prove that the relevant expansion is
obtained by adding a generic linear ordering.

\end{enumerate}

\section{A Ramsey Lemma for Products}

This section is devoted to an elementary but powerful finite
combinatorial principle.

\begin{defin}
Let $n\in\N^+$ and $0\leq a_{i}\leq b_{i}\in\mathbb{N}$
for $1\leq i\leq n$. If $B_{i}\subseteq\mathbb{N}$ are disjoint sets of size 
$|B_{i}|=b_{i}$ for $1\leq i\leq n$, then define
\[
{B_{1},\ldots,B_{n} \choose a_{1},\ldots,a_{n}}
\]
to consist of all sets of the form $A'=\bigcup_{i=1}^{n}A_{i}$, where
$A_{i}\subseteq B_{i}$ and $|A_{i}|=a_{i}$ for $1\leq i\leq n$.
\end{defin}

Using this notation, we have the following Ramsey lemma for products.

\begin{lem}[\cite{key-GRS}] \label{lem:blob}
Let $r,n\in\N^+$ and let $0\leq a_{i}\leq b_{i}\in\mathbb{N}$
for $1\leq i\leq n$. Then there exist disjoint $C_{i}\subseteq\mathbb{N}$
for $1\leq i\leq n$, such that given any colouring $\chi$ of 
\[
{C_{1},\ldots,C_{n} \choose a_{1},\ldots,a_{n}}
\]
with $r$ colours, there exists $B_{i}\subseteq C_{i}$ and $|B_{i}|=b_{i}$ so that $\chi$
is monochromatic on
\[
{B_{1},\ldots,B_{n} \choose a_{1},\ldots,a_{n}}.
\]
 We symbolize the statement above with
\[
(C_{1},\ldots,C_{n})\rightarrow(b_{1},\ldots,b_{n})_{r}^{(a_{1},\ldots,a_{n})}.
\]
If $b=b_{i}$ for all $i$, we set $N=\max\{|C_{i}|:1\leq i\leq n\}$
and write
\[
N\rightarrow(b)_r^{(a_{1},\ldots,a_{n})}.
\]
\end{lem}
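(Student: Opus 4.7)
The plan is to proceed by induction on $n$, with the classical finite Ramsey theorem supplying the base case. For $n=1$, the statement $N\to(b_1)_r^{a_1}$ is precisely the finite Ramsey theorem, and there is nothing to prove.

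For the inductive step, assume the result for $n-1$. First, invoke the classical Ramsey theorem to fix $C_n\subseteq\mathbb{N}$ large enough that any $r$-coloring of $\binom{C_n}{a_n}$ admits a monochromatic $B_n\subseteq C_n$ with $|B_n|=b_n$. Let $M=r^{\binom{|C_n|}{a_n}}$ be the total number of $r$-colorings of $\binom{C_n}{a_n}$. Applying the inductive hypothesis with $M$ colors, obtain pairwise disjoint $C_1,\ldots,C_{n-1}\subseteq\mathbb{N}$ (which, after relabeling, we may also take disjoint from $C_n$) such that $(C_1,\ldots,C_{n-1})\to(b_1,\ldots,b_{n-1})_M^{(a_1,\ldots,a_{n-1})}$.

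Now given any $r$-coloring $\chi$ of $\binom{C_1,\ldots,C_n}{a_1,\ldots,a_n}$, define an $M$-coloring $\tilde\chi$ of $\binom{C_1,\ldots,C_{n-1}}{a_1,\ldots,a_{n-1}}$ by
\[
\tilde\chi\bigl(A_1\cup\cdots\cup A_{n-1}\bigr) \;=\; \bigl(A_n\mapsto \chi(A_1\cup\cdots\cup A_n)\bigr),
\]
viewing the right-hand side as an element of $[r]^{\binom{C_n}{a_n}}$. By the choice of the $C_i$, find $B_i\subseteq C_i$ with $|B_i|=b_i$ on which $\tilde\chi$ is constant; equivalently, there is a single $r$-coloring $\psi$ of $\binom{C_n}{a_n}$ such that $\chi(A_1\cup\cdots\cup A_n)=\psi(A_n)$ for every $A_i\in\binom{B_i}{a_i}$, $i<n$, and every $A_n\in\binom{C_n}{a_n}$. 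Finally, apply the base-case Ramsey property of $C_n$ to $\psi$ to extract $B_n\subseteq C_n$ of size $b_n$ monochromatic for $\psi$; then $\chi$ is monochromatic on $\binom{B_1,\ldots,B_n}{a_1,\ldots,a_n}$, as required. The symmetric form $N\to(b)_r^{(a_1,\ldots,a_n)}$ follows by setting $N=\max_i|C_i|$ and padding.

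Since this is a standard folklore result (attributed in the text to \cite{key-GRS}), there is no genuine obstacle; the induction is routine. The only subtlety worth flagging is the order of quantification in the inductive step: $C_n$ must be selected \emph{first}, so that the number of ``meta-colors'' $M$ is fixed before the inductive hypothesis is invoked on $C_1,\ldots,C_{n-1}$.
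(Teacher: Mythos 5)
Your induction is correct, and the quantifier order you flag (fixing $C_n$, hence the number $M=r^{\binom{|C_n|}{a_n}}$ of meta-colours, before invoking the inductive hypothesis) is handled properly. The paper gives no proof of this lemma, citing \cite{key-GRS} instead, and your argument is essentially the standard product-Ramsey induction found there, so there is nothing further to reconcile.
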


There is an ordered version of the above lemma.  Consider the
class $\mathcal{OP}_{n}$ of structures of the form $\m A = (A,<^{\m A}, P_{1}^{\m A},\ldots,P_{n}^{\m A})$  where $<^{\m A}$ is a linear ordering on $A$, while $\{P_i\}_{1\leq i \leq n}$ is partition of $A$ into disjoint sets.   It
follows from the result in \cite{key-KPT} (see proof of Theorem 8.4) that
$\mathcal{OP}_{n}$ satisfies the Ramsey property. We will also have to consider sequences of
elements of $\mathcal{OP}_{n}$ and hence require the following Ramsey
result which follows from Theorem 2 for sequences of classes in
\cite{key-S1}.

\begin{lem}\label{lem:blob2}
Let $n,k,r\in\mathbb{N}$. Let $(\mathbf{A}_{1},\ldots,\mathbf{A}_{n})$
and $(\mathbf{B}_{1},\ldots,\mathbf{B}_{n})$ be sequences of elements
in $\mathcal{OP}_{k}$. Then there is a sequence
$(\mathbf{C}_{1},\ldots,\mathbf{C}_{n})$ of elements in
$\mathcal{OP}_{k}$ such that for any colouring $\chi$ of sequences
$(\mathbf{A}_{1}',\ldots,\mathbf{A}_{n}')$ such that
$\mathbf{A}'_{i}\in{\mathbf{C}_{i} \choose \mathbf{A}_{i}}$ with $r$
colours, there is a sequence
$(\mathbf{B}_{1}',\ldots,\mathbf{B}_{n}')$ such that
$\mathbf{B}'_{i}\in{\mathbf{C}_{i} \choose \mathbf{B}_{i}}$ and $\chi$
assumes only one colour on the set of sequences
$(\mathbf{A}{}_{1}'',\ldots,\mathbf{A}_{n}'')$ such that
$\mathbf{A}''_{i}\in{\mathbf{B}_{i}' \choose \mathbf{A}_{i}}$. \\
Furthermore, we can make the $C_i$'s disjoint and consequently extend
the orders to an ordering of $\bigcup_i C_i$, convex with respect to
$C_i$'s. 
\end{lem}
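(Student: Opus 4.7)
The plan is to establish the product Ramsey statement for sequences in $\mathcal{OP}_k$ by induction on the length $n$ of the sequence, using as the base case the Ramsey property of $\mathcal{OP}_k$ noted just before the lemma (\cite{key-KPT}, proof of Theorem 8.4). This is essentially the iterated-Ramsey device behind Theorem 2 of \cite{key-S1}, which we may alternatively invoke directly as a black box; I indicate here how it unfolds in our setting.

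For the induction step from $n$ to $n+1$, the idea is to reduce the product coloring to two single-coordinate problems. Given sequences $(\mathbf{A}_i)_{i=1}^{n+1}$ and $(\mathbf{B}_i)_{i=1}^{n+1}$ in $\mathcal{OP}_k$ and a number of colours $r$, I first apply the single-class Ramsey property of $\mathcal{OP}_k$ to the last coordinate to obtain $\mathbf{C}_{n+1}\in\mathcal{OP}_k$ with $\mathbf{C}_{n+1}\to(\mathbf{B}_{n+1})^{\mathbf{A}_{n+1}}_r$. Writing $m=\bigl|\binom{\mathbf{C}_{n+1}}{\mathbf{A}_{n+1}}\bigr|$, I apply the inductive hypothesis to the prefix sequences $(\mathbf{A}_i)_{i=1}^{n}$ and $(\mathbf{B}_i)_{i=1}^{n}$ with $r^{m}$ colours, obtaining a sequence $(\mathbf{C}_1,\ldots,\mathbf{C}_n)$.

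Given any $r$-colouring $\chi$ of the product $\prod_{i=1}^{n+1}\binom{\mathbf{C}_i}{\mathbf{A}_i}$, define an auxiliary colouring $\tilde\chi$ of $\prod_{i=1}^n\binom{\mathbf{C}_i}{\mathbf{A}_i}$ by recording, for each $n$-tuple on the first $n$ coordinates, the full $r$-colouring that $\chi$ induces on $\binom{\mathbf{C}_{n+1}}{\mathbf{A}_{n+1}}$; this takes at most $r^m$ values. The inductive hypothesis then produces copies $\mathbf{B}_i'\in\binom{\mathbf{C}_i}{\mathbf{B}_i}$ for $i\leq n$ on which $\tilde\chi$ is constant, which means that the value of $\chi$ depends only on the last coordinate once the first $n$ are chosen inside $\prod_{i=1}^n\binom{\mathbf{B}_i'}{\mathbf{A}_i}$. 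Applying $\mathbf{C}_{n+1}\to(\mathbf{B}_{n+1})^{\mathbf{A}_{n+1}}_r$ to this residual single-coordinate colouring yields $\mathbf{B}_{n+1}'\in\binom{\mathbf{C}_{n+1}}{\mathbf{B}_{n+1}}$ on which it is constant, and the sequence $(\mathbf{B}_1',\ldots,\mathbf{B}_{n+1}')$ is as required.

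For the ``furthermore'' clause, I replace the $\mathbf{C}_i$ by isomorphic copies on pairwise disjoint underlying sets (harmless since $\mathcal{OP}_k$ is closed under isomorphism) and extend the given orderings to a single linear order on $\bigcup_i C_i$ by placing the blocks $C_1,\ldots,C_n$ in that order; this is convex with respect to each $C_i$ and does not alter the internal structure of the factors. The only non-trivial step in the whole argument is the inductive bookkeeping above, namely the ``$r^m$-super-colour'' encoding that forces $\chi$ to factor through the last coordinate on a sub-product.
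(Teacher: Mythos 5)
Your proposal is correct, and it is essentially the argument the paper relies on: the paper simply cites Theorem 2 of \cite{key-S1} (for sequences of classes) together with the Ramsey property of $\mathcal{OP}_k$ from \cite{key-KPT}, and the standard proof behind that citation is exactly your induction on $n$ with the $r^{m}$-valued super-colouring forcing $\chi$ to factor through the last coordinate. Indeed, the paper reproduces this very scheme explicitly in its proof of Theorem \ref{thm:finite1}, and your treatment of the ``furthermore'' clause (disjoint copies, concatenated convex order) is the intended one.
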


\section{$\mathbf{T}[I_{n}]$, $\hat{\mathbf{T}}$}
\label{section:TIn}

We are now ready to undertake our program starting with the above
homogeneous directed graphs. We start with a brief description of each of them. 

When $\m T$ is a homogeneous tournament and $n$ is a positive integer,
the structure $\mathbf{T}[I_{n}]$ is defined as the homogeneous
directed graph whose vertex set consists of pairs $(x,i)$ with $x\in
T$ and $i\in [n]$. A pair $((x,i),(y,j))$ is an edge of
$\mathbf{T}[I_{n}]$ if and only if $(x,y)$ is an edge of $\mathbf{T}$.


The structure $\hat{\mathbf{T}}$ is identical to $\mathbf{T}[I_{2}]$ except
that $((y,i),(x,j))$ is an edge of $\hat{\mathbf{T}}$ if and only if
$(x,y)$ is an edge of $\mathbf{T}$ and $i\neq j$.

We now turn to the description of the corresponding expansions $\m T
[I_{n}]^{*}$ and $\hat{\m T} ^{*}$. To construct
the structure $\m T [I_{n}]^{*}$, we start from $\m T^{*}$ as
described at the end of Section \ref{section:homogeneous} and use it
to naturally expand $\m T [I_{n}]$. Precisely, consider the language
$L_{\m T ^{*}}$ of $\m T ^{*}$. It contains the edge symbol $E$ from
$L_{\m T}$ and a distinguished relation symbol $<$, interpreted as a
linear ordering $<^{*}$ in $\m T^{*}$. For  $R$ is a relational symbol
in $L_{\m T ^{*}}\smallsetminus \{E,  <\}$, set $R^{\m T
[I_{n}]^{*}}(\vec u)$ iff $R^{\m T ^{*}}(\vec x)$, where $\vec u = ((x_1,i_1),\ldots,(x_n,i_n))$. As for $<$,
interpret it as the lexicographical ordering of $<^{*}$ and the usual
order on $[n]$. Finally, add to the resulting structure $n$ new unary
predicates $(L_{i})_{i\in [n]}$ defined by $L_{i}(x,j)$ iff $j=i$. The
structure $\hat{\m T} ^{*}$ is constructed similarly. 

It is clear that those expansions are precompact since we added only
finitely many predicates. We now show that their age has the Ramsey
property. In order to do this, it suffices to concentrate on the case
of $\m T [I_{n}]^{*}$, as the proof in the two other cases follows the
same scheme. Let $\mathbf{A}$ be a finite substructure of
$\mathbf{T}[I_{n}]^{*}$. Let $\varphi:A\rightarrow[n]$ be such that
$\varphi(x)=i$ if $L_{i}(x)$. Define the \emph{collapse} of $\m A$,
$\coll(\mathbf{A})$, to be the projection of $\mathbf{A}$ to
$\mathbf{T}$, i.e., $x$ is in the universe of $\coll(\m A)$ if and
only if $(x,i)\in A$ for some $i$. Construct the sequence
\[
\recipe(\mathbf{A})=\left(\{n_{1}^{1},\ldots,n_{k_{1}}^{1}\},\ldots,\{n_{1}^{l},\ldots,n_{k_{l}}^{l}\}\right)
\]
as follows: Enumerate $\coll(\mathbf{A})=x_{1}<\ldots<x_{l}$ consistently
with the ordering on \textbf{$\mathbf{T}^{*}$}.
Set $A_{x}=A\cap {x} \times [n]$, and let $\{n_{1}^{j},\ldots,n_{k_j}^{j}\}=\varphi''A_{x_{j}}$.

Conversely, given a sequence 
\[
\sigma=\left(\{n_{1}^{1},\ldots,n_{k_{1}}^{1}\},\ldots,\{n_{1}^{l},\ldots,n_{k_{l}}^{l}\}\right)
\]
where $n_{j}^{i} \in  [n]$  and given a finite substructure of $\mathbf{A}$
of $\mathbf{T}$ of size $l$, define $\lift(\mathbf{A},\sigma)$  as follows:
Enumerate $\mathbf{A}=x_{1}<\ldots<x_{l}$ consistently with the ordering
on \textbf{$\mathbf{T}^{*}$}. The universe of $\lift(\mathbf{A},\sigma)$
consists of all pairs $(x_{i},n_{j}^{i})$ with $1\leq j\leq k_{i}$.

Thus we have the following immediate Lemma.

\begin{lem}
\label{lem:collapse}Suppose $\mathbf{A}\cong\mathbf{A}'$ are finite
substructures of $\mathbf{T}[I_{n}]^{*}$.
Then
\[
\lift(\coll(\mathbf{A}'),\recipe(\mathbf{A}))=\mathbf{A}'.
\]
\end{lem}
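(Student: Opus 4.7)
The plan is to fix an isomorphism $f\colon \mathbf{A}\to \mathbf{A}'$ and descend it to a $\mathbf{T}^*$-isomorphism $\bar f\colon \coll(\mathbf{A})\to \coll(\mathbf{A}')$; once this is in hand, the conclusion reduces to a bookkeeping comparison of universes. First I would note that since each unary predicate $L_i$ is part of the signature of $\mathbf{T}[I_n]^*$, the map $f$ preserves second coordinates, so $f(x,i)=(g(x,i),i)$ for some function $g$ defined on the projection of $A$ to $T$.

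The main obstacle is to show that $g(x,i)$ depends only on $x$, and that the resulting induced map is injective. If $(x,i_1),(x,i_2)\in A$ with $i_1\neq i_2$, then by definition of $\mathbf{T}[I_n]$ these two copies of $x$ carry no edge (since $E^{\mathbf T}$ is irreflexive), and hence neither do their images $(g(x,i_1),i_1)$ and $(g(x,i_2),i_2)$; but $\mathbf T$ is a tournament, so distinct points are always joined by an edge, forcing $g(x,i_1)=g(x,i_2)$. Setting $\bar f(x):=g(x,i)$ for any valid $i$ yields a well-defined map $\bar f\colon\coll(\mathbf{A})\to\coll(\mathbf{A}')$. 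Injectivity follows from the same idea: if $\bar f(x)=\bar f(y)=z$ for distinct $x,y\in\coll(\mathbf{A})$, then the tournament edge between $x$ and $y$ in $\mathbf T$ would lift to an edge inside $A$ whose $f$-image is an edge between two copies of $z$ in $A'$, again contradicting irreflexivity.

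Once this is secured, I would verify that $\bar f$ is a $\mathbf{T}^*$-isomorphism: it preserves $<^*$ because the lexicographic order on points $(x,i),(y,j)$ with $x\ne y$ reduces to $x<^* y$, and it preserves $E$ together with each remaining relation $R\in L_{\mathbf T^*}\setminus\{E,<\}$ by the very way the expansion was defined. Enumerating $\coll(\mathbf{A})=x_1<^*\cdots<^* x_l$ and $\coll(\mathbf{A}')=y_1<^*\cdots<^* y_l$, we therefore get $y_j=\bar f(x_j)$, and applying $f$ in the second coordinate identifies $\{i:(x_j,i)\in A\}$ with $\{i:(y_j,i)\in A'\}$. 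Consequently the universe of $\lift(\coll(\mathbf{A}'),\recipe(\mathbf{A}))$ equals $\bigcup_{j=1}^l\{(y_j,i):(x_j,i)\in A\}=f''A=A'$, and since both structures inherit their full $\mathbf{T}[I_n]^*$-structure from the ambient directed graph on the same underlying set, they must coincide.
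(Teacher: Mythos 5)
Your proof is correct: the paper states this lemma without argument (``Thus we have the following immediate Lemma''), and what you have written is precisely the natural verification it is relying on — an isomorphism preserves the predicates $L_i$ and the convex lexicographic order, hence fixes second coordinates and descends to an order-preserving $\mathbf{T}^*$-isomorphism of the collapses, after which the universes match up. Your use of the tournament property (no edge between two points forces equal first coordinates) to show the induced map $\bar f$ is well defined and injective is exactly the right observation, so nothing is missing.
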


\begin{thm}
\label{thm:TIn-and-T-hat} The class $\age(\mathbf{T}[I_n]^{*})$ satisfies the Ramsey
Property.
\end{thm}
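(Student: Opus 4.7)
The plan is to reduce Ramsey for $\age(\mathbf{T}[I_n]^*)$ to the already-known Ramsey property of $\age(\mathbf{T}^*)$ (for the homogeneous tournaments $\mathbf{T} \in \{I_1, C_3, \mathbb{Q}, T^\omega\}$ from Section~\ref{section:homogeneous}). The essential structural observation is that because the signature of $\mathbf{T}[I_n]^*$ includes the unary predicates $L_0, \ldots, L_{n-1}$, every isomorphism between substructures of $\mathbf{T}[I_n]^*$ preserves second coordinates. Consequently $\recipe(\mathbf{A})$ is an isomorphism invariant of $\mathbf{A}$, and by Lemma~\ref{lem:collapse} every copy of $\mathbf{A}$ inside any ambient $\m C \in \age(\mathbf{T}[I_n]^*)$ has the form $\lift(\mathbf{A}'_0, \recipe(\mathbf{A}))$ for a unique copy $\mathbf{A}'_0 \cong \coll(\mathbf{A})$ in $\coll(\m C)$.

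Given $\mathbf{A}, \mathbf{B} \in \age(\mathbf{T}[I_n]^*)$ and $r \in \N$, I would set $\mathbf{A}_0 = \coll(\mathbf{A})$ and $\mathbf{B}_0 = \coll(\mathbf{B})$, both of which live in $\age(\mathbf{T}^*)$, and apply the Ramsey property of $\age(\mathbf{T}^*)$ to obtain $\mathbf{C}_0 \in \age(\mathbf{T}^*)$ with $\mathbf{C}_0 \arrows{(\mathbf{B}_0)}{\mathbf{A}_0}{r}$. I would then take $\mathbf{C} \in \age(\mathbf{T}[I_n]^*)$ to be the \emph{full lift} of $\mathbf{C}_0$: the substructure of $\mathbf{T}[I_n]^*$ with universe $C_0 \times [n]$ (every column complete). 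Given any $r$-coloring $\chi$ of $\binom{\mathbf{C}}{\mathbf{A}}$, the rule $\chi'(\mathbf{A}'_0) := \chi(\lift(\mathbf{A}'_0, \recipe(\mathbf{A})))$ defines an $r$-coloring of $\binom{\mathbf{C}_0}{\mathbf{A}_0}$, well-posed because $\mathbf{C}$ has full columns. The choice of $\mathbf{C}_0$ then yields a copy $\mathbf{B}'_0 \subseteq \mathbf{C}_0$ of $\mathbf{B}_0$ on which $\chi'$ is constant. Setting $\mathbf{B}' := \lift(\mathbf{B}'_0, \recipe(\mathbf{B}))$, one gets $\mathbf{B}' \cong \mathbf{B}$ since lift commutes with isomorphism of collapses. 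By the recipe-invariance observation, every copy of $\mathbf{A}$ in $\mathbf{B}'$ is the lift of its collapse (which is a copy of $\mathbf{A}_0$ inside $\mathbf{B}'_0$), so receives the color assigned by $\chi'$; hence $\chi$ is constant on $\binom{\mathbf{B}'}{\mathbf{A}}$.

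The conceptual core is the recipe-invariance observation: with it in hand, neither of the product Ramsey Lemmas \ref{lem:blob} or \ref{lem:blob2} is required, since the labels above each vertex of an $\mathbf{A}$-copy are forced by the $L_i$'s. I expect the only subtle point worth spelling out to be the verification that $\lift(\mathbf{B}'_0, \recipe(\mathbf{B})) \cong \mathbf{B}$ in the full expanded language, i.e.\ that collapse/lift respects the edge relation, the lexicographic order, and the pulled-back relations from $\mathbf{T}^*$; this is essentially Lemma~\ref{lem:collapse} applied to $\mathbf{B}$. The case of $\hat{\mathbf{T}}^*$ runs identically: the edge-swap rule in the definition of $\hat{\mathbf{T}}$ changes only how lifts realize edges, not the collapse/lift bookkeeping, so the same reduction to Ramsey for $\age(\mathbf{T}^*)$ applies verbatim.
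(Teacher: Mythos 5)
Your proposal is correct and follows essentially the same route as the paper: collapse $\mathbf{A}$ and $\mathbf{B}$ to $\age(\mathbf{T}^*)$, apply the Ramsey property there to get $\mathbf{C}_0$, take the full lift $C_0 \times [n]$ as $\mathbf{C}$, transfer the coloring via $\lift(\cdot,\recipe(\mathbf{A}))$, and use the fact that (thanks to the predicates $L_i$) every copy of $\mathbf{A}$ equals the lift of its collapse with $\recipe(\mathbf{A})$, which is exactly Lemma~\ref{lem:collapse}. Your observation that Lemmas~\ref{lem:blob} and \ref{lem:blob2} are not needed here also matches the paper, which reserves them for the $\mathbf{T}[I_\omega]$ case.
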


\begin{proof}
Let $\mathbf{A}$, $\mathbf{B}$ be finite substructures of
$\mathbf{T}[I_{n}]^{*}$ and $r\in\mathbb{N}$. We need to produce a
finite substructure $\mathbf{C}\subseteq\mathbf{T}[I_{n}]^{*}$ such
that
\[
\mathbf{C}\rightarrow(\mathbf{B})_{r}^{\mathbf{A}}.
\]

Because $\age(\mathbf{T}^{*})$ satisfies the Ramsey Property, there exists
a finite substructure $\mathbf{C} _{0}\subseteq\mathbf{T}^{*}$ such that 
\[
\mathbf{C}_{0}\rightarrow(\coll(\mathbf{B}))_{r}^{\coll(\mathbf{A})}.
\]
Let $\m C$ be the substructure of $\mathbf{T}[I_{n}]^{*}$ obtained from $\m C_{0}$ following the same procedure that turned $\m T^{*}$ into $\mathbf{T}[I_{n}]^{*}$. We claim that $\m C$ is as required. Let $r\geq2$ and
\[
\chi:{\mathbf{C} \choose \mathbf{A}}\rightarrow [r]
\]
be a colouring. Define $\bar{\chi}:{\mathbf{C} _{0} \choose
\coll(\mathbf{A})}\rightarrow r$ by
$\bar{\chi}(\mathbf{A}')=\chi(\lift(\mathbf{A}',\recipe(\mathbf{A}))$
for any $\mathbf{A}'\in{\mathbf{C}_{0} \choose \coll(\mathbf{A})}$. Then there
exists $\mathbf{B}'\in{\mathbf{C}_{0} \choose \coll(\mathbf{B})}$ such
that $\bar{\chi}$ is constant on ${\mathbf{B}' \choose
\coll(\mathbf{A})}$.  Note that
$\mathbf{B}''=\lift(\mathbf{B}',\recipe(\mathbf{B}))\in{\mathbf{C} \choose
\mathbf{B}}$.

\noindent Now for $\mathbf{A}',\mathbf{A}''\in{\mathbf{B}'' \choose \mathbf{A}}$, we obtain by Lemma \ref{lem:collapse}:
\begin{align*}
\chi(\mathbf{A}') & =\chi(\lift(\coll(\mathbf{A}'),\recipe(\mathbf{A}))\\
 & =\bar{\chi}(\coll(\mathbf{A}'))\\
 & =\bar{\chi}(\coll(\mathbf{A}''))\\
 & =\chi(\lift(\coll(\mathbf{A}''),\recipe(\mathbf{A}))\\
 & =\chi(\mathbf{A}'').
\end{align*}
\end{proof}

Similarly we can step up the expansion property, and again we provide
the proof only for the case of  $\mathbf{T}[I_{n}]$ as the others are similar. 

\begin{thm}
\label{thm:exp1}If $\age(\mathbf{T}^{*})$ satisfies the expansion
property with respect to $\age(\mathbf{T})$, then
$\age(\mathbf{T}[I_{n}]^{*})$ satisfies the expansion property with
respect to $\age(\mathbf{T}[I_{n}])$ .\end{thm}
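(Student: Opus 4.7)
The plan mimics the preceding Ramsey argument through the same collapse-and-lift machinery. Given $\mathbf{A}\in\age(\mathbf{T}[I_{n}])$, set $\mathbf{A}_{0}:=\coll(\mathbf{A})\in\age(\mathbf{T})$. Apply the hypothesized expansion property of $\age(\mathbf{T}^{*})$ relative to $\age(\mathbf{T})$ to $\mathbf{A}_{0}$ to produce $\mathbf{B}_{0}\in\age(\mathbf{T})$ such that every pair of expansions $\mathbf{A}_{0}^{*},\mathbf{B}_{0}^{*}\in\age(\mathbf{T}^{*})$ of $\mathbf{A}_{0}$ and $\mathbf{B}_{0}$ satisfies $\mathbf{A}_{0}^{*}\leq\mathbf{B}_{0}^{*}$. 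Take as candidate witness $\mathbf{B}\in\age(\mathbf{T}[I_{n}])$ the substructure of $\mathbf{T}[I_{n}]$ consisting of all pairs $(y,i)$ with $y\in B_{0}$ and $i\in[n]$, i.e.\ the analogue of $\mathbf{T}[I_{n}]$ built on top of $\mathbf{B}_{0}$ with full multiplicity $n$ at each vertex.

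To check that this $\mathbf{B}$ works, fix any expansions $\mathbf{A}^{*}$ of $\mathbf{A}$ and $\mathbf{B}^{*}$ of $\mathbf{B}$ in $\age(\mathbf{T}[I_{n}]^{*})$. Their collapses inherit from the lexicographic ordering and the additional relations $R$ of $\mathbf{A}^{*}$ and $\mathbf{B}^{*}$ a natural structure in $\age(\mathbf{T}^{*})$, so by the choice of $\mathbf{B}_{0}$ there is an embedding $\iota:\coll(\mathbf{A}^{*})\hookrightarrow\coll(\mathbf{B}^{*})$. Lift $\iota$ to a map $\hat{\iota}:\mathbf{A}^{*}\to\mathbf{B}^{*}$ by sending each vertex $v\in A$, viewed as a twin of $x_{v}\in\coll(\mathbf{A})$ carrying label $\ell_{v}\in[n]$ in $\mathbf{A}^{*}$, to the unique vertex of $\mathbf{B}^{*}$ lying above $\iota(x_{v})$ that carries label $\ell_{v}$.

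The choice of $\mathbf{B}$ with full multiplicity $n$ is precisely what makes $\hat\iota$ well defined: in any expansion $\mathbf{B}^{*}$, each fiber over a collapse vertex has size $n$ and hence realizes all $n$ labels, so the intended target of $\hat\iota(v)$ always exists, whichever label $\ell_{v}$ happened to appear in $\mathbf{A}^{*}$. The remaining verifications are routine: injectivity of $\hat\iota$ follows from injectivity of $\iota$ together with the distinctness of labels inside a twin class; the edge relation and the $\mathbf{T}^{*}$-lifted relations $R$ are preserved because they depend only on the collapse coordinate and $\iota$ already preserves them; the predicates $L_{i}$ are preserved by construction; and the lexicographic order is preserved because it is determined by $<^{\mathbf{T}^{*}}$ on collapses and the usual order on $[n]$, both of which $\hat\iota$ respects. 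I do not foresee any serious obstacle: the argument is a direct structural transfer of the expansion property from $\mathbf{T}^{*}$ to $\mathbf{T}[I_{n}]^{*}$, and the only design decision is the use of full multiplicity $n$ in $\mathbf{B}$, which is exactly what ensures that every labeling requirement coming from an arbitrary $\mathbf{A}^{*}$ can be fulfilled inside $\mathbf{B}^{*}$.
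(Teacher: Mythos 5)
Your proposal is correct and follows essentially the same route as the paper: collapse $\mathbf{A}$ to $\age(\mathbf{T})$, apply the expansion property of $\age(\mathbf{T}^{*})$ to get $\bar{\mathbf{B}}$, take the full-multiplicity witness $\bar{\mathbf{B}}[I_{n}]$, and lift the resulting embedding of collapses fiberwise using the labels. The paper phrases the final step through its $\lift$/$\recipe$ formalism rather than your explicit label-matching map $\hat\iota$, but the content is the same, including the key observation that full fibers in $\mathbf{B}^{*}$ realize all $n$ labels.
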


\begin{proof}
Consider $\mathbf{A}\in\age(\mathbf{T}[I_{n}])$. Then
$\bar{\mathbf{A}}=\coll(\mathbf{A}) \in \age(\mathbf{T})$ and by
assumption we can find a structure
$\bar{\mathbf{B}}\in\age(\mathbf{T})$ such that any $\mathbf{T}^*$
expansion of $\bar{\mathbf{A}}$ embeds in any $\mathbf{T}^*$ expansion
of $\bar{\mathbf{B}}$. We claim that $\bar{\mathbf{B}}[I_{n}]
\in\age(\mathbf{T}[I_{n}])$ is is the required structure.

This is because any $\mathbf{T}[I_{n}]^{*}$ expansion $\mathbf{A}^{*}$ of
$\mathbf{A}$ is of the form $\lift(\bar{\mathbf{A}}^{*},\sigma)$, for
some recipe $\sigma$ and some $\mathbf{T}^*$ expansion $\bar{\mathbf{A}}^{*}$ of
$\bar{\mathbf{A}}$. Similarly
 any $\mathbf{T}[I_{n}]^{*}$ expansion $\mathbf{B}^{*}$ of
$\bar{\mathbf{B}}[I_{n}]$ is of the form $\lift(\bar{\mathbf{B}}^{*},\alpha)$, for
some recipe $\alpha$ and some $\mathbf{T}^*$ expansion $\bar{\mathbf{B}}^{*}$ of
$\bar{\mathbf{B}}$.
Since $\bar{\mathbf{A}}^{*}$ embeds in $\bar{\mathbf{B}}^{*}$, it is easily seen that 
$\mathbf{A}^{*}$ embeds into $\mathbf{B}^{*}$.
\end{proof}

We state the expansion property for the other cases for reference.

\begin{thm}
If $\age(\mathbf{T}^{*})$ satisfies the expansion property with respect
to $\age(\mathbf{T})$, then $\age(\hat{\mathbf{T}}^*)$ satisfies the
expansion property with respect to $\age(\hat{\mathbf{T}})$.\end{thm}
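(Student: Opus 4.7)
The plan is to mirror the proof of Theorem \ref{thm:exp1} for $\hat{\mathbf{T}}$ by first setting up analogous $\coll$ and $\lift$ machinery. For a finite $\mathbf{A} \subseteq \hat{\mathbf{T}}^*$, let $\coll(\mathbf{A})$ be the substructure of $\mathbf{T}^*$ induced on the set $\{x : (x,i) \in A \text{ for some } i\in\{0,1\}\}$ of first coordinates, and let $\recipe(\mathbf{A})$ record, for each element of $\coll(\mathbf{A})$ enumerated in the $\mathbf{T}^*$-order, the nonempty subset of $\{0,1\}$ of indices it carries in $\mathbf{A}$. Conversely, given a substructure $\bar{\mathbf{A}}$ of $\mathbf{T}^*$ of matching size and a compatible recipe $\sigma$, define $\lift(\bar{\mathbf{A}}, \sigma)$ to be the induced substructure of $\hat{\mathbf{T}}^*$ on the corresponding pairs.

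The first step is to verify the analog of Lemma \ref{lem:collapse}: if $\mathbf{A} \cong \mathbf{A}'$ in $\hat{\mathbf{T}}^*$, then $\lift(\coll(\mathbf{A}'), \recipe(\mathbf{A})) = \mathbf{A}'$. This holds because the isomorphism type of a finite substructure of $\hat{\mathbf{T}}^*$ is entirely determined by the base $\mathbf{T}^*$-structure on its first coordinates together with the index pattern; the edge convention of $\hat{\mathbf{T}}$ (reversing direction across distinct indices) is automatic once both pieces of data are specified.

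Next, given $\mathbf{A} \in \age(\hat{\mathbf{T}})$, set $\bar{\mathbf{A}} := \coll(\mathbf{A}) \in \age(\mathbf{T})$, and apply the hypothesized expansion property for $\age(\mathbf{T}^*)$ to obtain $\bar{\mathbf{B}} \in \age(\mathbf{T})$ such that every $\mathbf{T}^*$-expansion of $\bar{\mathbf{A}}$ embeds into every $\mathbf{T}^*$-expansion of $\bar{\mathbf{B}}$. I claim that $\mathbf{B}$, defined as the substructure of $\hat{\mathbf{T}}$ on $\bar{\mathbf{B}} \times \{0,1\}$ (the analog of $\bar{\mathbf{B}}[I_n]$), witnesses the expansion property. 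Any $\hat{\mathbf{T}}^*$-expansion $\mathbf{A}^*$ of $\mathbf{A}$ has the form $\lift(\bar{\mathbf{A}}^*, \recipe(\mathbf{A}))$ for some $\mathbf{T}^*$-expansion $\bar{\mathbf{A}}^*$ of $\bar{\mathbf{A}}$, and similarly every $\hat{\mathbf{T}}^*$-expansion $\mathbf{B}^*$ of $\mathbf{B}$ has the form $\lift(\bar{\mathbf{B}}^*, \recipe(\mathbf{B}))$. Given an embedding $\phi : \bar{\mathbf{A}}^* \to \bar{\mathbf{B}}^*$, the induced map $(x,i) \mapsto (\phi(x), i)$ is a well-defined embedding $\mathbf{A}^* \to \mathbf{B}^*$, since $\recipe(\mathbf{B})$ assigns the full index set $\{0,1\}$ above every vertex of $\bar{\mathbf{B}}$.

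The main obstacle is verifying that the induced map preserves all the structure of $\hat{\mathbf{T}}^*$: the edge relation of $\hat{\mathbf{T}}$, the lexicographic ordering inherited from $<^{\mathbf{T}^*}$ and the natural order on $\{0,1\}$, the unary predicates $L_0, L_1$, and the relations lifted from $\mathbf{T}^*$. Each is a routine check, since edges are determined by base edges and index parity, the order is lexicographic, the $L_i$-values are preserved by construction, and the lifted relations depend only on first coordinates. The only substantive content of the argument therefore lies in the base-level expansion property supplied by hypothesis.
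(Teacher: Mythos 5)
There is a genuine gap, and it lies exactly at the point where $\hat{\mathbf{T}}$ differs from $\mathbf{T}[I_2]$. For $\mathbf{T}[I_n]$, the reduct $\mathbf{A}$ determines its collapse: non-adjacency is an equivalence relation and the quotient is canonically a tournament, so every expansion of $\mathbf{A}$ really is a lift of a $\mathbf{T}^*$-expansion of $\coll(\mathbf{A})$. For $\hat{\mathbf{T}}$ this is false: the reduct only determines the base tournament up to \emph{switching}. Indeed, if $\mathbf{Z}$ is a finite tournament, $S\subseteq Z$, and $\mathbf{Z}_S$ is obtained from $\mathbf{Z}$ by reversing all edges between $S$ and its complement, then the map $(z,i)\mapsto (z,i)$ for $z\notin S$ and $(z,i)\mapsto(z,1-i)$ for $z\in S$ is an isomorphism $\hat{\mathbf{Z}}\cong\hat{\mathbf{Z}_S}$ (a direct check against the edge rule ``same index: follow $\mathbf{Z}$; different indices: reverse''). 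Consequently, for $\mathbf{T}=T^{\omega}$ (one of the cases the theorem must cover), the full-column substructure over a copy of $C_3$ and the one over a copy of the transitive $3$-tournament are isomorphic reducts, so the single structure $\mathbf{A}\cong\hat{C_3}\cong\hat{L_3}$ admits expansions in $\age(\hat{\mathbf{T}}^*)$ whose collapses (which \emph{are} isomorphism invariants once the $L_i$'s and the order are present) are an ordered $C_3$ in one case and an ordered transitive tournament in the other. Your claim that ``any $\hat{\mathbf{T}}^*$-expansion of $\mathbf{A}$ has the form $\lift(\bar{\mathbf{A}}^*,\recipe(\mathbf{A}))$ for some $\mathbf{T}^*$-expansion $\bar{\mathbf{A}}^*$ of $\bar{\mathbf{A}}$'' is therefore wrong, and the same problem hits the other side of your argument: expansions of $\mathbf{B}=\hat{\bar{\mathbf{B}}}$ can sit over switching variants of $\bar{\mathbf{B}}$, to which the hypothesis (expansion property applied to $\bar{\mathbf{A}}$ and witness $\bar{\mathbf{B}}$) was never applied.

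Since an embedding of expanded structures induces an embedding of their collapses (the $L_i$'s track indices, and the lexicographic order restricted to distinct columns recovers the base order), your $\mathbf{B}$ works only if every $\mathbf{T}^*$-expansion of every realizable switching variant of $\bar{\mathbf{B}}$ contains every $\mathbf{T}^*$-expansion of every realizable switching variant of $\bar{\mathbf{A}}$ --- and nothing in your construction arranges this. A correct argument must deal with the switching class: for instance, first replace $\bar{\mathbf{A}}$ by a tournament in $\age(\mathbf{T})$ into which all switching variants of $\bar{\mathbf{A}}$ lying in $\age(\mathbf{T})$ embed (there are finitely many, so joint embedding suffices), and then choose the base of $\mathbf{B}$ so that the required containment holds for \emph{all} expansions of \emph{all} of its realizable switchings, not just for the expansions of one fixed base. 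That extra step (absent for $\mathbf{T}[I_n]$, where no switching phenomenon exists) is the substantive content of the $\hat{\mathbf{T}}$ case, and it is precisely what your ``routine check'' paragraph passes over. Note that for $\mathbf{T}=I_1,\ C_3,\ \mathbb{Q}$ the collapse of a substructure is determined up to isomorphism by its size, so your argument happens to go through there; it is the generic tournament $T^{\omega}$ that breaks it.
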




\section{$\mathbf{T}[I_{\omega}]$}

\label{section:TIomega}

In this section, we treat the case of $\mathbf{T}[I_{\omega}]$
separately; the reason is that the unary predicable $L_i$ are not needed
here so extra care must be taken. We first deal with the case when $\m
T$ is an infinite homogeneous tournament, and the
structure $\mathbf{T}[I_{\omega}]$ is defined in the same way as
$\mathbf{T}[I_{n}]$ for $n$ finite: it is the homogeneous directed
graph whose vertex set consists of pairs $(x,i)$ with $x\in T$ and
$i\in \N$. A pair $((x,i),(y,j))$ is an edge of
$\mathbf{T}[I_{\omega}]$ if and only if $(x,y)$ is an edge of
$\mathbf{T}$.

The structure $\mathbf{T}[I_{\omega}]^{*}$ is also defined in the same
way as $\mathbf{T}[I_{n}]^{*}$ was from $\m T ^{*}$, except that the
linear ordering is the lexicographical product of the ordering $<^{*}$
of $\m T^{*}$ with an ordering $\prec$ on $\N$ that makes $(\N,
\prec)$ isomorphic to $\mathbb Q$, and that as mentioned  no unary predicate $L_{i}$
is added. Note that starting from a finite substructure $\bar{\m A}$
of $\m T ^{*}$ and from a positive integer $n$, the same procedure
yields a finite substructure $\bar{\m A}[I_{n}]^{*}$ of
$\mathbf{T}[I_{\omega}]^{*}$.

Suppose now $\mathbf{A}$ is a finite substructure of
$\mathbf{T}[I_{\omega}]^{*}$.  Define as before the collapse
$\coll(\mathbf{A}) \subseteq \mathbf{T}$ as the projection on its
first coordinates. Enumerating $\coll(\mathbf{A})=x_1<\ldots<x_k$
using the ordering on $\m T^{*}$, define the \emph{signature} of
$\mathbf{A}$ to be $\sigma = (a_1, \ldots, a_k)$ where 
$a_i= | A \cap \{(x_i,j): j \in \N \} |$.

We have the following:

\begin{lem}
\label{lem:infinite1}Let $\mathbf{A}$ be a finite
substructure of $\mathbf{T}[I_{\omega}]^{*}$ and $\mathbf{\bar{A}} =
\coll(\mathbf{A})$ (so that $\mathbf{\bar A} \subseteq
\mathbf{T}^{*}$). Then for any $r$ and any $n$, there is an
$N\in\mathbb{N}$ such that for any colouring
\[
\chi:{\mathbf{\bar{A}}[I_{N}]^{*} \choose \mathbf{A}}\rightarrow [r]
\]
 there exists $\mathbf{A}'\in{\mathbf{\bar{A}}[I_{N}]^{*} \choose \mathbf{\bar{A}}[I_{n}]^{*}}$
such that $\chi$ is monochromatic on ${\mathbf{A}' \choose \mathbf{A}}$.
\end{lem}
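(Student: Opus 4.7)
The strategy is to recast the problem as a direct application of the product Ramsey principle, Lemma \ref{lem:blob}. The crucial observation is that $\bar{\mathbf{A}}$, being a finite substructure of $\mathbf{T}^{*}$, inherits a linear order from $<^{*}$ and is therefore rigid: its only self-embedding is the identity. As a consequence, any copy $\mathbf{A}'$ of $\mathbf{A}$ inside $\bar{\mathbf{A}}[I_N]^{*}$ must satisfy $\coll(\mathbf{A}') = \bar{\mathbf{A}}$.

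Enumerate $\bar{\mathbf{A}} = \{x_1 < \cdots < x_k\}$ in $<^{*}$-order, let $\sigma = (a_1, \ldots, a_k)$ be the signature of $\mathbf{A}$, and write $F_i = \{x_i\} \times [N]$ for the $i$-th fibre of $\bar{\mathbf{A}}[I_N]^{*}$. I claim that the copies of $\mathbf{A}$ in $\bar{\mathbf{A}}[I_N]^{*}$ are in bijection with tuples $(A_1, \ldots, A_k)$ such that $A_i \subseteq F_i$ and $|A_i| = a_i$, via the map sending such a tuple to the substructure induced on $\bigcup_{i=1}^{k} A_i$. Indeed, this substructure is isomorphic to $\mathbf{A}$: between-fibre edges are dictated by the $\mathbf{T}^{*}$-structure on $\bar{\mathbf{A}}$, each fibre restricts to an edgeless $\prec$-ordered set of the correct cardinality $a_i$ (unique up to isomorphism, since any two finite linear orders of the same size are isomorphic), and the lexicographic order reproduces $<^{\mathbf{A}}$. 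The same description also yields a bijection between copies of $\bar{\mathbf{A}}[I_n]^{*}$ in $\bar{\mathbf{A}}[I_N]^{*}$ and tuples $(B_1, \ldots, B_k)$ with $B_i \subseteq F_i$ and $|B_i| = n$.

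With this dictionary in hand, the rest is immediate. Apply Lemma \ref{lem:blob} with $k$ blocks, block sizes $a_i$, common target size $n$, and $r$ colours, to obtain an $N$ witnessing $N \rightarrow (n)_r^{(a_1, \ldots, a_k)}$. Given any colouring $\chi$ of $\binom{\bar{\mathbf{A}}[I_N]^{*}}{\mathbf{A}}$, pull it back along the above bijection to a colouring $\tilde\chi$ of $\binom{F_1, \ldots, F_k}{a_1, \ldots, a_k}$; Lemma \ref{lem:blob} then yields $B_i \subseteq F_i$ with $|B_i| = n$ on which $\tilde\chi$ is constant, and $\bigcup_i B_i$ supports a monochromatic copy of $\bar{\mathbf{A}}[I_n]^{*}$. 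The only step that is not pure bookkeeping is the bijective correspondence used above, which rests entirely on the rigidity of the ordered set $\bar{\mathbf{A}}$; everything else is a transcription of Lemma \ref{lem:blob}.
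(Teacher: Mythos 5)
Your proof is correct and follows essentially the same route as the paper: choose $N$ with $N\rightarrow(n)_{r}^{(a_{1},\ldots,a_{k})}$ via Lemma \ref{lem:blob} and transfer the colouring through the one-to-one correspondence between copies of $\mathbf{A}$ in $\bar{\mathbf{A}}[I_{N}]^{*}$ and elements of ${N,\ldots,N \choose a_{1},\ldots,a_{k}}$. The only difference is that you spell out (via rigidity of the linearly ordered $\bar{\mathbf{A}}$ and the fact that fibres are the non-edge classes) why that correspondence is a bijection, which the paper leaves implicit.
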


\begin{proof}
Let $\sigma=\left(a_1,\ldots,a_k \right)$ be the
signature of $\mathbf{A}$.  By Lemma \ref{lem:blob}, there exists $N$ such that
\[
N\rightarrow(n)_{r}^{(a_{1},\ldots,a_{k})}.
\]

The result follows from the fact that there is a one-to-one
relationship between ${\mathbf{\bar{A}}[I_{N}]^{*} \choose
\mathbf{A}}$ and ${N,\ldots,N \choose a_1,\ldots,a_k}$.
\end{proof}

Using the preceding Lemma, we can thus guarantee that the colour of
copies of $\mathbf{A}$ depends only on their collapse.

\begin{cor}
\label{cor:infinite1}Let $\mathbf{A}$ be a finite
substructure of $\mathbf{T}[I_{\omega}]^{*}$, and $\mathbf{\bar{C}}$
be a finite substructure of $\mathbf{T}^{*}$.  For any $r$ and any
$n\in\N$, there is an $N\in\mathbb{N}$ such that for any colouring
\[
\chi:{\mathbf{\bar{C}}[I_{N}]^{*} \choose \mathbf{A}}\rightarrow r
\]
there exists $\mathbf{C}'\in{\mathbf{\bar{C}}[I_{N}]^{*} \choose
\mathbf{\bar{C}}[I_{n}]^*}$ such that for any
$\mathbf{\bar{A}}\in
{\mathbf{\bar{C}} \choose \coll(\mathbf{A})}$, $\chi$ is monochromatic on 
${\mathbf{C'}\cap \mathbf{\bar{A}}[I_N]^*  \choose \mathbf{A}}$.
\end{cor}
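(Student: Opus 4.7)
The plan is to enumerate the finitely many copies $\bar{\mathbf A}_{1},\ldots,\bar{\mathbf A}_{m}$ of $\coll(\mathbf A)$ inside $\bar{\mathbf C}$ and to apply the product Ramsey principle (Lemma \ref{lem:blob}) once, with $r^{m}$ colours, to handle all of them simultaneously. I will restrict attention to copies $\mathbf C'$ of $\bar{\mathbf C}[I_{n}]^{*}$ inside $\bar{\mathbf C}[I_{N}]^{*}$ of the form $\bar{\mathbf C}\times S$, where $S\subseteq [N]$ has size $n$. Since $\prec$ is dense on $\N$, every such $S$ yields a structure isomorphic to $\bar{\mathbf C}[I_{n}]^{*}$, and moreover $\mathbf C'\cap \bar{\mathbf A}_{i}[I_{N}]^{*}=\bar{\mathbf A}_{i}\times S$ for each $i$.

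Let $\sigma=(a_{1},\ldots,a_{k})$ be the signature of $\mathbf A$. As observed in the proof of Lemma \ref{lem:infinite1}, copies of $\mathbf A$ inside $\bar{\mathbf A}_{i}\times S$ are in canonical bijection with ${S,\ldots,S \choose a_{1},\ldots,a_{k}}$. Hence the given colouring $\chi$ induces, for each $1\leq i\leq m$, a colouring
\[
\chi_{i}:{[N],\ldots,[N] \choose a_{1},\ldots,a_{k}}\longrightarrow [r],
\]
which I combine into a single product colouring $\tilde\chi=(\chi_{1},\ldots,\chi_{m})$ taking at most $r^{m}$ values.

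Now choose $N$ with $N\rightarrow (n)_{r^{m}}^{(a_{1},\ldots,a_{k})}$, which exists by Lemma \ref{lem:blob}. Applying this to $\tilde\chi$ yields an $n$-element $S\subseteq [N]$ on which $\tilde\chi$, and therefore every individual $\chi_{i}$, is constant. Setting $\mathbf C'=\bar{\mathbf C}\times S$ then makes $\chi$ monochromatic on ${\mathbf C'\cap \bar{\mathbf A}[I_{N}]^{*} \choose \mathbf A}$ for every $\bar{\mathbf A}\in {\bar{\mathbf C} \choose \coll(\mathbf A)}$, as required. There is no genuine obstacle here: the Ramsey content is entirely carried by Lemma \ref{lem:blob}, and the substantive step is the reduction to a product colouring after restricting to $\mathbf C'$ of the form $\bar{\mathbf C}\times S$.
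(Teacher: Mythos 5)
Your reduction to product-form copies $\mathbf C' = \bar{\mathbf C}\times S$ with a single common column set $S$ is where the argument breaks. Lemma \ref{lem:blob} concerns colourings of $\binom{C_1,\ldots,C_k}{a_1,\ldots,a_k}$ for pairwise \emph{disjoint} ground sets and returns sets $B_1\subseteq C_1,\ldots,B_k\subseteq C_k$ that are in general different in each coordinate; it does not return one set $S$ that works diagonally in all coordinates at once. Your colourings $\chi_i$ live on tuples $(T_1,\ldots,T_k)$ of subsets of the \emph{same} index set $[N]$, and a colouring of copies of $\mathbf A$ is allowed to depend on how the index sets over different fibres interleave. For such colourings the common-$S$ conclusion is false: take $\coll(\mathbf A)$ to be a single edge $\{y_1,y_2\}$, $a_1=a_2=1$, $\bar{\mathbf C}=\coll(\mathbf A)$, and colour the copy $\{(y_1,s),(y_2,t)\}$ of $\mathbf A$ according to whether $s\preceq t$ or $t\prec s$. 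Inside any $\bar{\mathbf C}\times S$ with $|S|\geq 2$ both colours occur (take $s\prec t$ in $S$, then swap the roles of $s$ and $t$), so no product-form $\mathbf C'$ is monochromatic over $\bar{\mathbf A}$, no matter how large $N$ is. So the issue is not only that Lemma \ref{lem:blob} was quoted with a stronger conclusion than it has; the statement you are trying to prove after restricting to product-form copies is itself untrue.

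The copies $\mathbf C'$ must be allowed to use different $n$-subsets of the $N$-element fibres over different vertices of $\bar{\mathbf C}$; this is exactly what Lemma \ref{lem:infinite1} provides (the sets $B_j$ of Lemma \ref{lem:blob}, one per fibre), and in the interleaving example above it is precisely the freedom to place the chosen part of the fibre over $y_1$ entirely $\prec$-below the chosen part of the fibre over $y_2$ that restores monochromaticity. The paper then handles the several copies $\bar{\mathbf A}_1,\ldots,\bar{\mathbf A}_\ell$ of $\coll(\mathbf A)$ not by one application with $r^\ell$ colours but by iterating Lemma \ref{lem:infinite1}: set $N_0=n$, choose $N_{i+1}$ with $N_{i+1}\rightarrow (N_i)_r^{(a_1,\ldots,a_k)}$, and shrink the fibres once for each $\bar{\mathbf A}_i$, using that monochromaticity already achieved is preserved under further shrinking. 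Bundling all the $\bar{\mathbf A}_i$ into a single product colouring with $r^\ell$ colours cannot replace this iteration as you set it up, because the correspondence between the coordinate position $j$ and an actual vertex of $\bar{\mathbf C}$ changes from one $\bar{\mathbf A}_i$ to another; that mismatch is exactly what pushed you into the untenable common-$S$ format.
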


\begin{proof}
First enumerate ${\mathbf{\bar{C}} \choose \coll(\mathbf{A})} =
\mathbf{\bar{A}_1}, \ldots, \mathbf{\bar{A}_\ell}$. Now let $N_0=n$,
and for $i<\ell$ inductively use Lemma \ref{lem:infinite1} to find
$N_{i+1}$ so that there exists
$\mathbf{A}'\in{\mathbf{\bar{A}_i}[I_{N_{i+1}}]^{*} \choose
\mathbf{\bar{A}_i}[I_{N_i}]^{*}}$ such that $\chi$ is monochromatic on
${\mathbf{A}' \choose \mathbf{A}}$.  Finally, set $N=N_\ell$.

Equivalently, if $\sigma=\left(a_1,\ldots,a_k \right)$ is the
signature of $\mathbf{A}$, then for $i<\ell$ choose $N_{i+1}$ so that
$N_{i+1} \rightarrow(N_i)_{r}^{(a_{1},\ldots,a_{k})}$ and let
$N=N_\ell$.

\end{proof}

We are now ready to  show that $\age(\mathbf{T}[I_{\omega}]^{*})$ satisfies the
Ramsey property, assuming $\mathbf{T}^{*}$ satisfies it in the first place. 

\begin{rem}
It came to attention of the authors that one can also obtain the following result by applying Theorem 1.3 from \cite{key-S4}.
\end{rem}

\begin{thm}
The class $\age(\mathbf{T}[I_{\omega}]^{*})$ satisfies the Ramsey property.
\end{thm}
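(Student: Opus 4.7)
My plan is to combine the Ramsey property of $\age(\m T^{*})$ with Corollary \ref{cor:infinite1}, using the former to control the ``base'' tournament and the latter to control the $\omega$-fibres. Fix finite substructures $\m A\leq \m B$ of $\m T[I_{\omega}]^{*}$ (the other case being trivial) and $r\in\N$. I would first pass to the collapses $\bar{\m A}=\coll(\m A)$ and $\bar{\m B}=\coll(\m B)$, which lie in $\age(\m T^{*})$, and let $n$ be the maximum fibre size of $\m B$, so that in particular $\m B$ embeds into $\bar{\m B}[I_{n}]^{*}$. Using the assumption that $\age(\m T^{*})$ has the Ramsey property, I fix $\bar{\m C}\in\age(\m T^{*})$ with $\bar{\m C}\arrows{(\bar{\m B})}{\bar{\m A}}{r}$, and then apply Corollary \ref{cor:infinite1} (to $\m A$, the base $\bar{\m C}$, this $n$, and $r$ colours) to obtain $N\in\N$. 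The candidate witness is $\m C:=\bar{\m C}[I_{N}]^{*}$.

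Given a colouring $\chi:\binom{\m C}{\m A}\to [r]$, the corollary produces $\m C'\in\binom{\m C}{\bar{\m C}[I_{n}]^{*}}$ such that for every $\bar{\m A}'\in\binom{\bar{\m C}}{\bar{\m A}}$, $\chi$ is constant on $\binom{\m C'\cap \bar{\m A}'[I_{N}]^{*}}{\m A}$. Since $\m C'\cap \bar{\m A}'[I_{N}]^{*}\cong \bar{\m A}'[I_{n}]^{*}$ and $n$ is at least the maximum fibre size of $\m A$, each of these sets contains a copy of $\m A$, so I may define $\bar{\chi}:\binom{\bar{\m C}}{\bar{\m A}}\to [r]$ by letting $\bar{\chi}(\bar{\m A}')$ be the common $\chi$-value above $\bar{\m A}'$. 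Applying $\bar{\m C}\arrows{(\bar{\m B})}{\bar{\m A}}{r}$ to $\bar{\chi}$ yields $\bar{\m B}'\in\binom{\bar{\m C}}{\bar{\m B}}$ on which $\bar{\chi}$ is constant. Inside $\m C'\cap \bar{\m B}'[I_{N}]^{*}\cong \bar{\m B}'[I_{n}]^{*}$ I may pick a copy $\m B'$ of $\m B$; every copy of $\m A$ inside $\m B'$ has its collapse in $\binom{\bar{\m B}'}{\bar{\m A}}$ and therefore receives the common $\bar{\chi}$-colour, so $\chi$ is constant on $\binom{\m B'}{\m A}$, as required.

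The main obstacle is essentially the one Corollary \ref{cor:infinite1} was tailored to address: because the expansion $\m T[I_{\omega}]^{*}$ carries no unary ``level'' predicates $L_{i}$ (in contrast to the $\m T[I_{n}]^{*}$ case of Section \ref{section:TIn}), the clean collapse/lift recipe of Lemma \ref{lem:collapse} cannot be invoked to reduce $\chi$ outright to a colouring on $\binom{\bar{\m C}}{\bar{\m A}}$. One must instead compress the fibre dimension $N$ down to $n$ one base copy at a time via the product Ramsey lemma (Lemma \ref{lem:blob}), as in the proof of Lemma \ref{lem:infinite1}, and then iterate over $\binom{\bar{\m C}}{\bar{\m A}}$ to obtain the simultaneous compression provided by the corollary. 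Once that uniformity is in hand, the two-level Ramsey argument above goes through cleanly and $\m C=\bar{\m C}[I_{N}]^{*}$ is the required witness.
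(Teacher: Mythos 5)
Your proposal is correct and follows essentially the same route as the paper: reduce to the collapse level by first choosing $\bar{\m C}$ via the Ramsey property of $\age(\m T^{*})$, then use Corollary \ref{cor:infinite1} (built on Lemma \ref{lem:blob}) to make the colour of a copy of $\m A$ inside $\m C'\cong\bar{\m C}[I_{n}]^{*}$ depend only on its collapse, and finish by applying the collapse-level Ramsey statement to the induced colouring $\bar\chi$. The only cosmetic difference is that the paper assumes $\m B=\bar{\m B}[I_{n}]^{*}$ without loss of generality at the outset, whereas you keep $\m B$ and extract a copy of it from $\bar{\m B}'[I_{n}]^{*}$ at the end.
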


\begin{proof}
We need to show that if $\mathbf{A}\subseteq\mathbf{B}$ are finite substructures of
$\mathbf{T}[I_{\omega}]^{*}$, then for any $r$, there is a finite
$\mathbf{C}\subseteq\mathbf{T}[I_{\omega}]^{*}$ such that
\[
\mathbf{C}\rightarrow(\mathbf{B})_{r}^{\mathbf{A}}.
\]
Without loss of generality we may suppose that
$\mathbf{B}=\bar{\mathbf{B}}[I_{n}]^{*}$ where $\bar{\mathbf{B}}$ is a
finite substructure of $\mathbf{T}^{*}$.

Since $\age(\mathbf{T}^{*})$ satisfies the Ramsey property, let
$\mathbf{\bar{C}}$ be a finite substructure of $\mathbf{T}^{*}$ such that
\[
\bar{\mathbf{C}}\rightarrow(\bar{\mathbf{B}})_{r}^{\coll(\mathbf{A})}.
\]

By Corollary \ref{cor:infinite1}, there is $N$ and
$\mathbf{C}'\in{\mathbf{\bar{C}}[I_{N}]^{*} \choose
\mathbf{\bar{C}}[I_{n}]^*}$ such that $\chi$ is monochromatic on 
all copies of $\mathbf{A}$ in $\mathbf{C'}$ having the same collapse. 

Thus define $\bar{\chi}:{\mathbf{\bar{C}} \choose
\coll(\mathbf{A})}\rightarrow [r]$ by
$\bar{\chi}(\bar{\mathbf{A}})=\chi(\mathbf{A}')$ for some (any)
$\mathbf{A}'\in{\mathbf{C}' \choose \mathbf{A}}$ such that
$\coll(\mathbf{A'})=\bar{\mathbf{A}}$.

This is well defined by construction, and now by assumption there is
$\tilde{\mathbf{B}}\in{\bar{\mathbf{C}} \choose
\bar{\mathbf{B}}}$ such that $\bar{\chi}$ is monochromatic on
${\tilde{\mathbf{B}} \choose \coll(\mathbf{A})}$.  Finally set 
\[
\mathbf{B}'=\tilde{\mathbf{B}}[I_{N}]^{*}\cap\mathbf{C}'.
\]

Since $\mathbf{C}'\cong\bar{\mathbf{C}}[I_{n}]^{*}$, we have
$\mathbf{B}'\cong\bar{\mathbf{B}}[I_{n}]^{*}=\mathbf{B}$.

Now for $\mathbf{A}',\mathbf{A}''\in{\mathbf{B}' \choose \mathbf{A}}$,
we have $\coll(\mathbf{A}'),\coll(\mathbf{A}'')\in{\tilde{\mathbf{B}}
\choose \coll(\mathbf{A})}$, and therefore:
\[
\chi(\mathbf{A}')=\bar{\chi}(\coll(\mathbf{A}'))=\bar{\chi}(\coll(\mathbf{A}''))=\chi(\mathbf{A}'').
\]
 
This completes the proof.
\end{proof}

\begin{thm}
If $\age(\mathbf{T}^{*})$ satisfies the expansion property with
respect to $\age(\mathbf{T})$, then $\age(\mathbf{T}[I_{\omega}]^{*})$
satisfies the expansion property with respect to
$\age(\mathbf{T}[I_{\omega}])$.
\end{thm}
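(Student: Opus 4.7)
The plan is to follow the same template as the proof of Theorem \ref{thm:exp1} for $\mathbf{T}[I_n]^*$, since both rest on the fact that a $\mathbf{T}[I_\omega]^*$-expansion of a structure in $\age(\mathbf{T}[I_\omega])$ is determined by a $\mathbf{T}^*$-expansion of its collapse together with finite linear orders on the fibers. The key observation is that the language of $\mathbf{T}[I_\omega]^*$ contains no unary predicates $L_i$ distinguishing fibers, so embedding a fiber of $\mathbf{A}^*$ into a fiber of $\mathbf{B}^*$ reduces to an order-preserving injection between finite linearly ordered sets.

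Concretely, given $\mathbf{A}\in \age(\mathbf{T}[I_\omega])$, I would set $\bar{\mathbf{A}} = \coll(\mathbf{A})$ and $n = |A|$. Applying the expansion property for $\age(\mathbf{T}^*)$ over $\age(\mathbf{T})$ yields some $\bar{\mathbf{B}} \in \age(\mathbf{T})$ witnessing it for $\bar{\mathbf{A}}$, and my proposed witness is $\mathbf{B} = \bar{\mathbf{B}}[I_n] \in \age(\mathbf{T}[I_\omega])$. For the verification, let $\mathbf{A}^*, \mathbf{B}^*$ be arbitrary expansions of $\mathbf{A}$ and $\mathbf{B}$ in $\age(\mathbf{T}[I_\omega]^*)$. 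Each canonically induces a $\mathbf{T}^*$-expansion of its collapse by restricting the lex ordering and the auxiliary relations of $L_{\mathbf{T}^*}$ to fiber representatives; call these $\bar{\mathbf{A}}^*$ and $\bar{\mathbf{B}}^*$. By the choice of $\bar{\mathbf{B}}$, there is an embedding $\phi : \bar{\mathbf{A}}^* \hookrightarrow \bar{\mathbf{B}}^*$. I then extend $\phi$ fiberwise: for each $x \in \bar{\mathbf{A}}$, the fiber of $\mathbf{A}^*$ over $x$ has at most $n$ elements, linearly ordered by the inherited lex order, while the fiber of $\mathbf{B}^*$ over $\phi(x)$ has exactly $n$ such elements; send the former into the latter by any order-preserving injection.

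Checking that the combined map is an embedding of $\mathbf{A}^*$ into $\mathbf{B}^*$ is then routine: the edge relation and the auxiliary relations of $L_{\mathbf{T}^*}$ depend only on the first coordinates and are preserved by $\phi$, while the lex order is preserved between fibers by $\phi$ and within fibers by construction. I do not anticipate a substantive obstacle; the only point requiring care is to make the fibers of $\mathbf{B}$ large enough to accommodate those of $\mathbf{A}$, which is why $n = |A|$ is chosen as a crude but adequate upper bound. The contrast with Theorem \ref{thm:exp1} is that, in the absence of the $L_i$ predicates, one need not match fiber sizes exactly, so the fiberwise step becomes a triviality about embedding shorter finite linear orders into longer ones.
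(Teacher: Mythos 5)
Your proposal is correct and follows essentially the same route as the paper: collapse $\mathbf{A}$, apply the expansion property of $\age(\mathbf{T}^{*})$ to get $\bar{\mathbf{B}}$, take $\bar{\mathbf{B}}[I_{n}]$ with $n$ at least the maximal fiber size, and use that (in the absence of the $L_i$ predicates) any expansion is convexly ordered, so the fiberwise step is just embedding shorter linear orders into longer ones. Your write-up merely makes explicit the fiberwise verification that the paper leaves implicit.
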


\begin{proof}
Suppose $\mathbf{A}\in\age(\mathbf{T}[I_{\omega}])$. It embeds in
$\bar{\mathbf{A}}[I_{n}]$ for some $n$, where $\bar{\mathbf{A}} = \coll(\mathbf{A})\in\age(\mathbf{T})$.
Find $\bar{\mathbf{B}}\in\age(\mathbf{T})$ such that any expansion of
$\bar{\mathbf{A}}$ embeds in any expansion of $\bar{\mathbf{B}}$.
When expanding $\mathbf{A}$, each part is linearly ordered. Therefore,
the convexly ordered expansion $\mathbf{A}^{*}$ must embed in any
expansion $\bar{\mathbf{B}}[I_{n}]^{*}$ (recall we omitted the relations
$L_{i}$).\end{proof}

If $\m T$ is finite, then only the case $\m T = C_3$ needs attention
since the case $\m T=I_1$ is trivial.  So let $\m T=C_3$ and expand
$\mathbf{T}[I_{\omega}]$ to $\mathbf{T}[I_{\omega}]^{*}$ as follows:
We can assume the universe is  $T = [3] $, and let $P_0, \ldots, P_2$ be
unary predicates and $<$ be a binary predicate.  The expansion
$\mathbf{T}[I_{\omega}]^{*}$ is obtained by interpreting $P_i = \{
(i,j): (i,j)\in \N \}$  for $i \in [3]$, and $<^*$ as a  dense  linear order on each $P_i$  
such that $P_0 <^* P_1 <^* P_2$.

\begin{thm}\label{FinTOmega-RP}
$\age(C_3[I_{\omega}]^{*})$ satisfies the Ramsey property.
\end{thm}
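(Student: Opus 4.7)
The plan is a direct application of the product Ramsey lemma. The first observation to record is that a finite substructure $\m A$ of $C_3[I_\omega]^*$ is determined up to isomorphism by the triple $(a_0,a_1,a_2)$ where $a_i = |A \cap P_i|$: the edge relation between distinct parts $P_i$ and $P_j$ is dictated by the cyclic tournament $C_3$, each $P_i$-part is edgeless, and the restriction of $<^*$ to $A$ is the convex concatenation of finite linear orders of sizes $a_0$, $a_1$, $a_2$ (because $P_0 <^* P_1 <^* P_2$ and $<^*$ is dense on each $P_i$); since any two finite linear orders of the same size are isomorphic, the isomorphism type depends only on these sizes. Consequently, $\binom{\m B}{\m A}$ is in natural bijection with $\binom{B \cap P_0,\,B \cap P_1,\,B \cap P_2}{a_0,a_1,a_2}$ (in the notation of Lemma \ref{lem:blob}), via the map $\m A' \mapsto (A' \cap P_0,\, A' \cap P_1,\, A' \cap P_2)$.

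Given $\m A, \m B \in \age(C_3[I_\omega]^*)$ with respective size triples $(a_0,a_1,a_2)$ and $(b_0,b_1,b_2)$, and given $r \in \N$, the plan is to apply Lemma \ref{lem:blob} to obtain disjoint sets $C_0,C_1,C_2 \subseteq \N$ such that
\[
(C_0,C_1,C_2) \rightarrow (b_0,b_1,b_2)_r^{(a_0,a_1,a_2)},
\]
and then take $\m C$ to be the substructure of $C_3[I_\omega]^*$ with universe $\bigcup_{i \in [3]}\{(i,j) : j \in C_i\}$, inheriting all relations from $C_3[I_\omega]^*$.

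Any coloring $\chi : \binom{\m C}{\m A} \rightarrow [r]$ then transports along the bijection above to a coloring $\bar\chi$ of $\binom{C_0,C_1,C_2}{a_0,a_1,a_2}$; the conclusion of Lemma \ref{lem:blob} yields $B_i \subseteq C_i$ with $|B_i| = b_i$ on which $\bar\chi$ is constant, and the corresponding substructure $\m B' \subseteq \m C$ with universe $\bigcup_{i \in [3]}\{(i,j) : j \in B_i\}$ is a copy of $\m B$ on which $\chi$ is monochromatic. There is essentially no obstacle here: the unary predicates $P_i$ rigidify finite substructures enough that the Ramsey problem reduces cleanly to the product statement of Lemma \ref{lem:blob}. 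In contrast to the case of an infinite homogeneous tournament $\m T$ treated earlier in the section, no auxiliary Ramsey theorem for $\m T^*$ needs to be invoked and no preliminary step analogous to Corollary \ref{cor:infinite1} is required.
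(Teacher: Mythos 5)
Your proof is correct and is essentially the paper's own argument: the paper likewise observes that a finite member of $\age(C_3[I_{\omega}]^{*})$ can be identified, via the predicates $P_i$ and the convex order, with an element of ${\N,\N,\N \choose a_1,a_2,a_3}$, and then invokes Lemma \ref{lem:blob}. Your write-up just spells out the bijection and the transport of colourings that the paper leaves implicit.
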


\begin{proof} 
A finite structure $\m A ^* \in \age(C_3[I_{\omega}] ^*)$ can be
viewed as an element in ${\N,\N,\N \choose a_1,a_2,a_3}$.
 Therefore, the Ramsey property follows from
Lemma \ref{lem:blob}.
\end{proof}

Similarly the expansion property is immediate.

\begin{thm}\label{FinTOmega-exp}
$\age(C_3[I_{\omega}]^{*})$ satisfies
the expansion property with respect to $\age(C_3[I_{\omega}])$.
\end{thm}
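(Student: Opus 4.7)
The plan is to exhibit, for each $\m A \in \age(C_3[I_\omega])$, an explicit witness $\m B$ for the expansion property, following a pattern analogous to the earlier cases of $\m T[I_n]$ and $\m T[I_\omega]$ with infinite $\m T$, but simplified by the fact that $C_3$ is finite and rigid once the predicates $P_0, P_1, P_2$ are introduced. Given $\m A \in \age(C_3[I_\omega])$ with $|A| = N$, I take $\m B$ to be the substructure of $C_3[I_\omega]$ with universe $\{(i,j) : i \in [3],\ 0 \leq j < N\}$, so that $\m B$ consists of three independent fibers $\{i\} \times [N]$, each of size $N$, joined according to $C_3$.

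The crucial observation is that, for any $\m X^* \in \age(C_3[I_\omega]^*)$, the partition of $X$ induced by $P_0, P_1, P_2$ is completely forced by the edge relation of $\m X$: vertices sharing a predicate $P_i$ must be non-adjacent, while vertices in distinct $P_i$'s must be adjacent. Since ``non-adjacency or equality'' is an equivalence relation on any finite subset of $C_3[I_\omega]$ with at most three classes (corresponding to the three fibers), the interpretation of the $P_i$'s is pinned down up to the cyclic relabelling induced by $\aut(C_3)$. The only remaining freedom in an expansion $\m X^*$ is this cyclic choice of labels together with a finite linear order on each $P_i^{\m X^*}$.

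From here, the inclusion $\m A^* \leq \m B^*$ is immediate for any expansions $\m A^*$ of $\m A$ and $\m B^*$ of $\m B$. Since all three fibers of $\m B$ have size $N$, one has $|P_i^{\m B^*}| = N$ for each $i$, while $|P_i^{\m A^*}| \leq |A| = N$. For each $i$, choose an order-preserving injection $P_i^{\m A^*} \hookrightarrow P_i^{\m B^*}$; their union $f : A \to B$ preserves each predicate $P_i$ by construction, preserves the linear order both within each part and across parts (since $P_0 <^* P_1 <^* P_2$ holds in both expansions), and preserves the edge relation (as edges of $C_3[I_\omega]$ are determined entirely by which $P_i$'s the endpoints inhabit).

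I do not expect any substantive obstacle: the argument rests on the single observation that the $P_i$-partition is rigidly dictated by the edge structure, so that the degrees of freedom in an expansion reduce to a cyclic label plus a linear order on each class, both of which are trivially absorbed by choosing $\m B$ with three sufficiently large fibers.
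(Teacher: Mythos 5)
Your proposal is correct and is essentially the argument the paper has in mind: the paper omits the proof as ``immediate,'' and your witness $\{(i,j): i\in[3],\ j<N\}$ is exactly the natural one, mirroring the explicit witness the paper does give for the dual case $I_\omega[C_3]$ (Theorem \ref{IomegaFinT-exp}). The key points you spell out --- that in any member of $\age(C_3[I_\omega]^*)$ the predicates refine the non-adjacency classes and, together with the convex order $P_0<^*P_1<^*P_2$, determine both the edge directions and the cross-part order, so that part-by-part order-preserving injections glue to an embedding --- are precisely what makes the statement immediate.
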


And this also completes our program verification in this case.  

\section{$I_{n}[\m T]$}

\label{section:InT}

We again treat the case with $n$ finite separately. For $\mathbf{T}$
be a countable homogeneous tournament, the structure $I_{n}[\m T]$ is
then the disjoint union of $n$ copies of $\m T$. We define the
structure $I_{n}[\m T]^{*}$ as the disjoint union of $n$ copies of $\m
T^{*}$, for which each copy of $\m T^{*}$ corresponds to a unary
predicate, and which is equipped with a convex linear order that
orders the parts in a given specific pattern. More formally, the
universe of $I_{n}[\m T]^{*}$ is made of all pairs $(i,x)$ with $i\in
[n]$ and $x\in T^{*}$. When $R$ is a relational symbol in $L_{\m T
^{*}}\smallsetminus \{ <\}$, set $R^{\m T [I_{n}]^{*}}(\vec u)$ iff $R^{\m T ^{*}}(\vec
x)$, where $\vec u = ((k,x_1),\ldots,(k,x_n))$, for some $k$. As for $<$, interpret it as the lexicographical ordering of the
usual order on $[n]$ and $<^{*}$ as per $\m T^*$. Finally, add to the resulting
structure $n$ new unary predicates $(P_{i} ^{I_{n}[\m T]^{*}})_{i\in
[n]}$ defined by $P_{i}^{I_{n}[\m T]^{*}}(j,x)$ iff $j=i$. Note that
therefore $P_{0}^{I_{n}[\m T]^{*}}<^{I_{n}[\m T]^{*}}\ldots<^{I_{n}[\m
T]^{*}}P_{n-1} ^{I_{n}[\m T]^{*}}$. Note also that starting from a
finite substructure $\mathbf{\bar A}$ of $\m T^{*}$, the same
procedure yields a finite substructure $I_{n}[\mathbf{\bar A}]^{*}$ of
$I_{n}[\m T]^{*}$.


\begin{rem}
The short proof of Theorem \ref{thm:finite1} is essentially the same as the standard proof of Lemma \ref{lem:blob} and Theorem 2 from \cite{key-S1}.  We retain it here to keep the paper as self-contained as possible.
\end{rem}

\begin{thm} \label{thm:finite1}
The class $\age(I_n[\mathbf{T}]^{*})$ satisfies the Ramsey Property.
\end{thm}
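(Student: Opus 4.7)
The plan is to reduce the statement to an iterated Ramsey argument in $\age(\mathbf{T}^{*})$, whose Ramsey property is assumed (see Section~\ref{section:homogeneous}). The key observation is that every $\mathbf{A} \in \age(I_{n}[\mathbf{T}]^{*})$ decomposes canonically as a disjoint union $\mathbf{A} = \bigsqcup_{i=1}^{n} \mathbf{A}_{i}$, where $\mathbf{A}_{i} = \restrict{\mathbf{A}}{P_{i}}$ may be identified with an element of $\age(\mathbf{T}^{*})$. Because the unary predicates $P_{i}$ and the convex ordering of $I_{n}[\mathbf{T}]^{*}$ rigidly prescribe how the parts are recombined, for any candidate $\mathbf{C} = \bigsqcup_{i} \mathbf{C}_{i}$ with each $\mathbf{C}_{i} \in \age(\mathbf{T}^{*})$, the copies of $\mathbf{A}$ in $\mathbf{C}$ are in natural bijection with the sequences $(\mathbf{A}_{1}', \ldots, \mathbf{A}_{n}')$ satisfying $\mathbf{A}_{i}' \in \binom{\mathbf{C}_{i}}{\mathbf{A}_{i}}$. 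Thus the Ramsey property for $\age(I_{n}[\mathbf{T}]^{*})$ is equivalent to a product Ramsey statement over $\age(\mathbf{T}^{*})$.

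Given $\mathbf{A}, \mathbf{B} \in \age(I_{n}[\mathbf{T}]^{*})$ and $r \in \mathbb{N}$, I would build $\mathbf{C} = \bigsqcup_{i} \mathbf{C}_{i}$ by reverse induction on the coordinate. First, apply the Ramsey property of $\age(\mathbf{T}^{*})$ to choose $\mathbf{C}_{n}$ with $\mathbf{C}_{n} \arrows{(\mathbf{B}_{n})}{\mathbf{A}_{n}}{r}$. Then, assuming $\mathbf{C}_{i+1}, \ldots, \mathbf{C}_{n}$ already fixed, pick $\mathbf{C}_{i} \in \age(\mathbf{T}^{*})$ satisfying
\[
\mathbf{C}_{i} \arrows{(\mathbf{B}_{i})}{\mathbf{A}_{i}}{r_{i}}, \qquad r_{i} := r^{m_{i}}, \quad m_{i} := \prod_{j>i} \left|\binom{\mathbf{C}_{j}}{\mathbf{A}_{j}}\right|,
\]
so that colorings inflated by all possible higher-coordinate patterns can be simultaneously tamed. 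Each $r_{i}$ is finite, so Ramsey for $\age(\mathbf{T}^{*})$ does deliver such a $\mathbf{C}_{i}$.

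Given a coloring $\chi : \binom{\mathbf{C}}{\mathbf{A}} \rightarrow [r]$, use the standard iterated pullback. Regard $\chi$ as a coloring $\chi_{1}$ of $\binom{\mathbf{C}_{1}}{\mathbf{A}_{1}}$ with values in $[r]^{\prod_{j>1} \binom{\mathbf{C}_{j}}{\mathbf{A}_{j}}}$ via
\[
\chi_{1}(\mathbf{A}_{1}')\bigl(\mathbf{A}_{2}', \ldots, \mathbf{A}_{n}'\bigr) \;=\; \chi\bigl(\mathbf{A}_{1}' \sqcup \cdots \sqcup \mathbf{A}_{n}'\bigr),
\]
apply Ramsey to obtain $\mathbf{B}_{1}' \in \binom{\mathbf{C}_{1}}{\mathbf{B}_{1}}$ on which $\chi_{1}$ is constant, and then restrict attention to copies of $\mathbf{A}$ whose first part lies in $\mathbf{B}_{1}'$. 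The resulting coloring depends only on the remaining coordinates, and one iterates the same construction on $\mathbf{C}_{2}$, then $\mathbf{C}_{3}$, and so on. After $n$ such steps, we obtain copies $\mathbf{B}_{i}' \in \binom{\mathbf{C}_{i}}{\mathbf{B}_{i}}$ with $\chi$ constant on $\binom{\bigsqcup_{i} \mathbf{B}_{i}'}{\mathbf{A}}$, and $\mathbf{B}' := \bigsqcup_{i} \mathbf{B}_{i}'$ is the desired monochromatic copy of $\mathbf{B}$ in $\mathbf{C}$.

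The main obstacle here is purely bookkeeping rather than conceptual: one must verify that the iterated pullback is well-defined (each auxiliary coloring only depends on coordinates not yet fixed, which follows from constancy at earlier stages) and that $\mathbf{C} = \bigsqcup_{i} \mathbf{C}_{i}$, equipped with the natural $P_{i}$-partition and convex ordering extending the $<$-orderings on the $\mathbf{C}_{i}$'s, indeed belongs to $\age(I_{n}[\mathbf{T}]^{*})$, which is immediate from the very definition of $I_{n}[\mathbf{T}]^{*}$ as an ordered disjoint union of copies of $\mathbf{T}^{*}$. As the remark preceding the theorem points out, this is the standard product/sequence Ramsey argument.
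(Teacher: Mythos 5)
Your proposal is correct and is essentially the paper's argument: both reduce to the Ramsey property of $\age(\mathbf{T}^{*})$ via the part-wise decomposition forced by the predicates $P_i$ and the convex order, choosing the pieces $\mathbf{C}_i$ with suitably inflated numbers of colours and stabilizing one coordinate at a time; the paper merely packages the same iterated pullback as an induction on $n$ (treating parts $0,\dots,n-2$ as a single block at each step) rather than as a single $n$-fold iteration.
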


\begin{proof}
Let $r$ be a positive integer and $\mathbf{A}\subseteq\mathbf{B}\subseteq
I_{n}[\m T]^{*}$, we need to find $\mathbf{C}\subseteq I_{n}[\m T]^{*}$
such that
\[
\mathbf{C}\rightarrow(\mathbf{B})_{r}^{\mathbf{A}}.
\]

We proceed by induction on $n$, the base case $n=1$ simply being the
Ramsey property of $\age(\mathbf{T}^{*})$.

For the induction step, suppose the desired property is true for $n-1$. First set
$A_{i}=P_{i}^{I_n[T]^{*}}\cap A$ and $B_{i}=P_{i}^{I_n[T]^{*}}\cap B$. Note that
\[
\mathbf{A}^{0}:=\mathbf{A}_{0}\sqcup\ldots\sqcup\mathbf{A}_{n-2}\subseteq I_{n-1}[\m T]^{*}
\]
 and
\[
\mathbf{B}^{0}:=\mathbf{B}_{0}\sqcup\ldots\sqcup\mathbf{B}_{n-2}\subseteq I_{n-1}[\m T]^{*}.
\]

Let $\mathbf{C}^{1}$ be such that
\[
\mathbf{C}^{1}\rightarrow(\mathbf{B}_{n-1})_{r^{\left|{\mathbf{B}^{0} \choose \mathbf{A}^{0}}\right|}}^{\mathbf{A}_{n-1}}
\]

Let $\mathbf{C}^{0}$ be such that 
\[
\mathbf{C}^{0}\rightarrow(\mathbf{B}^{0})_{r^{\left|{\mathbf{C}^{1} \choose \mathbf{A}_{n-1}}\right|}}^{\mathbf{A}^{0}}.
\]
Then  $\mathbf{C}=\mathbf{C}^{0}\sqcup\mathbf{C}^{1}$ is as required. Indeed let 
\[
\chi:{\mathbf{C} \choose \mathbf{A}}\rightarrow [r]
\]
 be any colouring. Define 
\[
\chi_{0}:{\mathbf{C}^{0} \choose \mathbf{A}^{0}}\rightarrow \left[ r^{\left|{\mathbf{C}^{1} \choose \mathbf{A}_{n-1}}\right|}\right]
\]
by setting
\[
\chi_{0}(\mathbf{A}')=\left\{ (\chi(\mathbf{A}'\sqcup\mathbf{A}''),\mathbf{A}''):\mathbf{A}''\in{\mathbf{C}^{1} \choose \mathbf{A}_{n-1}}\right\} 
\]
for $\mathbf{A}'\in {\mathbf{C}^{0} \choose \mathbf{A}^{0}}$.

Subsequently, using a $\chi_{0}$-monochromatic $\mathbf{B}_{0}\in{\mathbf{C}^{0} \choose \mathbf{B}^{0}}$,
further define 
\[
\chi_{1}:{\mathbf{C}^{1} \choose \mathbf{A}_{n-1}}\rightarrow\left[r\right]
\]
by 
\[
\chi_{1}(\mathbf{A}'')=\chi(\mathbf{A}'\sqcup\mathbf{A}'')
\]
where $\mathbf{A}''$ is any member of ${\mathbf{C}^{1} \choose \mathbf{A}_{n-1}}$, for some $\mathbf{A}'\in{\mathbf{B}_{0} \choose \mathbf{A}^{0}}$.
This is independent of the choice of $\mathbf{A}'$ since $\mathbf{B}_{0}$
is $\chi_{0}$-monochromatic.

Therefore, there is a $\chi_{1}$-monochromatic
$\mathbf{B}^{1}\in{\mathbf{C}^{1} \choose \mathbf{B}_{n-1}}$, and thus
$\mathbf{B}_{0}\sqcup\mathbf{B}_{1}$ is $\chi$-monochromatic as desired.
\end{proof}

%
%
%
%
%
%
\begin{thm}
If $\age(\mathbf{T}^{*})$ satisfies the expansion property with respect
to $\age(\mathbf{T})$, then $\age(I_{n}[\mathbf{T}]^{*})$ satisfies
the expansion property with respect to $\age(I_{n}[\mathbf{T}])$.
\end{thm}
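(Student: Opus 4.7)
My plan is to mirror the proof of Theorem \ref{thm:exp1}. Given $\mathbf{A} \in \age(I_{n}[\mathbf{T}])$, I will exhibit a witness of the form $\mathbf{B} = I_{n}[\bar{\mathbf{B}}]$ for a suitably chosen $\bar{\mathbf{B}} \in \age(\mathbf{T})$. First I note that since $\mathbf{T}$ is a tournament and $I_{n}[\mathbf{T}]$ places edges only within parts, $\mathbf{A}$ decomposes canonically as a disjoint union $\mathbf{A} = \mathbf{A}_{1} \sqcup \cdots \sqcup \mathbf{A}_{k}$ with $k \leq n$, where two vertices lie in the same block iff they are adjacent; each $\mathbf{A}_{i}$ belongs to $\age(\mathbf{T})$. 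Using joint embedding in $\age(\mathbf{T})$, I would pick $\bar{\mathbf{A}} \in \age(\mathbf{T})$ containing all the $\mathbf{A}_{i}$, and then invoke the hypothesis to obtain $\bar{\mathbf{B}} \in \age(\mathbf{T})$ such that every $\mathbf{T}^{*}$-expansion of $\bar{\mathbf{A}}$ embeds in every $\mathbf{T}^{*}$-expansion of $\bar{\mathbf{B}}$.

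Next I would analyze the shape of arbitrary expansions $\mathbf{A}^{*}, \mathbf{B}^{*} \in \age(I_{n}[\mathbf{T}]^{*})$. The key observation is that any such expansion is forced to place distinct tournament blocks into distinct parts $P_{j}$: otherwise two non-adjacent vertices from different blocks would coexist inside some $P_{j}$ and, forming a substructure of a copy of $\mathbf{T}^{*}$, would have to be adjacent -- a contradiction. So $\mathbf{A}^{*}$ is the data of an injection $\pi: \{1,\ldots,k\} \to [n]$ together with a $\mathbf{T}^{*}$-expansion $\mathbf{A}_{i}^{*}$ of each $\mathbf{A}_{i}$ placed at $P_{\pi(i)}$, while $\mathbf{B}^{*}$ is the data of a labeling of the $n$ copies of $\bar{\mathbf{B}}$ by $P_{0}, \ldots, P_{n-1}$ plus a $\mathbf{T}^{*}$-expansion on each copy; in both cases the convex order is thereby determined.

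To embed $\mathbf{A}^{*}$ into $\mathbf{B}^{*}$, I would fix $i$ and first extend the embedding of $\mathbf{A}_{i}$ in $\mathbf{T}^{*}$ witnessing $\mathbf{A}_{i}^{*}$ to an embedding of $\bar{\mathbf{A}}$ in $\mathbf{T}^{*}$ -- possible by the extension property of the Fra\"iss\'e limit $\mathbf{T}^{*}$ since $\mathbf{A}_{i} \leq \bar{\mathbf{A}}$ in $\age(\mathbf{T})$ -- producing a $\mathbf{T}^{*}$-expansion $\bar{\mathbf{A}}^{*}$ of $\bar{\mathbf{A}}$ with $\mathbf{A}_{i}^{*} \leq \bar{\mathbf{A}}^{*}$. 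The choice of $\bar{\mathbf{B}}$ then gives $\bar{\mathbf{A}}^{*} \leq P_{\pi(i)}^{\mathbf{B}^{*}}$, where the right-hand side is a $\mathbf{T}^{*}$-expansion of $\bar{\mathbf{B}}$. Composition yields an embedding $\mathbf{A}_{i}^{*} \hookrightarrow P_{\pi(i)}^{\mathbf{B}^{*}}$, and the disjoint union of these component-wise embeddings is the desired embedding $\mathbf{A}^{*} \hookrightarrow \mathbf{B}^{*}$; it preserves edges, the unary predicates $P_{j}$, and hence the convex order.

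The main point requiring care is the rigidity of the block-to-part assignment (where the tournament-ness of $\mathbf{T}$ is essential); once this is recognized, the problem reduces cleanly to applying the expansion property of $\age(\mathbf{T}^{*})$ one block at a time, and no ingredient beyond the hypothesis and standard Fra\"iss\'e theory is required.
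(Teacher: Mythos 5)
Your proposal is correct and follows essentially the same route as the paper: decompose $\mathbf{A}$ into its tournament blocks, observe that any expansion in $\age(I_{n}[\mathbf{T}]^{*})$ must place distinct blocks in distinct parts with a $\mathbf{T}^{*}$-expansion on each and the convex order determined by the part labels, and build a witness each of whose $n$ parts absorbs every expansion of every block via the expansion property of $\age(\mathbf{T}^{*})$. The only cosmetic difference is that you joint-embed the blocks into a single $\bar{\mathbf{A}}$ and apply the expansion property once (giving the witness $I_{n}[\bar{\mathbf{B}}]$), whereas the paper applies it to each block $\mathbf{A}_{i}$ separately and lets every part of its witness contain copies of all the resulting $\mathbf{B}_{i}$'s.
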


\begin{proof}
Suppose $\mathbf{A}\in\age(I_{n}[\mathbf{T}])$. Then $\mathbf{A}$
has the universe of the form $\{0\}\times A_{0}\sqcup\ldots\sqcup\{n-1\}\times A_{n-1}$
with $\mathbf{A}_{i}\in\age(\mathbf{T})$ or $A_{i}=\emptyset$. For
every $i$, if $A_{i}\neq\emptyset$, find $\mathbf{B}_{i}\in\age(\mathbf{T})$
such that any $\m T^*$ expansion of $\mathbf{A}_{i}$ embeds in any $\m T^*$ expansion
of $\mathbf{B}_{i}$. If $A_{i}=\emptyset$, set $B_{i}=\emptyset$.
We can assume that the universes $B_{i}$ are pairwise disjoint.
Any $I_{n}[\mathbf{T}]^*$ expansion $\mathbf{A}^{*}$ of $\mathbf{A}$ has a universe
of the form $\{0\}\times A_{f(0)}^{*}\sqcup\ldots\sqcup\{n-1\}\times A_{f(n-1)}^{*}$
for some bijection $f:[n]\rightarrow[n]$, where each $\mathbf{A}_{f(i)}^{*}$
is a $\m T^*$  expansion of $\mathbf{A}_{i}$ and each $P_{i}^{\mathbf{A}^{*}}=A_{f(i)}^{*}$.
Set $\mathbf{B}$ to be the convexly ordered structure in $\age(I_{n}[\mathbf{T}]^{*})$
with universe 
\[
B=\{0\}\times\left(\bigcup_{i\in [n]}B_{i}\right)\sqcup\ldots\sqcup\{n-1\}\times\left(\bigcup_{i\in [n]}B_{i}\right)
\]
where the substructures with universes $\{k\}\times B_{i}$ are isomorphic
to $\mathbf{B}_{i}$, for all $i$ and $k$. Therefore, the  $I_{n}[\mathbf{T}]^*$ expansion
$\mathbf{A}^{*}$ must embed in any  $I_{n}[\mathbf{T}]^*$ expansion $\mathbf{B}^{*}$.\end{proof}

\section{$I_{\omega}[\m T]$}

\label{section:IomegaT}

For $\mathbf{T}$ a countable homogeneous tournament, the structure
$I_{\omega}[\m T]$ is the disjoint union of countably many copies of
$\m T$. Formally, its universe is $\N\times T$, and $((i,x)(j,y))$ is
an edge in $I_{\omega}[\m T]$ iff $i=j$ and $(x,y)$ is an edge in $\m
T$. The structure $I_{\omega}[\m T]^{*}$ is defined in the same way as
$I_{n}[\m T]^{*}$ was from $\m T ^{*}$, except that the linear
ordering is the lexicographical product of a dense linear order
$\prec$ on $\N$ with the ordering $<^{*}$ of $\m T^{*}$. Here, no
unary relation $P_{i}$ is added. Note also that starting from a
finite substructure $\bar{\m A}$ of $\m T ^{*}$ and from a positive
integer $n$, the same procedure yields a finite substructure
$I_{n}[\bar{\m A}]^{*}$ of $I_{\omega}[\mathbf{T}]^{*}$.

For $i\in \N$, define the \emph{part} $X_{i}$ to be the set of all
pairs $(i,x)$ for $x \in \m T$. For any $\mathbf{A}\subseteq
I_{\omega}[\m T]^{*}$, define the \emph{collapse} $\coll(\mathbf{A})$
to be the set $\{i:A \cap X_{i}\neq\emptyset\}$.  If $\mathbf{\bar
A}\subseteq\mathbf{T}^{*}$ and $\sigma$ is a finite subset of
$\omega$, then we define $\sigma[\mathbf{\bar A}]^{*}$ to be the
substructure of $I_{\omega}[\m T]^{*}$ consisting of $(i,x)$ with
$x\in \mathbf{\bar A}$ and $i\in\sigma$. Define ${\mathbf{B} \choose
\mathbf{A}}_{\tau}$ to be the subset of ${\mathbf{B} \choose
\mathbf{A}}$ consisting of those copies of $\mathbf{A}$ whose collapse
is precisely $\tau$.

The following is a direct consequence of Theorem \ref{thm:finite1}.

\begin{lem}
\label{lem:infinite2-1}Let $\mathbf{A}$ and $I_{m}[\mathbf{\bar B}]^{*}$
be a finite substructures of $I_{\omega}[\mathbf{T}]^{*}$ and let
$\tau$ be a subset of $[m]$ of size $|\coll(\mathbf{A})|$. For any $r\geq2$, there
is a finite $\mathbf{\bar C}\subseteq\mathbf{T}^{*}$ such that for any colouring
\[
\chi:{I_{m}[\mathbf{\bar C}]^{*} \choose \mathbf{A}}\rightarrow [r]
\]
 there exists $\mathbf{B}'\in{I_{m}[\mathbf{\bar C}]^{*} \choose I_{m}[\mathbf{\bar B}]^{*}}$
such that $\left|\chi'' {\mathbf{B}' \choose \mathbf{A}}_{\tau}\right|=1$. \\
We symbolize this by
\[
I_{m}[\mathbf{\bar C}]^{*}\rightarrow(I_{m}[\mathbf{\bar B}]^{*})_{r}^{\mathbf{A},\tau}.
\]
\end{lem}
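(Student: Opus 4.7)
My plan is to reduce to Theorem \ref{thm:finite1} by \emph{decorating} both $\mathbf{A}$ and $I_m[\bar{\mathbf{B}}]^*$ with unary predicates that record the part of $I_m[\mathbf{T}]^*$ each vertex belongs to; under this decoration, the constraint ``the copy of $\mathbf{A}$ has collapse exactly $\tau$'' becomes ``the copy preserves the part-labels'', which is automatic in $\age(I_m[\mathbf{T}]^*)$.

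Concretely, I would enumerate $\tau=\{t_1<\ldots<t_k\}\subseteq [m]$ and decompose $\mathbf{A}$ as $\mathbf{A}_1\sqcup\ldots\sqcup\mathbf{A}_k$ along its $k$ nonempty parts (in $\prec$-order). I would then define $\mathbf{A}^{**}\in\age(I_m[\mathbf{T}]^*)$ by placing $\mathbf{A}_j$ inside the unary predicate $P_{t_j}$ (leaving the remaining $P_i$'s empty), and expand $I_m[\bar{\mathbf{B}}]^*$ to $I_m[\bar{\mathbf{B}}]^{**}\in\age(I_m[\mathbf{T}]^*)$ by labeling its $i$-th part with $P_i$. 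The key observation is that for any $\bar{\mathbf{C}}\subseteq \mathbf{T}^*$ the forgetful map $\mathbf{A}'^{**}\mapsto \mathbf{A}'$ yields a bijection between $\binom{I_m[\bar{\mathbf{C}}]^{**}}{\mathbf{A}^{**}}$ and $\binom{I_m[\bar{\mathbf{C}}]^*}{\mathbf{A}}_\tau$, since preserving the $P_i$'s is exactly the condition that the copy has collapse $\tau$. The analogous identification holds for copies of $I_m[\bar{\mathbf{B}}]^{**}$ in $I_m[\bar{\mathbf{C}}]^{**}$.

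I would then apply Theorem \ref{thm:finite1} to $\mathbf{A}^{**}\subseteq I_m[\bar{\mathbf{B}}]^{**}$ with colour parameter $r$, obtaining a finite $\mathbf{C}^{**}\subseteq I_m[\mathbf{T}]^*$ with $\mathbf{C}^{**}\rightarrow(I_m[\bar{\mathbf{B}}]^{**})_r^{\mathbf{A}^{**}}$. To finish, I would uniformize: writing the $i$-th part of $\mathbf{C}^{**}$ as $\{i\}\times \bar{\mathbf{C}}_i$, I would pick a finite $\bar{\mathbf{C}}\subseteq \mathbf{T}^*$ containing an isomorphic copy of every $\bar{\mathbf{C}}_i$ and of $\bar{\mathbf{B}}$; then $\mathbf{C}^{**}$ embeds in $I_m[\bar{\mathbf{C}}]^{**}$, and enlarging the target preserves the arrow, so $I_m[\bar{\mathbf{C}}]^{**}\rightarrow(I_m[\bar{\mathbf{B}}]^{**})_r^{\mathbf{A}^{**}}$ still holds. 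Translating through the bijection of the previous paragraph delivers exactly $I_m[\bar{\mathbf{C}}]^*\rightarrow(I_m[\bar{\mathbf{B}}]^*)_r^{\mathbf{A},\tau}$, as required. I do not expect any serious obstacle: the only technical point is this uniformization of $\mathbf{C}^{**}$ into the form $I_m[\bar{\mathbf{C}}]^{**}$, which is routine since enlarging the target cannot destroy a partition arrow.
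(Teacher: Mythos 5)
Your argument is correct and is exactly what the paper intends: the lemma is stated there as a direct consequence of Theorem \ref{thm:finite1}, and your decoration of the parts by unary predicates (so that ``collapse equal to $\tau$'' becomes preservation of the predicates, using that isomorphisms respect parts and the convex order), followed by the uniformization of the output $\mathbf{C}^{**}$ into the form $I_m[\mathbf{\bar C}]^{**}$, is precisely how that deduction goes. One cosmetic point: enumerate $\tau$ in the $\prec$-order of the parts rather than the natural order on $[m]$ when assigning the pieces of $\mathbf{A}$ to the predicates $P_{t_j}$, so that the predicate labelling of $\mathbf{A}^{**}$ is compatible with the convex ordering of $I_\omega[\mathbf{T}]^*$.
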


An iteration of the above argument yields the following. 

\begin{cor}
\label{cor:infinite2-2}Let $\mathbf{A}$ and $I_{m}[\mathbf{\bar B}]^{*}$
be a finite substructures of $I_{\omega}[\mathbf{T}]^{*}$. For any
$r\geq2$, there is a finite $\mathbf{\bar C}\subseteq\mathbf{T}^{*}$ such
that for any colouring 
\[
\chi:{I_{m}[\mathbf{\bar C}]^{*} \choose \mathbf{A}}\rightarrow [r]
\]
 there exists $\mathbf{B}'\in{I_{m}[\mathbf{\bar C}]^{*} \choose I_{m}[\mathbf{\bar B}]^{*}}$
such that $\left|\chi''{\mathbf{B}' \choose \mathbf{A}}_{\tau}\right|=1$
for any $\tau\in{m \choose \coll(\mathbf{A})}$. \\
We symbolize this by
\[
I_{m}[\mathbf{\bar C}]^{*}\rightarrow(I_{m}[\mathbf{\bar B}]^{*})_{r}^{\mathbf{A},\coll}
\]
\end{cor}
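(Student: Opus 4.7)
The plan is a finite iteration of Lemma \ref{lem:infinite2-1}, one step per possible collapse. Enumerate the finite set $\binom{[m]}{|\coll(\mathbf{A})|}$ of potential collapses as $\tau_1, \tau_2, \ldots, \tau_\ell$. Set $\bar{\mathbf{C}}_0 = \bar{\mathbf{B}}$, and inductively, once $\bar{\mathbf{C}}_{i-1}$ has been defined, apply Lemma \ref{lem:infinite2-1} with the target $I_m[\bar{\mathbf{C}}_{i-1}]^*$ and the distinguished collapse $\tau_i$ to obtain $\bar{\mathbf{C}}_i \subseteq \mathbf{T}^*$ such that
\[
I_m[\bar{\mathbf{C}}_i]^* \rightarrow (I_m[\bar{\mathbf{C}}_{i-1}]^*)_r^{\mathbf{A}, \tau_i}.
\]
Then take $\bar{\mathbf{C}} = \bar{\mathbf{C}}_\ell$.

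Given a coloring $\chi : \binom{I_m[\bar{\mathbf{C}}]^*}{\mathbf{A}} \to [r]$, I would peel the iteration off in reverse order. First, applying the arrow relation for $i = \ell$ to $\chi$, produce $\mathbf{D}_{\ell-1} \in \binom{I_m[\bar{\mathbf{C}}_\ell]^*}{I_m[\bar{\mathbf{C}}_{\ell-1}]^*}$ on which $\chi$ is monochromatic on copies of $\mathbf{A}$ with collapse $\tau_\ell$. Then apply the arrow relation for $i = \ell - 1$ inside $\mathbf{D}_{\ell-1}$ (identifying it with $I_m[\bar{\mathbf{C}}_{\ell-1}]^*$) to obtain $\mathbf{D}_{\ell-2} \in \binom{\mathbf{D}_{\ell-1}}{I_m[\bar{\mathbf{C}}_{\ell-2}]^*}$ which is monochromatic on copies of $\mathbf{A}$ with collapse $\tau_{\ell-1}$, and continue. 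After $\ell$ steps, we obtain $\mathbf{B}' := \mathbf{D}_0 \in \binom{I_m[\bar{\mathbf{C}}]^*}{I_m[\bar{\mathbf{B}}]^*}$.

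The key observation making this argument work is that a copy of $\mathbf{A}$ sitting inside a subcopy $\mathbf{D}_{j-1} \subseteq \mathbf{D}_j$ has the same collapse (as a subset of $[m]$) regardless of which ambient substructure it is viewed in. Hence the "$\tau_i$-monochromaticity" obtained at step $i$ is automatically preserved when we pass to any further substructure $\mathbf{D}_{j}$ with $j < i$. Thus $\mathbf{B}'$ inherits monochromaticity on $\binom{\mathbf{B}'}{\mathbf{A}}_{\tau_i}$ for every $i = 1, \ldots, \ell$, which is exactly what is required.

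The main (mild) obstacle is organizational rather than conceptual: one must verify that the identification of $\mathbf{D}_{i-1}$ with $I_m[\bar{\mathbf{C}}_{i-1}]^*$ is legitimate at each step, so that the partition arrow at the next stage applies. This follows because the definition of $I_m[\bar{\mathbf{C}}_{i-1}]^*$ depends only on the isomorphism type of $\bar{\mathbf{C}}_{i-1}$ as a substructure of $\mathbf{T}^*$, and the copies selected by Lemma \ref{lem:infinite2-1} are genuine isomorphic copies respecting the part structure. Once this is noted, the iteration goes through cleanly.
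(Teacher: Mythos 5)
Your proposal is correct and follows essentially the same route as the paper: enumerate the possible collapses $\tau_1,\ldots,\tau_\ell$ and iterate Lemma \ref{lem:infinite2-1} once per collapse, taking $\bar{\mathbf{C}}$ to be the final structure. The paper's proof is exactly this iteration (it leaves the reverse "peeling off" and the invariance of the collapse under passing to substructures implicit, which you spell out correctly).
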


\begin{proof}
First enumerate the set ${m\choose \coll(\mathbf{A})}$ of
$|\coll(\mathbf A)|$-subsets of $[m]$ as \newline $\tau_{1},
\ldots, \tau_\ell$. Let $\mathbf{\bar C}_1$ be such that
\[ I_{m}[\mathbf{\bar C}_1]^{*}\rightarrow(I_{m}[\mathbf{\bar B}]^{*})_{r}^{\mathbf{A},\tau_{1}} \]
and for $i<\ell$ inductively use Lemma \ref{lem:infinite2-1} to find
$\mathbf{\bar C}_{i+1}$ such that
\[ I_{m}[\mathbf{\bar C}_{i+1}]^{*}\rightarrow(I_{m}[\mathbf{\bar C}_{i}]^{*})_{r}^{\mathbf{A},\tau_{i+1}} \]
Finally, set $\mathbf{\bar C}=\mathbf{\bar C}_\ell$.
\end{proof}

\begin{rem}
As before, it came to attention of the authors that, one can also obtain the following result, by applying Theorem 1.4 from \cite{key-S4}.
\end{rem}

\begin{thm}
\label{thm:infinite2-3} The class 
$\mathrm{Age} ( I_{\omega}[\m T]^{*})$ satisfies the Ramsey Property.
\end{thm}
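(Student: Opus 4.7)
The plan is to combine Corollary \ref{cor:infinite2-2}, which homogenizes $\m A$-copies within each fixed collapse, with one application of the classical finite Ramsey theorem, which homogenizes across different collapses. Without loss of generality, I may assume $\m B = I_m[\bar{\m B}]^*$ for some finite $\bar{\m B}\subseteq \m T^*$ and positive integer $m$; set $a := |\coll(\m A)|$.

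First I would pick $M$ large enough that the classical Ramsey relation $M \to (m)^{a}_{r}$ holds on $a$-subsets. Then I apply Corollary \ref{cor:infinite2-2} to the pair $(\m A, I_M[\bar{\m B}]^*)$ with $r$ colours, obtaining a finite $\bar{\m C}\subseteq \m T^*$ with
\[
I_M[\bar{\m C}]^* \rightarrow (I_M[\bar{\m B}]^*)^{\m A,\coll}_{r},
\]
and I set $\m C := I_M[\bar{\m C}]^*$. Given any colouring $\chi:\binom{\m C}{\m A}\to [r]$, Corollary \ref{cor:infinite2-2} produces a copy $\m C'\in\binom{\m C}{I_M[\bar{\m B}]^*}$ on which $\chi$ is constant on $\binom{\m C'}{\m A}_{\tau}$ for every $\tau \in \binom{[M]}{a}$. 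Hence $\chi$ factors through a well-defined colouring $\bar\chi:\binom{[M]}{a}\to [r]$ of the $a$-subsets of the $M$-element part-index set of $\m C'$, and classical Ramsey produces an $m$-subset $\sigma\subseteq[M]$ on which $\bar\chi$ is constant.

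To conclude, let $\m B'$ be the substructure of $\m C'$ whose part-indices are exactly $\sigma$. Because in $I_\omega[\m T]^*$ the part-indices are linearly ordered by the dense order $\prec$, any $m$-element index subset yields a substructure isomorphic to $I_m[\bar{\m B}]^* = \m B$, so $\m B' \cong \m B$; and by construction $\chi$ is constant on $\binom{\m B'}{\m A}$, as required.

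The essential work is already packaged into Corollary \ref{cor:infinite2-2} (and behind it, Theorem \ref{thm:finite1}); the only new ingredient is the extra Ramsey step on $a$-subsets of $[M]$. The step demanding the most care will simply be verifying that $\bar\chi$ is well defined and that any $m$-subset of $[M]$ recovers a faithful copy of $\m B$, both of which follow immediately from per-collapse monochromaticity and the density of $\prec$. I do not anticipate any serious obstacle beyond this bookkeeping.
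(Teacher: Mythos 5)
Your proposal is correct and follows essentially the same route as the paper's own proof: choose the part-index Ramsey number first, apply Corollary \ref{cor:infinite2-2} to $I_M[\bar{\m B}]^*$ to make colours depend only on the collapse, then finish with classical Ramsey on $a$-subsets of the indices and restrict to the selected parts. The only cosmetic difference is your appeal to density of $\prec$; what actually matters (as in the paper) is that the chosen copy is isomorphic to $I_M[\bar{\m B}]^*$, so deleting parts leaves a convexly ordered copy of $I_m[\bar{\m B}]^*$.
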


\begin{proof}
Let $r\in\N$ be positive and let $\mathbf{A}\subseteq
\mathbf{B}\subseteq I_{\omega}[\m T]^{*}$. We need to find
$\mathbf{C}\subseteq I_{\omega}[\m T]^{*}$ such that
\[
\mathbf{C}\rightarrow(\mathbf{B})_{r}^{\mathbf{A}}.
\]
Without loss of generality, we may suppose that
$\mathbf{B}=I_{m}[\mathbf{\bar{B}}]^{*}$. If $|\coll(\mathbf{A})|=k$,
then by Ramsey's theorem there is $N$ such that
\[
N\rightarrow(m)_{r}^{k}.
\]
By Corollary \ref{cor:infinite2-2}, there is a finite $\mathbf{\bar C}\subseteq\mathbf{T}^{*}$
such that 
\[
I_{N}[\mathbf{\bar C}]^{*}\rightarrow(I_{N}[\mathbf{\bar{B}}]^{*})_{r}^{\mathbf{A},\coll}.
\] 
Let $\chi:{I_{N}[\mathbf{\bar{B}}]^{*} \choose \mathbf{A}}\rightarrow
[r]$ be any colouring. By the above, we can find
$\mathbf{B}'\in{I_{N}[\mathbf{\bar C}]^{*} \choose
I_{N}[\mathbf{\bar{B}}]^{*}}$ such that for any $\tau\in{m \choose
\coll(\mathbf{A})}$, we have $\left|\chi''{\mathbf{B}' \choose
\mathbf{A}}_{\tau}\right|=1$.  ({*})

Define $\bar{\chi}:{N \choose \coll(\mathbf{A})}\rightarrow [r]$ as
follows: If $\tau\in{N \choose \coll(\mathbf{A})}$, set $\bar{\chi}(\tau)=\chi(\mathbf{A}')$
for some (any) $\mathbf{A}'\in{\mathbf{B}' \choose \mathbf{A}}_{\tau}$.
The colouring $\bar{\chi}$ is well defined by ({*}). Now, we can
find $\sigma\in{N \choose m}$ such that $\bar{\chi}$
is monochromatic on ${\sigma \choose \coll(\mathbf{A})}$. Set 
\[
\mathbf{B}''=\{(i,x)\in I_{\omega}[\mathbf{T}]^{*}:i\in\sigma\}\cap\mathbf{B}'.
\]
 Since $\mathbf{B}'\cong I_{N}[\mathbf{\bar B}]^{*}$, we have $\mathbf{B}''\cong I_{m}[\mathbf{\bar{B}}]^{*}$.
Let $\mathbf{A}',\mathbf{A}''\in{\mathbf{B}'' \choose \mathbf{A}}$.
Thus, $\coll(\mathbf{A}'),\coll(\mathbf{A}'')\in{\sigma \choose |\coll(\mathbf{A})|}$.
Therefore, 
\[
\chi(\mathbf{A}')=\bar{\chi}(\coll(\mathbf{A}'))=\bar{\chi}(\coll(\mathbf{A}''))=\chi(\mathbf{A}'')
\]
which completes the proof.

\end{proof}

\begin{thm}
If $\age(\mathbf{T}^{*})$ satisfies the expansion property with respect
to $\age(\mathbf{T})$, then $\age(I_{\omega}[\mathbf{T}]^{*})$ satisfies
the expansion property with respect to $\age(I_{\omega}[\mathbf{T}])$.
\end{thm}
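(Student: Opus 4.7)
The plan is to adapt the argument from Theorem~\ref{thm:exp1} on $\mathbf{T}[I_n]$, exploiting the key simplification that the signature of $I_\omega[\mathbf{T}]^*$ contains no unary predicates marking the parts, so that parts of any two expansions may be matched in any order compatible with the convex linear orderings. Given $\mathbf{A}\in\age(I_\omega[\mathbf{T}])$, I would write $\mathbf{A}$ as a disjoint union of its $\ell=|\coll(\mathbf{A})|$ non-empty parts $\mathbf{A}_1,\ldots,\mathbf{A}_\ell$, each being a finite substructure of $\mathbf{T}$, and use joint embedding in $\age(\mathbf{T})$ to fix $\bar{\mathbf{A}}\in\age(\mathbf{T})$ containing a copy of each $\mathbf{A}_i$. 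By the hypothesis I choose $\bar{\mathbf{B}}\in\age(\mathbf{T})$ such that every $\mathbf{T}^*$-expansion of $\bar{\mathbf{A}}$ embeds in every $\mathbf{T}^*$-expansion of $\bar{\mathbf{B}}$, and set $\mathbf{B}=I_\ell[\bar{\mathbf{B}}]\in\age(I_\omega[\mathbf{T}])$.

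To verify $\mathbf{B}$ witnesses the expansion property for $\mathbf{A}$, let $\mathbf{A}^*$ and $\mathbf{B}^*$ be arbitrary $I_\omega[\mathbf{T}]^*$-expansions of $\mathbf{A}$ and $\mathbf{B}$. Since the linear order on $I_\omega[\mathbf{T}]^*$ is convex with respect to parts, $\mathbf{A}^*$ enumerates its parts as $\mathbf{A}_{i_1}^*\prec\cdots\prec\mathbf{A}_{i_\ell}^*$ and $\mathbf{B}^*$ enumerates its $\ell$ parts as $\mathbf{B}_{j_1}^*\prec\cdots\prec\mathbf{B}_{j_\ell}^*$, each $\mathbf{A}_{i_k}^*$ being some $\mathbf{T}^*$-expansion of $\mathbf{A}_{i_k}$ and each $\mathbf{B}_{j_k}^*$ a $\mathbf{T}^*$-expansion of $\bar{\mathbf{B}}$. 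For each $k$ I would extend $\mathbf{A}_{i_k}^*$ to a full $\mathbf{T}^*$-expansion $\bar{\mathbf{A}}_k^*$ of $\bar{\mathbf{A}}$ and then invoke the defining property of $\bar{\mathbf{B}}$ to embed $\bar{\mathbf{A}}_k^*$, and hence $\mathbf{A}_{i_k}^*$, into $\mathbf{B}_{j_k}^*$. Glueing these part-wise embeddings by sending the $k$-th part of $\mathbf{A}^*$ into the $k$-th part of $\mathbf{B}^*$ produces the required embedding $\mathbf{A}^*\hookrightarrow\mathbf{B}^*$: the convex orderings are respected by construction, and the absence of edges between distinct parts is preserved automatically.

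The only mild subtlety is the extension step for $\mathbf{A}_{i_k}^*$, and I would carry it out using the homogeneity of $\mathbf{T}$: embed $\mathbf{A}_{i_k}^*$ into $\mathbf{T}^*$, extend its reduct to an embedding of $\bar{\mathbf{A}}$ into $\mathbf{T}$ (possible because $\mathbf{A}_{i_k}\subseteq\bar{\mathbf{A}}$ and $\mathbf{T}$ is Fra\"{\i}ss\'e), and read off the induced $\mathbf{T}^*$-expansion of $\bar{\mathbf{A}}$. Beyond this point the argument is pure bookkeeping; no genuine obstacle arises, and in particular the payoff of dropping the unary predicates $P_i$ in this signature is precisely that we no longer need to build $\mathbf{B}$ as a large convex product over all possible permutations of parts, as was done in the $I_n[\mathbf{T}]$ case.
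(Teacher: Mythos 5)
Your proof is correct and follows essentially the same route as the paper: decompose $\mathbf{A}$ into its parts, build $\mathbf{B}$ as a disjoint union of identical parts, each of which absorbs every $\mathbf{T}^*$-expansion of every part of $\mathbf{A}$, and glue the part-wise embeddings using convexity of the ordering. The only (harmless) variation is that you joint-embed the parts into one $\bar{\mathbf{A}}$ and invoke the expansion property once --- which forces the extension-of-expansions step that you correctly justify via homogeneity of $\mathbf{T}$ --- whereas the paper applies the hypothesis to each part $\mathbf{A}_i$ separately and places copies of all the resulting witnesses $\mathbf{B}_i$ inside every part of $\mathbf{B}$.
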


\begin{proof}
Suppose $\mathbf{A}\in\age(I_{\omega}[\mathbf{T}])$. Without loss of
generality, we can assume that $\mathbf{A}$ has the universe of the
form $\{0\}\times A_{0}\sqcup\ldots\sqcup\{n-1\}\times A_{n-1}$ with
$\mathbf{A}_{i}\in\age(\mathbf{T})$. For every $i$, find
$\mathbf{B}_{i}\in\age(\mathbf{T})$ such that any expansion of
$\mathbf{A}_{i}$ embeds in any expansion of $\mathbf{B}_{i}$, and we
can assume that the universes $B_{i}$ are pairwise disjoint. Further,
we can assume that any expansion $\mathbf{A}^{*}$ of $\mathbf{A}$ has
the universe of the form $\{0\}\times
A_{f(0)}^{*}\sqcup\ldots\sqcup\{n-1\}\times A_{f(n-1)}^{*}$ for some
bijection $f:[n]\rightarrow[n]$, where each $\mathbf{A}_{f(i)}^{*}$ is an
expansion of $\mathbf{A}_{i}$ and $A_{f(i)}^{*}<A_{f(j)}^{*}$ for
$i<j$. Set $\mathbf{B}$ to be the convexly ordered structure in
$\age(I_{\omega}[\mathbf{T}]^{*})$ with universe
\[
B=\{0\}\times\left(\bigcup_{i \in [n]}B_{i}\right)\sqcup\ldots\sqcup\{n-1\}\times\left(\bigcup_{i \in [n]}B_{i}\right)
\]
where the substructures with universes $\{k\}\times B_{i}$ are isomorphic
to $\mathbf{B}_{i}$, for all $i$ and $k$. Moreover, we ensure that
$(i,a)<(j,b)$ for $i<j$ and all $a,b\in B$. Therefore, the expansion
$\mathbf{A}^{*}$ must embed in any expansion $\mathbf{B}^{*}$.\end{proof}

If $\m T$ is finite, then again only the case $\m T= C_3$ requires
attention.  Expand $I_{\omega}[\mathbf{T}]$ to
$I_{\omega}[\mathbf{T}]^{*}$ as follows: 

We can assume the universe is $T = [3] $, and let $L_0, \ldots, L_2$
be unary predicates and $<$ be a new binary predicate.  The expansion
$\mathbf{T}[I_{\omega}]^{*}$ is obtained by interpreting $L_i = \{
(j,i): j \in \N \}$ for $i \in [3]$, and $<^*$ as a dense linear order
on each $L_i$ such that $L_0 <^* L_1 <^* L_2$.  We can immediately
verify the following.

\begin{lem}\label{lem:IomegaFinT}
Let $\m A, \m A' \subseteq I_{\omega}[\mathbf{T}]^{*}$.  Then $\m A$
and $\m A'$ are isomorphic if and only if there is an order preserving
bijection $f : \coll(\m A) \rightarrow \coll( \m A')$ such that for
every $i$, $\{j : (i,j)\in A\} = \{j : (f(i),j) \in A'\}$.
\end{lem}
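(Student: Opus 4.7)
The plan is to verify both implications directly. The key structural observation, which drives the reverse direction, is that $\m T = C_3$ is a tournament: any two distinct elements of the same column of $I_{\omega}[C_3]^{*}$ are connected by an edge, so any isomorphism of finite substructures is forced to map whole columns to whole columns.

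For the $(\Leftarrow)$ direction, given an order-preserving bijection $f : \coll(\m A) \to \coll(\m A')$ matching the column shapes, I would define $\phi : A \to A'$ by $\phi(i, j) = (f(i), j)$. The shape-matching hypothesis makes $\phi$ a bijection $A \to A'$, and injectivity of $f$ immediately yields preservation of the edge relation (which only relates elements inside a common column). Preservation of the unary predicates $L_j$ is trivial since the second coordinate is unchanged. The only slightly substantive point is preservation of $<^{*}$: if the second coordinates of two elements differ, the $<^{*}$-comparison is determined entirely by which $L_j$ each belongs to, and is automatic; if the second coordinates agree, the comparison reduces to the dense internal order of $L_j$, and this is exactly where the order-preserving property of $f$ enters.

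For the $(\Rightarrow)$ direction, given an isomorphism $\phi : \m A \to \m A'$, I would set $f(i)$ to be the first coordinate of $\phi(i, j)$ for any $j$ with $(i, j) \in A$. The well-definedness of $f$ is the heart of the argument: for two elements $(i, j), (i, j') \in A$ with $j \neq j'$, the pair $(j, j')$ is an edge of $C_3$ in one direction or the other, so these elements are edge-linked in $\m A$, and $\phi$ preserves edges, forcing $\phi(i, j)$ and $\phi(i, j')$ to share a first coordinate. Bijectivity of $f$ follows from bijectivity of $\phi$ together with the dual observation that elements of distinct columns of $\m A$ are never edge-linked (so they cannot be collapsed by $\phi$). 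The shape-matching condition falls out of $\phi$ respecting each $L_j$, and the order-preserving property of $f$ is inherited from $\phi$ preserving $<^{*}$ on each $L_j$. The only non-routine step is this well-definedness of $f$, and it depends critically on $C_3$ being a tournament.
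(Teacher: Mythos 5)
Your structural skeleton is sound: since $C_3$ is a tournament, edges occur exactly within columns, so any isomorphism $\phi$ must send columns to columns, preserve each $L_j$, and hence have the form $(i,j)\mapsto(f(i),j)$ for a shape-matching bijection $f$ of the collapses. The genuine gap is in your treatment of $<^{*}$, and it sits precisely at the clause that gives the lemma its strength, the order-preservation of $f$. You read $<^{*}$ as convex with respect to the classes $L_j$ (``if the second coordinates differ, the comparison is determined entirely by which $L_j$ each belongs to, and is automatic''), and you recover order-preservation of $f$ only from comparisons inside a common $L_j$. Under that reading the equivalence is false, so this step cannot be repaired: take $\m A=\{(a,0),(b,1)\}$ and $\m A'=\{(c,0),(d,1)\}$ with $a,b,c,d$ distinct columns, where the column order puts $a$ before $b$ but $d$ before $c$. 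Each structure is an $L_0$-point and an $L_1$-point in different columns, with no edge, and with your reading of $<^{*}$ the $L_0$-point precedes the $L_1$-point in both, so $\m A\cong\m A'$; yet the only shape-matching bijection sends $a\mapsto c$, $b\mapsto d$ and reverses the column order. More generally, whenever two columns of $A$ have disjoint shapes, preservation of $<^{*}$ on each $L_j$ places no constraint at all on the relative position of their $f$-images, so your forward direction does not yield an order-preserving $f$.

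The lemma is correct (and is what allows Theorem \ref{IomegaFinT-RP} to follow from the usual Ramsey theorem, since the isomorphism type becomes the sequence of column shapes listed in column order) only when $<^{*}$ is convex with respect to the \emph{parts}, i.e.\ the lexicographic product of a dense order on the column indices with the internal order of $C_3^{*}$, exactly as in the general construction of $I_{\omega}[\m T]^{*}$ at the start of the section; the wording of the expansion for $\m T=C_3$ is loose on this point, but only the part-convex reading makes the statement true. With that order your two directions do go through, but the bookkeeping is the opposite of what you wrote: for elements in \emph{different columns} the comparison is governed by the column order regardless of labels --- this is where order-preservation of $f$ is used in the backward direction, and where it is read off (from any pair of representatives, shared label or not) in the forward direction --- while the automatic comparisons are the same-column, different-label ones. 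So the verification of the order should be organized around columns rather than around the classes $L_j$; as written, the order-preservation of $f$ is not established, and under your stated reading of $<^{*}$ it is refutable.
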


From this we immediately obtain the following using the usual Ramsey Theorem.

\begin{thm}\label{IomegaFinT-RP}
The class $\age(I_{\omega}[C_3]^{*})$ satisfies the Ramsey property.
\end{thm}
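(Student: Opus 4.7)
The plan is to use Lemma \ref{lem:IomegaFinT} to reduce the Ramsey property for $\age(I_{\omega}[C_3]^{*})$ to the classical finite Ramsey theorem on $\N$. That lemma tells us the isomorphism type of a finite $\m A \in \age(I_{\omega}[C_3]^{*})$ is completely determined by the ordered collapse $\coll(\m A)$ together with, at each collapse index, the nonempty subset of $[3]$ specifying which of the three vertices of that column lies in $\m A$. So $\m A$ corresponds to a sequence $(S_1, \ldots, S_k)$ of nonempty subsets of $[3]$ with $k = |\coll(\m A)|$, and similarly $\m B$ corresponds to $(T_1, \ldots, T_m)$.

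Given $\m A \subseteq \m B$ and a number of colors $r$, I would pick $N$ with $N \to (m)^k_r$ via the usual finite Ramsey theorem, and then take $\m C$ to be any substructure of $I_{\omega}[C_3]^{*}$ whose collapse has size $N$ and at which every column is full, i.e.\ contains all three vertices. The point of the full-column choice is that, by Lemma \ref{lem:IomegaFinT}, copies of $\m A$ inside $\m C$ are in bijection with $k$-subsets of $\coll(\m C)$: each such subset $\{c_{i_1} < \cdots < c_{i_k}\}$ determines a unique copy of $\m A$ obtained by placing the forced pattern $S_j$ at column $c_{i_j}$, since no other column pattern at those positions would yield something isomorphic to $\m A$.

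Any $r$-coloring $\chi$ of $\binom{\m C}{\m A}$ then transports to a coloring $\bar\chi$ of $\binom{[N]}{k}$, to which the classical Ramsey theorem applies and yields a $\bar\chi$-monochromatic $m$-subset $M \subseteq [N]$. Placing the $\m B$-pattern $(T_1, \ldots, T_m)$ at the $m$ collapse positions of $\m C$ indexed by $M$ produces a copy $\m B' \in \binom{\m C}{\m B}$. The copies of $\m A$ living inside $\m B'$ correspond precisely to those $k$-subsets of $M$ along which the sub-pattern of $(T_1, \ldots, T_m)$ equals $(S_1, \ldots, S_k)$; these form a subfamily of $\binom{M}{k}$, and therefore all receive the same $\chi$-color.

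I do not anticipate a serious obstacle. The one thing to verify carefully is that each copy of $\m A$ sitting inside $\m B'$ coincides, as an actual substructure of $\m C$, with the copy of $\m A$ attached to its $k$-subset under the encoding $\bar\chi$. This holds because whenever the matching condition $T_{j_l} = S_l$ is satisfied, the vertex set of $\m B'$ at column $c_{i_{j_l}}$ is exactly the subset $T_{j_l} = S_l$ of the three vertices there, which agrees with the vertex set selected by the $S$-pattern inside the full column of $\m C$.
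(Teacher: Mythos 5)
Your overall strategy is exactly the paper's: the paper proves this theorem by noting that, via Lemma \ref{lem:IomegaFinT}, a finite member of $\age(I_{\omega}[C_3]^{*})$ is coded by the sequence of column patterns along its ordered collapse, and then invoking the classical finite Ramsey theorem on collapses. Your setup (encode $\m A$ and $\m B$ as pattern sequences, take $N\rightarrow(m)^{k}_{r}$, let $\m C$ have $N$ full columns, transport $\chi$ to a colouring $\bar\chi$ of $k$-subsets of $\coll(\m C)$ using the fact that each $k$-subset carries exactly one copy of $\m A$) is the intended argument.

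There is, however, a genuine flaw in your last step. You claim that the copies of $\m A$ inside $\m B'$ ``correspond precisely to those $k$-subsets of $M$ along which the sub-pattern of $(T_1,\ldots,T_m)$ \emph{equals} $(S_1,\ldots,S_k)$,'' and your careful-verification paragraph is explicitly conditioned on the equality $T_{j_l}=S_l$. That characterization is false: a copy of $\m A$ inside $\m B'$ only needs $S_l\subseteq T_{j_l}$ at each column it meets, and it then uses the $S_l$-labelled points of that column, not all of it. (Take $\m A$ a single $L_0$-point and $\m B$ a full column: every column of $\m B'$ contains a copy of $\m A$, while your criterion finds none.) Since the Ramsey property requires monochromaticity on \emph{all} of $\binom{\m B'}{\m A}$, the argument as written only controls the copies that exhaust the columns of $\m B'$ they meet, and leaves the remaining copies uncovered. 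The repair is short and uses the uniqueness fact you already established: any copy $\m A'$ of $\m A$ inside $\m B'\subseteq\m C$ has a $k$-element collapse contained in $M$, and by Lemma \ref{lem:IomegaFinT} (together with the uniqueness of the order-preserving bijection between two $k$-element chains) its pattern at its $l$-th column is forced to be $S_l$; hence $\m A'$ coincides with the canonical copy attached to $\coll(\m A')$, so $\chi(\m A')=\bar\chi(\coll(\m A'))$, which is constant on $\binom{M}{k}$. With that one-line substitution for your final paragraph, the proof is the paper's.
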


And similarly for the expansion property.

\begin{thm}\label{IomegaFinT-exp}
The class $\age(I_{\omega}[C_3]^{*})$ satisfies the expansion property with respect to  $\age(I_{\omega}[C_3])$.
\end{thm}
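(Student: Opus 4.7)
The plan is to follow the same strategy as in the earlier expansion theorems of this section, taking $\mathbf{B}$ to be a disjoint union of enough full $C_{3}$-components to accommodate every possible labeling of $\mathbf{A}$. Given $\mathbf{A} \in \age(I_{\omega}[C_{3}])$ with $m$ connected components (each a subtournament of $C_{3}$, hence a singleton, a directed edge, or a full $3$-cycle), I would take $\mathbf{B}$ to be the disjoint union of $m$ copies of $C_{3}$, which plainly lies in $\age(I_{\omega}[C_{3}])$.

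To verify the expansion property, I would fix arbitrary expansions $\mathbf{A}^{*}$ and $\mathbf{B}^{*}$ of $\mathbf{A}$ and $\mathbf{B}$ in $\age(I_{\omega}[C_{3}]^{*})$, and list their components in the secondary (component) order of $<^{*}$ as $P_{1} \prec \cdots \prec P_{m}$ and $K_{1} \prec \cdots \prec K_{m}$. Each $P_{l}$ then carries a label set $S_{l} \subseteq [3]$ consistent with its subtournament type, while each $K_{l}$ is fully labeled with label set $[3]$. The candidate embedding is $\phi \colon \mathbf{A}^{*} \to \mathbf{B}^{*}$ sending, for each $l \in [m]$ and each $i \in S_{l}$, the $L_{i}$-vertex of $P_{l}$ to the $L_{i}$-vertex of $K_{l}$.

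Checking that $\phi$ is indeed an embedding is a routine structural verification. Injectivity and preservation of the unary predicates are immediate. Edge preservation follows because within each component both $P_{l}$ and its image inherit the $C_{3}$-subtournament structure on the common label set $S_{l}$, and distinct components carry no edges between them in either structure. Preservation of $<^{*}$ rests on reading this order as lexicographic, with the label stratification $L_{0} <^{*} L_{1} <^{*} L_{2}$ primary and the component order $\prec$ secondary within each $L_{i}$: since $\phi$ preserves labels and sends the $l$-th component in $\prec$-order to the $l$-th component in $\prec$-order, both levels of the lex order are preserved. I do not foresee any real obstacle, since the construction is the natural analogue of those used in the earlier expansion theorems of this section, and Lemma \ref{lem:IomegaFinT} guarantees that the isomorphism type of a substructure of $I_{\omega}[C_{3}]^{*}$ is captured by this data, so the component-by-component map $\phi$ indeed furnishes an embedding of $\mathbf{A}^{*}$ into $\mathbf{B}^{*}$.
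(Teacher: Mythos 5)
Your proposal is correct and takes essentially the same route as the paper: the paper's (one-line) proof uses exactly the same witness, namely $\{(i,j): i\in\coll(\m A),\ j\in[3]\}$, i.e.\ one full copy of $C_3$ for each component of $\m A$, and leaves implicit the componentwise, label-preserving embedding that you spell out via Lemma \ref{lem:IomegaFinT}. The only loose point is that the ``component order'' induced by $<^{*}$ may be only partial when two components have disjoint label sets, but any linear extension of it works, so this does not affect the argument.
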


\begin{proof}
Let $\m A \in \age(I_\omega[C_3])$, then any  expansion $\m A ^*$ of 
$\m A$ will embed in any expansion of
\[ \{(i,j): i\in \coll( \m A), j \in [3]\}. \]
\end{proof}

\section{Complete $n$-partite directed graphs}
\label{section:complete}

For $n\leq \omega$, by a complete $n$-partite directed graph we mean a
directed graph such that the non-edge relation $\perp$ is an
equivalence relation with at most $n$ many classes. For each $n$, the
class of all such graphs is a Fra\"iss\'e class. The corresponding
limit is denoted $n*I_{\omega}$, and consists of $n$ many disjoint
copies of $I_{\omega}$ between which edges are distributed in a
complete and random way. For $n\in \N$ we construct the expansion
$n*I_{\omega}^{*}$ by adding a convex linear ordering together with
$n$ many unary relations $P_{i}^{n*I_{\omega}^{*}}$ corresponding to
the parts of $n*I_{\omega}$ and so that
$P_{0}^{n*I_{\omega}^{*}}<^{n*I_{\omega}^{*}}\ldots<^{n*I_{\omega}^{*}}P_{n-1}
^{n*I_{\omega}^{*}}$. For $n=\omega$, construct
$\omega*I_{\omega}^{*}$ by simply adding to $\omega*I_{\omega}$ a
convex linear ordering that orders each part in a dense linear order,
as well as the set of parts in a dense linear order; note that we do
not add any unary predicates in this case. Those expansions are
precompact, and we show that their ages have the Ramsey property and
show the appropriate expansion properties.

If $\m B$ is a complete $n$-partite directed graph, say that a
subgraph is \newline \emph{transversal} when it has exactly one point in each
$\perp$ equivalence class of $\m B$.


\begin{rem}It came to the attention of the authors that there is a simpler proof of the lemmas below, based on the result in \cite{key-S3}:  One takes the output of the result of \cite{key-S3} and prunes it, arriving at an element of the age of $n*I_{\omega}$.  Though, we include the direct proof based on the partite construction, we will revisit the result of \cite{key-S3} in section \ref{section:p3}. 
\end{rem}

\begin{lem}\label{lem:partite}
Let $n, r\in\N$ be positive and suppose $\m B$ is a finite
substructure of $n*I_{\omega}^{*}$ with exactly $n$ parts. Suppose $\m
A$ is an $n$-partite directed subgraph of $\m B$ which is
transversal. Then there exists a finite $n$-partite graph $\m C$ such
that
\[
\m C\rightarrow(\m B)_{r}^{\m A}.
\]
\end{lem}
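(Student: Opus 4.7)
The plan is to prove the lemma via the Nešetřil--Rödl partite construction, using Lemma \ref{lem:blob} (product Ramsey) as the basic combinatorial engine. Write $b_i = |B \cap P_i|$ for the sizes of the parts of $\m B$ and enumerate $\binom{\m B}{\m A} = \{\m A^1, \ldots, \m A^M\}$. All intermediate structures will be kept inside $\age(n*I_{\omega}^{*})$, hence complete $n$-partite with the convex linear order and unary part-predicates.

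The construction proceeds in two phases. First, I set up a base picture $\Pi_0$: a complete $n$-partite graph with large parts $K_1, \ldots, K_n$ whose bipartite tournament between any two parts is oriented uniformly in the direction dictated by $\m A$. Consequently every transversal of $\Pi_0$ is automatically a copy of $\m A$, and Lemma \ref{lem:blob} applied with $(a_1, \ldots, a_n) = (1, \ldots, 1)$ yields the product Ramsey property for transversals in $\Pi_0$, provided the $|K_i|$ are chosen large enough to absorb a suitably inflated number of colors. Second, I iteratively amalgamate: $\Pi_{k+1}$ is obtained from $\Pi_k$ by a partite free amalgamation of copies of $\m B$ along each occurrence of $\m A^{k+1}$, keeping the result inside $\age(n*I_{\omega}^{*})$. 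Setting $\m C := \Pi_M$, the vertices introduced by the amalgamations provide the mixed edge orientations needed for $\m B$ itself to embed into $\m C$.

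Verification proceeds by reverse induction on $k$: given $\chi : \binom{\m C}{\m A} \to [r]$, I exhibit at stage $k$ a copy of $\m B$ in $\Pi_k$ on which all copies of $\m A$ of types $\m A^{k+1}, \ldots, \m A^M$ are $\chi$-monochromatic, by applying pigeonhole over the amalgamation seam at step $k+1$. At $k = 0$ only the type $\m A^1$ remains to be controlled, and this happens inside $\Pi_0$, where the product Ramsey property closes the argument.

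The main obstacle is the amalgamation step itself: the glue must remain inside $\age(n*I_{\omega}^{*})$ (so every new cross-part pair receives exactly one directed edge compatible with the convex order) and must not introduce spurious copies of $\m A$ across the amalgamation seams that would falsify the inductive hypothesis. Once these technical choices are correctly made, and the number of colors at each preceding level is inflated to absorb the branching produced by the amalgamation, the reverse induction reduces to a bookkeeping exercise.
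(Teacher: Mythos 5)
Your construction cannot deliver the conclusion, and the gap sits exactly where you defer to ``bookkeeping''. In $\Pi_{k+1}$, each copy of $\m B$ that you glue onto a copy of $\m A$ meets $\Pi_k$ only in its seam (the transversal playing the role of $\m A^{k+1}$); every other copy of $\m A$ inside that glued $\m B$ lives on brand-new vertices private to it. Hence the colours of the non-seam types are independent from glued copy to glued copy, and neither pigeonhole over the seams nor Lemma \ref{lem:blob} can force them to agree with one another or with the seam's colour: the most you can extract is a subgrid of $\Pi_0$ on which the colour \emph{pattern} $(c_1,\dots,c_M)$ of the glued copies is constant, and nothing makes $c_1=\dots=c_M$. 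The base case has the same defect: $\Pi_0$ is uniformly oriented, so (unless $\m B$ itself is uniform) it contains no copy of $\m B$ at all, and no copy of $\m B$ can have all of its copies of $\m A$ inside the uniform grid, since distinct copies of $\m A$ in $\m B$ are in general joined by edges of both orientations; so there is nothing for the product Ramsey property to ``close'' at $k=0$. A colouring that simply distinguishes copies of $\m A$ lying entirely in $\Pi_0$ from those using new vertices already defeats every witness your induction is designed to produce, since a stage-one glued copy of $\m B$ necessarily receives both colours.

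What is missing is precisely the Hales--Jewett theorem, which is the engine of the proof the paper invokes (the partite lemma of \cite{key-NR}): take $N$ to be a Hales--Jewett number for the alphabet $\binom{\m B}{\m A}$ and $r$ colours, let the $i$-th part of $\m C$ be the $N$-th power of the $i$-th part of $\m B$, and put a directed edge between two tuples only when every coordinate pair is an edge of $\m B$ with the same orientation (so $\m C$ is $n$-partite but not complete, which is all the lemma asserts --- your insistence on staying inside $\age(n*I_{\omega}^{*})$ creates the spurious-copy problem gratuitously). Copies of $\m A$ in this $\m C$ correspond to words over the alphabet, combinatorial lines give copies of $\m B$, and a monochromatic line is the desired monochromatic copy; it is the line structure, in which a single copy of $\m A$ is shared by many copies of $\m B$ in all possible roles, that forces the different types to share a colour --- exactly what free amalgamation destroys. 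Your plan instead mirrors the partite construction used for Lemma \ref{lem:partite2}, but that construction consumes the present lemma at every amalgamation step; replacing it by pigeonhole makes the argument circular rather than elementary. Lemma \ref{lem:blob} is the right tool only in situations where every transversal is a copy of $\m A$, which is precisely the situation this lemma exists to go beyond.
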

\begin{proof}
This is identical to the proof of the partite lemma in \cite{key-NR}.
\end{proof}

Suppose now that $\m A$ is a directed subgraph of $\m B$, and that the
parts of $\m B$ are written $Y_{i}$ with $i\in [n]$. Define the
\emph{trace} of $\m A$ (in $\m B$) to be
\[
\tr(\m A)=\{i:A\cap Y_{i}\neq\emptyset\}.
\]
For $x\in B$, define the trace of $x$ to be $tr(x)=\tr(\{x\})$, thus $x\in Y_{\tr(x)}$.

We begin with the following special case of the Ramsey property. 

\begin{lem}\label{lem:partite2} 
Let $n,r\in\N$ with $r>0$, suppose $\m A \leq \m B$ are finite
substructure of $n*I_{\omega}^{*}$ with exactly $n$ parts each.  Then there
exists a finite $\m C \subseteq n*I_{\omega}^{*}$ such that
\[
\m C\rightarrow(\m B)_{r}^{\m A}.
\]
\end{lem}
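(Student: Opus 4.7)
The plan is to apply the Nešetřil-Rödl partite construction, reducing to the transversal case treated by Lemma \ref{lem:partite}. The key idea is to refine the $n$-partition so that $\m A$ becomes transversal in a finer picture, then iterate the partite lemma over all embeddings of $\m A$ into $\m B$.

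First, I would enumerate ${\m B \choose \m A} = \{\tilde{\m A}^1, \ldots, \tilde{\m A}^t\}$ and refine the $n$-partition of $\m B$ into a $|B|$-partition with one part per vertex. In this refined picture, each $\tilde{\m A}^k$ occupies exactly $|A|$ of the $|B|$ parts, one vertex per part, so is transversal. The refinement is compatible with $n*I_{\omega}^{*}$: no edges exist between refined parts corresponding to the same $P_i$, since $n*I_{\omega}^{*}$ has no intra-part edges.

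Next, I would build a sequence of $|B|$-partite structures $\m P^0 \subseteq \m P^1 \subseteq \ldots \subseteq \m P^t$ in this refined class, with $\m P^0$ consisting of sufficiently many disjoint copies of $\m B$ arranged so that copies of each $v \in B$ populate the part $P_v$. At stage $k$, I would apply Lemma \ref{lem:partite} to the $|A|$ parts corresponding to $\tilde{\m A}^k$, using the standard nesting trick: produce $\m P^k$ such that any $r$-coloring of transversal type-$k$ copies of $\m A$ admits a monochromatic sub-copy of $\m P^{k-1}$ rather than just of $\m B$, so that the Ramsey conclusions from earlier stages are preserved. Setting $\m C$ to be the $n$-partite structure obtained by merging parts of $\m P^t$ by their original $P_i$, we have $\m C \subseteq n*I_{\omega}^{*}$, and the arrow $\m C \rightarrow (\m B)_r^{\m A}$ follows by the usual peeling argument: a coloring of copies of $\m A$ in $\m C$ yields, via the stage-$t$ Ramsey property, a monochromatic copy of $\m P^{t-1}$ on type-$t$ copies, and iterating downward through each stage produces a copy of $\m B$ monochromatic on all $t$ types.

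The main obstacle is the technical bookkeeping: one must verify that every copy of $\m A$ in the merged $\m C$ corresponds to some type $\tilde{\m A}^k$ in the refined picture (which needs care if $\m A$ admits non-trivial automorphisms that could identify multiple vertices into a single refined part), and one must ensure that throughout the iterative construction no edges are introduced between parts slated to merge, so that $\m C$ indeed lies in $n*I_{\omega}^{*}$.
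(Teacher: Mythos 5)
Your overall strategy (reduce to the transversal case of Lemma \ref{lem:partite} by passing to a finer, vertex-indexed partition and iterating) is the same partite-construction scheme the paper uses, but there is a genuine gap at the final step: your picture zero cannot deliver a monochromatic copy of $\m B$. In your $\m P^0$ every copy of $\m B$ is spread identically over the same $|B|$ refined parts (vertex $v$ always in part $P_v$), so every copy of $\m B$ contains, for each type $\tilde{\m A}^k$, a copy of $\m A$ with that trace. After the downward peeling you only know that the colour of a copy of $\m A$ depends on its type; different types $\tilde{\m A}^k$ may still carry different colours, and since each copy of $\m B$ in $\m P^0$ meets all $t$ types, none of them need be monochromatic. ``Monochromatic on each type separately'' is weaker than what the arrow $\m C \rightarrow (\m B)^{\m A}_{r}$ demands, and nothing in your construction forces the colours of distinct types to agree.

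The paper closes exactly this hole by building picture zero out of the product Ramsey statement of Lemma \ref{lem:blob}: the refined (non-structural) parts are indexed by disjoint sets $C_0,\ldots,C_{n-1}$ with $(C_{0},\ldots,C_{n-1})\rightarrow(b_{0},\ldots,b_{n-1})_{r}^{(a_{0},\ldots,a_{n-1})}$, the copies of $\m B$ in $\m Q^0$ are indexed by \emph{all} members of $\binom{C_{0},\ldots,C_{n-1}}{b_{0},\ldots,b_{n-1}}$ (so they occupy varying sets of refined parts), and the iteration of Lemma \ref{lem:partite} runs over all traces in $\binom{C_{0},\ldots,C_{n-1}}{a_{0},\ldots,a_{n-1}}$, not just over the vertex sets of copies of $\m A$ inside $\m B$ (this also takes care of copies of $\m A$ created by the edge-completion step, whose traces need not support a copy of $\m A$ in $\m B$). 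Then, once colours depend only on the trace, the trace colouring is a colouring of the product grid, and the choice of the $C_i$'s yields sets $B_i\subseteq C_i$ of sizes $b_i$ on which it is constant; the copy of $\m B$ indexed by that subgrid is genuinely monochromatic. Some analogous ``second-level'' Ramsey ingredient acting on traces is indispensable, and your proposal omits it.
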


\begin{proof}
What follows is a modification of the partite construction in
\cite{key-NR}.  Let $\m A$ and $\m B$ be as above, and denote by
$Y_{i}$, with $i<n$, the parts of $\m B$. Set $a=|A|$, $b=|B|$,
$a_{i}=|A\cap Y_{i}|$, and $b_{i}=|B\cap Y_{i}|$ for $i<n$. By Lemma
\ref{lem:blob}, there exist disjoint $C_{i}\subseteq\mathbb{N}$ with
$i<n$, such that

\[
(C_{0},\ldots,C_{n-1})\rightarrow(b_{0},\ldots,b_{n-1})_{r}^{(a_{0},\ldots,a_{n-1})}.
\]
We can assume that $C_{0}=\{1,\ldots,|C_{0}|\}$ and in general, \[C_{i}=\{\max(C_{i-1})+1,\ldots,\max(C_{i-1})+|C_i|\}.\]  Set $p=\max(C_{n-1})$. Enumerate
\[
{C_{0},\ldots,C_{n-1} \choose a_{0},\ldots,a_{n-1}}=\{N_{1},\ldots,N_{\alpha}\},
\]
 and 
\[
{C_{0},\ldots,C_{n-1} \choose b_{0},\ldots,b_{n-1}}=\{M_{1},\ldots,M_{\beta}\}.
\]
Now for every $j\in[\beta]$, $M_j$ is a disjoint union of $A_k^j$'s such that $A_k^j = M_j \cap C_k$, for every $k<n$. Construct a substructure $\m Q^{0}$ of $n*I_{\omega}^{*}$ as follows:
the vertex set $Q^{0}$ consists of all pairs $(i,j)$ such that $i\in M_{j}$. For each $j\in[\beta]$, reproduce a copy ${\m B}_j$ of $\m B$ with universe $\{(i,j): i\in M_{j}\}$, so that $P_{k}^{{\m B}_j}=B_j\cap C_k$ (for every $k<n$) and $(i,j)<^{{\m B}_j}(i',j)$ if and only if $i<i'$.   To complete our set of edges, for every $i<i'$ and $j<j'$, if $i\in A_k^j$ and $i'\in A_{k'}^{j'}$ where $k<k'$, then we add $((i,j),(i',j'))$. For every $k\in [n]$, $P_{k}^{{\m Q}^{0}}$ is the disjoint union of $P_{k}^{{\m B}_j}$'s.  Moreover, we set each non-structural part to be
\[
\mathcal{M}_i = \{(i,j):j\in[\beta]\}
\]
for $i\in [p]$.
Finally, add a linear ordering (convex with respect to both partitions) that extends the linear orderings that already exist on each copy of $\m B$: If $j<j'$, then set $(i,j)<^{{\m Q}^{0}} (i,j')$.   If $i<i'$, $j\neq j'$, then set  $(i,j)<^{{\m Q}^{0}} (i',j')$. The resulting structure is $\m Q ^{0}$. 

Next, we construct a sequence $\m Q^{1},\ldots$, $\m Q^{\alpha}$
inductively using lemma \ref{lem:partite2}, as in \cite{key-NR}. To
construct $\m Q ^{i}$ from $\m Q ^{i-1}$, we first consider the
directed subgraph of $\m Q ^{i-1}$ whose trace is $N^{i-1}$ with
respect to the non-structural partition, call it $\m
H^{i-1}$. Applying lemma \ref{lem:partite2}, we get an $a$-partite $\m
G^{i-1}$ such that
\[
\m G^{i-1}\rightarrow(\m H^{i-1})_{r}^{\m A}.
\]
Subsequently, we expand each copy of $\m H^{i-1}$ to a copy of $\m
Q^{i-1}$, and add a convex (with respect to the structural partition)
linear ordering that extends all the linear orderings that already
exist on each copy of $\m Q^{i-1}$. We call the result $\m
Q^{i}$. Following this procedure, the directed graph induced by the
structure $\m Q^{\alpha}$ is not complete with respect to the
structural partition, but it can be made complete by adding new
edges. Let $\m C$ be any structure obtained that way. We claim that
$\m C$ is as required.

Let $\chi:{\m C \choose \m A}\rightarrow [r]$ be any colouring. Then
there will be $\m Q'\in{\m C \choose \m Q ^{0}}$ such that the
colouring of each copy of $\m A$ depends only on its trace with
respect to the non-structural partition.  The conclusion follows from
the construction of $\m Q^{0}$ now using routine arguments.
\end{proof}

We are now ready to prove the general result.

\begin{thm}
\label{thm:partite}
The age of $n*I_{\omega}^{*}$ has the Ramsey property for each $n\in
\N$.
\end{thm}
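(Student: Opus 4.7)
The plan is to reduce the general case to Lemma \ref{lem:partite2}, which handles the special situation where both $\m A$ and $\m B$ use all $n$ parts. Let $\m A \subseteq \m B$ be finite substructures in $\age(n*I_\omega^*)$ with traces $T_\m A \subseteq T_\m B \subseteq [n]$ (where the trace records which parts $P_i$ are non-empty). The obstruction is that $\m A$ and $\m B$ may miss some parts, so Lemma \ref{lem:partite2} is not directly applicable.

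The first main step would be to fix an extension $\m A^+$ of $\m A$ with trace $[n]$, obtained by adding a single vertex in each part indexed by $[n] \setminus T_\m A$, with an arbitrary consistent directed edge pattern to the original vertices (this is allowed since the Fra\"iss\'e class of complete $n$-partite directed graphs is closed under such amalgamations). Similarly, I would pad $\m B$ to a structure $\m B^0$ with full trace. Then I would enlarge $\m B^0$ to a finite $\m B^+ \in \age(n*I_\omega^*)$ so that each copy of $\m A$ in $\m B^+$ extends to a copy of $\m A^+$ inside $\m B^+$. The crucial observation that makes this one-shot construction go through is that every newly added vertex lies in a part indexed by $[n] \setminus T_\m A$, hence cannot participate in any copy of $\m A$; consequently $\binom{\m B^+}{\m A} = \binom{\m B}{\m A}$, and it suffices to add, for each such copy, the required missing vertices realizing the $\m A^+$-pattern.

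The second main step is the Ramsey transfer. I would apply Lemma \ref{lem:partite2} to the pair $\m A^+ \leq \m B^+$ (both having exactly $n$ parts) to obtain $\m C \subseteq n*I_\omega^*$ with $\m C \rightarrow (\m B^+)_r^{\m A^+}$. Given a colouring $\chi : \binom{\m C}{\m A} \to [r]$, I would define an auxiliary colouring $\chi^+ : \binom{\m C}{\m A^+} \to [r]$ by setting $\chi^+(\m A^{+\prime}) = \chi(\m A^\prime)$, where $\m A^\prime \subseteq \m A^{+\prime}$ is the unique substructure living on the parts indexed by $T_\m A$ (this is well-defined since embeddings preserve the unary predicates $P_i$). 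A $\chi^+$-monochromatic copy $\m B^{+\prime}$ of $\m B^+$ in $\m C$ contains a canonical copy $\m B^\prime$ of $\m B$. For any two copies $\m A_1, \m A_2 \in \binom{\m B^\prime}{\m A}$, transporting the extension property of $\m B^+$ via the isomorphism $\m B^+ \cong \m B^{+\prime}$ yields extensions $\m A_i^+ \in \binom{\m B^{+\prime}}{\m A^+}$, so $\chi(\m A_1) = \chi^+(\m A_1^+) = \chi^+(\m A_2^+) = \chi(\m A_2)$, completing the proof.

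The main obstacle I anticipate is constructing $\m B^+$ so that every copy of $\m A$ extends inside it to a copy of $\m A^+$. This would fail in a general Fra\"iss\'e class because adding new vertices might create many new copies of $\m A$ that each require their own extensions; here, however, the partite structure saves the day, because confining all new vertices to parts outside $T_\m A$ freezes the set of $\m A$-copies. Once this is in place, the rest of the argument is a standard colour-transfer through the Ramsey statement for $\m A^+$.
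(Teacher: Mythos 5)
Your argument is correct, and it reaches the paper's theorem by a route that is recognizably dual to the one the paper takes, though both rest on the same two ingredients: Lemma \ref{lem:partite2} and the observation that, because the parts of $n*I_{\omega}^{*}$ are named by unary predicates, every copy of $\m A$ has the same trace, so vertices added in parts outside that trace can never create new copies of $\m A$. The paper goes \emph{downward}: it restricts $\m B$ to the parts in the trace $\tau$ of $\m A$, applies Lemma \ref{lem:partite2} to the pair $(\m A,\bar{\m B})$ on $|\tau|$ parts to get $\bar{\m C}$, and only then extends each copy of $\bar{\m B}$ inside $\bar{\m C}$ back to a copy of $\m B$ by amalgamation, taking $\m C$ to be the result; the frozen-trace observation guarantees the newly added vertices carry no copies of $\m A$, so the colour transfer is immediate. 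You go \emph{upward}: you pad $\m A$ and $\m B$ to full-trace structures $\m A^{+}\leq\m B^{+}$, arranging (again by amalgamation, and again safely, since the added vertices live outside $T_{\m A}$ and hence $\binom{\m B^{+}}{\m A}=\binom{\m B}{\m A}$) that every copy of $\m A$ in $\m B^{+}$ extends to a copy of $\m A^{+}$, and then apply Lemma \ref{lem:partite2} directly to $(\m A^{+},\m B^{+})$ on all $n$ parts, transferring the colouring from $\m A$ to $\m A^{+}$ by restriction. The practical difference is where the amalgamation work is done: the paper performs it on the (large) Ramsey witness $\bar{\m C}$ after the partition theorem, whereas you perform it once on the finite structure $\m B^{+}$ before invoking the partition theorem, so the output of Lemma \ref{lem:partite2} can be used verbatim; this makes your colour-transfer step slightly more explicit than the paper's rather terse final sentence, at the cost of the extra bookkeeping needed to check that the extensions $\m A_i^{+}$ exist inside the monochromatic copy and restrict back to the original copies of $\m A$ (which you do verify, including the point that convexity of the order and the single new vertex per missing part make the expanded isomorphism type of each extension independent of its placement).
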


\begin{proof} 
Let $k\leq n, r\in\mathbb{N}$ with $r>0$, $\m B$ a finite substructure
of $n*I_{\omega}^{*}$ with exactly $n$ parts, and $\m A$ is a
substructure of $\m B$ with $k$ parts. Then we will show that there exists a finite $\m
C \subseteq n*I_{\omega}^{*}$ with $n$ parts such that
\[
\m C\rightarrow(\m B)_{r}^{\m A}.
\]
Note that because of the unary relations that are interpreted as the
parts of $n*I_{\omega}^{*}$, all copies of $\m A$ in $\m B$ have same
trace $\tau$. Let $\mathbf{\bar{B}}$ be the substructure of $\m B$
induced by the points $v \in B$ with $\tr(v)\in\tau$.  Apply Lemma
\ref{lem:partite2} to find $\mathbf{\bar{C}}$ such that
$\mathbf{\bar{C}} \rightarrow (\mathbf{\bar{B}})^{\m A}_{r}$.  
$\sigma$ be the trace of $\m B$.  Now extend each copy of
$\mathbf{\bar{B}}$ in $\mathbf{\bar{C}}$ to a copy of
$\mathbf{\bar{B}}$, via amalgamation.  Set $\mathbf{C}$ to be the result of this
extension with appropriate edges and linear ordering added.
\end{proof}

%
%
%
%
%

This result allows us to show the  same is true for the infinite partite structure.

\begin{thm}
\label{thm:partite2}
The age of $\omega*I_{\omega}^{*}$ has the Ramsey property. 
\end{thm}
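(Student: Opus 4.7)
The plan is to mirror the strategy used in Section \ref{section:IomegaT} to pass from Theorem \ref{thm:finite1} to Theorem \ref{thm:infinite2-3}: reduce the $\omega$-partite problem to the finite-partite Ramsey result (Theorem \ref{thm:partite}) through a combination of a trace analysis and the classical finite Ramsey theorem.

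Let $\m A \subseteq \m B$ be finite substructures of $\omega * I_{\omega}^{*}$ with $k$ and $n$ parts respectively, and let $r \in \N^{+}$. First I would apply the classical Ramsey theorem for $k$-subsets of an $N$-set to obtain $N$ with $N \rightarrow (n)^{k}_{r}$. Next, I would build an auxiliary $\hat{\m B} \in \age(\omega * I_{\omega}^{*})$ having exactly $N$ parts, rich enough that for every $n$-subset of its parts, the substructure induced by those parts contains a copy of $\m B$. Such $\hat{\m B}$ exists by iterated amalgamation inside $\age(\omega * I_{\omega}^{*})$, starting from $\m B$ and adding parts one at a time while using the generic bipartite edge patterns available in $\omega * I_{\omega}$.

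Then I would view $\hat{\m B}$ as a member of $\age(N * I_{\omega}^{*})$ by naming its $N$ parts with unary predicates, and for each $k$-subset $\tau \subseteq [N]$ let $\m A_{\tau}$ denote the labelled version of $\m A$ whose trace is $\tau$. Enumerate all $k$-subsets $\tau_{1}, \ldots, \tau_{\binom{N}{k}}$ of $[N]$ and iterate Theorem \ref{thm:partite}: starting from $\m C_{0} := \hat{\m B}$, successively choose $\m C_{i+1} \in \age(N * I_{\omega}^{*})$ so that $\m C_{i+1} \rightarrow (\m C_{i})^{\m A_{\tau_{i+1}}}_{r}$. The final structure $\m C^{+} := \m C_{\binom{N}{k}}$ has the property that any $r$-coloring of $\binom{\m C^{+}}{\m A}$ restricts, on some copy of $\hat{\m B}$ inside $\m C^{+}$, to a coloring that depends only on the trace $\tau$ of the copy.

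Finally, the induced $r$-coloring of $\binom{[N]}{k}$ admits, by $N \rightarrow (n)^{k}_{r}$, a monochromatic $n$-subset $\sigma \subseteq [N]$; the substructure of the chosen subcopy of $\hat{\m B}$ supported on the parts indexed by $\sigma$ contains a copy of $\m B$ (by the construction of $\hat{\m B}$), and all $\m A$-copies inside it share the same color. Taking $\m C$ to be the reduct of $\m C^{+}$ to the language of $\omega * I_{\omega}^{*}$ (i.e., forgetting the $N$ unary predicates) yields the required $\m C$ with $\m C \rightarrow (\m B)^{\m A}_{r}$. The main obstacle is the construction of $\hat{\m B}$, since one needs every $n$-subset of its parts to already contain a copy of $\m B$ despite the constraints on directed edges between parts; this requires a careful amalgamation argument within $\age(\omega * I_{\omega}^{*})$, adding parts one at a time and checking at each stage that the required copies of $\m B$ still embed.
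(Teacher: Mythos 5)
Your proposal is correct and follows essentially the same route as the paper: label the $N$ parts with unary predicates, use the finite-partite Ramsey theorem (Theorem \ref{thm:partite}) to stabilize the colouring so that it depends only on the trace, and then apply the classical Ramsey theorem $N\rightarrow(n)^{k}_{r}$ to the traces. The only real difference is that the paper sidesteps what you call the main obstacle: instead of building a rich $\hat{\m B}$ by a careful amalgamation, it simply takes $\bar{\m B}$ to be a union of pairwise disjoint copies $\m B_{i}$ of $\m B$, one with trace $\tau_{i}$ for each $n$-subset $\tau_{i}$ of $[N]$ (which already has exactly the property you need, since edges between parts in $\omega*I_{\omega}$ are unconstrained), so that step is immediate rather than delicate.
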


\begin{proof}

Let $k\leq n, r\in\mathbb{N}$ with $r>0$, $\m B$ a finite substructure
of $\omega*I_{\omega}^{*}$ with $n$ parts, and $\m A$  a
substructure of $\m B$ with $k$ parts.

Expand for the moment $\omega*I_{\omega}^{*}$ with unary predicates
corresponding to its parts, calling the resulting structure
$\omega*I_{\omega}^{**}$. Using Ramsey's theorem, we can find $N$ such
that $N\rightarrow(n)^{k}_{r}$, and we enumerate the $n$-subsets of $[N]$ as
$\tau_1, \ldots, \tau_\ell$. For each $i\leq l$ consider a copy $\m
B_{i}$ of $\m B$ with trace $\tau_{i}$ in $\omega*I_{\omega}^{*}$ so
that the $\m B_{i}$'s are pairwise disjoint.  Set $\mathbf{\bar{B}}$
to be the substructure of $\omega*I_{\omega}^{*}$ supported by the
union of these $\m B_{i}$'s.

Note that all copies of $\m A$ with fixed trace in
$\omega*I_{\omega}^{*}$ induce isomorphic structures in
$\omega*I_{\omega}^{**}$. Using Theorem \ref{thm:partite}, we can
find a finite substructure $\m C$ of $\omega*I_{\omega}^{*}$ such that
every $r$-colouring $\chi$ of ${\m C \choose \m A}$, there exists a
copy $\mathbf{\bar{B}}' \in {\m C \choose
\mathbf{\bar{B}}}$ such that $\chi$ restricted to ${\mathbf{\bar{B}}' \choose \m A}$
depends only on the trace.

To complete the proof , define $\bar{\chi} : {N \choose k}
\rightarrow [r]$ by setting $\bar{\chi}(\sigma)$ to be the colour of any
copy of $\m A$ in $\mathbf{\bar{B}}'$ with trace $\sigma$; this is well defined
by construction.  By the choice of $N$, there is a
$\bar{\chi}$-monochromatic $\tau_{i_0}$, and by construction, there is
$\m B'\in{ \mathbf{\bar{B}}' \choose \m B}$ with trace $\tau_{i_0}$, which completes
the proof.
\end{proof}


%
%
%

We now more to the expansion property, and note that the proof of the
theorem below follows the template found in the proof of Lemma 5.2 in
\cite{key-N} and the proofs of the expansion properties found in
\cite{key-S2}.

\begin{thm}
\label{thm:exp7}The age of $\omega*I_{\omega}^{*}$ satisfies the
expansion property with respect to the age of $\omega*I_{\omega}$.
\end{thm}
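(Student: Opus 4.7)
Given $\mathbf{A}\in\age(\omega*I_{\omega})$ with parts $A_{0},\ldots,A_{n-1}$ of sizes $a_{i}=|A_{i}|$, the plan is to build $\mathbf{B}\in\age(\omega*I_{\omega})$ with exactly $n$ parts such that every convex expansion $\mathbf{A}^{*}$ of $\mathbf{A}$ embeds in every convex expansion $\mathbf{B}^{*}$ of $\mathbf{B}$. Following the template of the preceding sections, the strategy is to package, inside a single element of $\age(\omega*I_{\omega})$, one copy of $\mathbf{A}$ for every possible ordering of its parts, so that the choice of expansion on $\mathbf{B}$ becomes irrelevant.

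Concretely, for each $\phi\in S_{n}$ I would take a disjoint copy $\mathbf{A}_{\phi}$ of $\mathbf{A}$ and relabel its parts as $A^{0}_{\phi},\ldots,A^{n-1}_{\phi}$ with $A^{i}_{\phi}$ corresponding to $A_{\phi^{-1}(i)}$; in particular $|A^{i}_{\phi}|=a_{\phi^{-1}(i)}$. Define $\mathbf{B}$ to have universe $\bigsqcup_{\phi\in S_{n}}A_{\phi}$, parts $B_{i}:=\bigsqcup_{\phi\in S_{n}}A^{i}_{\phi}$, and edges as already present inside each $\mathbf{A}_{\phi}$ together with arbitrarily oriented directed edges between pairs of elements that lie in distinct copies $\mathbf{A}_{\phi},\mathbf{A}_{\phi'}$ and in distinct parts $B_{i},B_{j}$. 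The non-edge relation on $\mathbf{B}$ then has exactly the $B_{i}$'s as equivalence classes, so $\mathbf{B}\in\age(\omega*I_{\omega})$.

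To verify the expansion property, fix expansions $\mathbf{A}^{*}$ with part ordering $A_{\pi(0)}<\cdots<A_{\pi(n-1)}$ and $\mathbf{B}^{*}$ with part ordering $B_{\sigma(0)}<\cdots<B_{\sigma(n-1)}$, and set $\phi:=\sigma\circ\pi^{-1}$. Map each $A_{j}$ onto $A^{\phi(j)}_{\phi}\subseteq B_{\phi(j)}$ by the unique order-isomorphism (both sides are linear orders of size $a_{j}$, by the choice of relabelling). The edges between any two parts $A_{j}$ and $A_{k}$ are preserved because the image sits entirely inside the single copy $\mathbf{A}_{\phi}\cong\mathbf{A}$, where the combinatorics of $\mathbf{A}$ is reproduced exactly. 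The part carrying the image of $A_{j}$, namely $B_{\phi(j)}=B_{\sigma(\pi^{-1}(j))}$, occupies rank $\pi^{-1}(j)$ in $\mathbf{B}^{*}$, which coincides with the rank of $A_{j}$ in $\mathbf{A}^{*}$; hence the embedding is both part-wise order-preserving and convex with respect to $<^{\mathbf{B}^{*}}$.

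The proof is essentially bookkeeping, and the only genuine subtlety is choosing the relabelling $A^{i}_{\phi}\leftrightarrow A_{\phi^{-1}(i)}$ (rather than $A_{\phi(i)}$) together with the permutation $\phi=\sigma\circ\pi^{-1}$ so that the rank computation $\sigma(\pi^{-1}(j))=\phi(j)$ forces the images of the parts to land in the right positions of $\mathbf{B}^{*}$. Once the indices are aligned no further Ramsey-type or amalgamation argument is needed, and the argument is a direct analogue of the expansion proofs for $I_{n}[\mathbf{T}]$ and $I_{\omega}[\mathbf{T}]$ given in the preceding sections.
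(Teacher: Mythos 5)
There is a genuine gap, and it is exactly at the step you call ``bookkeeping''. Your embedding sends $A_{j}$ into the part $A^{\phi(j)}_{\phi}$ of the pre-packed copy $\mathbf{A}_{\phi}$ via \emph{the unique order-isomorphism} between $(A_{j},<^{\mathbf{A}^{*}})$ and $(A^{\phi(j)}_{\phi},<^{\mathbf{B}^{*}})$. But the expansions of $\mathbf{B}$ are \emph{arbitrary} convex linear orderings: $<^{\mathbf{B}^{*}}$ may order the elements inside $A^{\phi(j)}_{\phi}$ in any way relative to the copy identification $c:\mathbf{A}\cong\mathbf{A}_{\phi}$. Writing $f$ for your map, the composite $c^{-1}\circ f$ is then an arbitrary part-preserving permutation of $A$, and the directed edges of $\mathbf{A}$ between two parts are in general not invariant under such permutations; hence $f$ need not preserve $E$. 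Concretely, take $\mathbf{A}$ with parts $\{u_{1},u_{2}\}$, $\{v_{1},v_{2}\}$ and edges $u_{1}\rightarrow v_{1}$, $v_{2}\rightarrow u_{1}$, $u_{2}\rightarrow v_{1}$... choose any pattern in which $u_{1}$ and $u_{2}$ have different out-neighbourhoods; if $\mathbf{A}^{*}$ orders $u_{1}<u_{2}$ while $<^{\mathbf{B}^{*}}$ happens to order the images $c(u_{2})\prec c(u_{1})$ (and keeps the $v$-part aligned), your order-isomorphism swaps $u_{1},u_{2}$ and the edge relation is destroyed. The analogy with $I_{n}[\mathbf{T}]$ and $I_{\omega}[\mathbf{T}]$ breaks down precisely here: in those structures there are no edges between distinct parts, so scrambling a part internally is harmless, whereas for $\omega*I_{\omega}$ the cross-part edges are the whole content of the structure. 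Packing more copies (one per convex ordering of $A$, not just per permutation of parts) does not repair this either, since the adversary chooses $<^{\mathbf{B}^{*}}$ \emph{after} $\mathbf{B}$ is fixed and can misalign every pre-packed copy simultaneously, and your ``arbitrarily oriented'' edges between different copies give you no diagonal copies to fall back on.

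This is why the paper's proof is not a direct packing argument but a Ramsey argument: it takes a reference ordering $<'$ on a large ordered structure $\mathbf{C}_{2}$ obtained from the Ramsey property of $\age(\omega*I_{\omega}^{*})$ applied to the two-element structures $\mathbf{E}_{1}$ (an edge) and $\mathbf{E}_{2}$ (a non-edge), colours each pair according to whether an arbitrary convex ordering $\prec$ agrees with $<'$ on it, and extracts a copy of a structure $\mathbf{A}_{2}$ on which $\prec$ relates to $<'$ uniformly on edges and uniformly on non-edges. That leaves only four possible behaviours of $\prec$, which are absorbed by pre-packing the four variants $(\mathbf{A},<)$, $(\mathbf{A},>)$, $(\mathbf{A},<_{\mathrm{rev}})$, $(\mathbf{A},>_{\mathrm{rev}})$ into $\mathbf{A}_{1}$. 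Some Ramsey-type input of this kind (or an equivalent device to tame how an arbitrary expansion interacts with the generic cross-part edges) is unavoidable, so your proposal as it stands does not prove the theorem.
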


\begin{proof}
Let $\mathbf{A}\in\age(\omega*I_{\omega})$, and fix an arbitrary a convex linear ordering
$<$ on $A$. We will produce a structure  $\mathbf{B}\in\age(\omega*I_{\omega})$ such that 
 $\mathbf{A}^*$ embeds in any expansion of $\mathbf{B}$.

Consider the following structures:
\begin{itemize}
\item $\mathbf{E}_{1}$ is the linearly ordered directed graph with universe $\{a,b\}$ and
the directed edge $(a,b)$ such that $a<b$.
\item $\mathbf{E}_{2}$ is the linearly ordered directed graph with universe $\{a,b\}$ and 
no edges such that $a<b$.
\end{itemize}
Set $<_\mathrm{rev}$ to be the same as $<$ but with the ordering on
each part (i.e., the ordering on non-edges) reversed.  Set
$>_\mathrm{rev}$ to be the same as $>$ but with the ordering on each
part reversed.  \\ Now find $\mathbf{A}_1\in\age(\omega*I_{\omega}^{*})$
so that $(\mathbf{A},<)$, $(\mathbf{A},>)$,
$(\mathbf{A},<_\mathrm{rev})$ and $(\mathbf{A},>_\mathrm{rev})$ embed
in $\mathbf{A}_1$, and similarly find
$\mathbf{A}_2\in\age(\omega*I_{\omega}^{*})$ so that $\mathbf{A}_1$,
$\mathbf{E}_{1}$ and $\mathbf{E}_{2}$ all embed in $\mathbf{A}_2$. \\
Since $\age(\omega*I_{\omega}^{*})$ satisfies the Ramsey property, we
can find $\mathbf{C}_{i}\in\age(\omega*I_{\omega}^{*})$ so that
$\mathbf{C}_{1}\rightarrow(\mathbf{A}_2)_{2}^{\mathbf{E}_{1}}$ and 
$\mathbf{C}_{2}\rightarrow(\mathbf{C}_{1})_{2}^{\mathbf{E}_{2}}$. \\
Let $<'$ be the ordering on $\m C _2$, and $\prec$ be any convex linear
ordering on $C_{2}$, and consider the following colouring on ordered
pairs in the universe of $\mathbf{C}_{2}$:\\
\[
\chi((u,v))=\begin{cases}
0 & \text{if the two orders $<'$ and $\prec$ agree on $\{u,v\}$.} \\
1 & \text{otherwise}
\end{cases}
\]
This colouring induces colourings $\chi_i$ on the copies of
$\mathbf{E}_{i}$ in $\mathbf{C}_{2}$ for each $i$. By the above Ramsey
construction, there is $\mathbf{B}\in{\mathbf{C}_{2} \choose
\mathbf{A}_2}$ which is monochromatic with respect to each $\chi_{i}$.  In other
words, we have the following possibilities:
\begin{enumerate}
\item The ordering $\prec$ on $\mathbf{B}$ agrees with $<'$. 
\item The ordering $\prec$ on $\mathbf{}$ agrees with $<'$ 
on edges and goes in the opposite direction on non-edges.
\item The ordering $\prec$ on $\mathbf{B}$ agrees with $<'$ 
on non-edges and goes in the opposite direction on edges.
\item The ordering $\prec$ on $\mathbf{B}$ goes in the opposite direction.
\end{enumerate}
Now, since $\mathbf{A}_1$ embeds in $\mathbf{B}$, the structure
$(\mathbf{A},<)$ must embed in $\mathbf{B}$ with the order $<'$
replaced by $\prec$ and the proof is complete.
\end{proof}

\begin{thm}
\label{thm:exp8} The age of $n*I_{\omega}^{*}$ satisfies the expansion
property with respect to the age of $n*I_{\omega}$.
\end{thm}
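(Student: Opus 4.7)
The proof adapts the template of Theorem~\ref{thm:exp7} to the labeled-part setting of $n*I_\omega^*$. Given $\mathbf{A} \in \age(n*I_\omega)$, an expansion in $\age(n*I_\omega^*)$ is specified by (i) an injection $\pi$ from the parts of $\mathbf{A}$ into $\{0,\ldots,n-1\}$ assigning labels $P_{\pi(j)}$ to each part, and (ii) linear orderings within each part. These expansions are finite in number; enumerate them as $\mathbf{A}^*_1,\ldots,\mathbf{A}^*_m$, and by iterated use of the joint embedding property of $\age(n*I_\omega^*)$ find $\mathbf{A}_0 \in \age(n*I_\omega^*)$ containing each as a substructure. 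It then suffices to produce $\mathbf{B} \in \age(n*I_\omega)$ such that $\mathbf{A}_0$ embeds in every expansion $\mathbf{B}^*$ of $\mathbf{B}$; composing with $\mathbf{A}^*_j \hookrightarrow \mathbf{A}_0$ yields the required embeddings.

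The key intermediate is a structure $\mathbf{A}_0' \in \age(n*I_\omega)$ built by amalgamating (via the AP of $\age(n*I_\omega)$) copies of the reduct of $\mathbf{A}_0$, one per permutation $\tau \in S_n$, arranged so that $\mathbf{A}_0'$ contains a sub-copy of $\mathbf{A}_0$ placing part $P_i^{\mathbf{A}_0}$ into part $R_{\tau(i)}$ of $\mathbf{A}_0'$, where $R_0, \ldots, R_{n-1}$ denote the parts of $\mathbf{A}_0'$. Consequently, for any expansion of $\mathbf{A}_0'$ labelling $R_j$ as $P_{\sigma(j)}$, the sub-copy with $\tau = \sigma^{-1}$ becomes a label-respecting copy of $\mathbf{A}_0$. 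Now follow the template of Theorem~\ref{thm:exp7}: fix a specific expansion of $\mathbf{A}_0'$ and construct $\mathbf{A}_1 \in \age(n*I_\omega^*)$ containing both this expansion and the one obtained by reversing the linear ordering within each part; using the Ramsey property (Theorem~\ref{thm:partite}), find $\mathbf{C} \in \age(n*I_\omega^*)$ with $\mathbf{C} \to (\mathbf{A}_1)^{\mathbf{E}_2}_2$, where $\mathbf{E}_2$ is the two-element ordered within-part non-edge, and set $\mathbf{B}$ to be the reduct of $\mathbf{C}$.

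Verification proceeds as in Theorem~\ref{thm:exp7}. Given any expansion $\mathbf{B}^*$ of $\mathbf{B}$ with ordering $\prec$ and part-assignment $\sigma$, color each within-part pair of $\mathbf{B}$ by whether $\prec$ and $\mathbf{C}$'s ordering agree on it. The Ramsey step produces a copy of $\mathbf{A}_1$ inside $\mathbf{B}$ on which this coloring is monochromatic, inside which either the original expansion of $\mathbf{A}_0'$ (in the agree case) or its within-part reversal (in the disagree case) matches $\prec$. The resulting embedded copy of $\mathbf{A}_0'$ contains, via the sub-copy for $\tau = \sigma^{-1}$, a label-respecting embedding $\mathbf{A}_0 \hookrightarrow \mathbf{B}^*$, hence $\mathbf{A}^*_j \hookrightarrow \mathbf{B}^*$ for every $j$.

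The main obstacle, compared to the $\omega$-case of Theorem~\ref{thm:exp7}, is the flexibility of the $n!$ possible part-assignments $\sigma$ of $\mathbf{B}^*$: a Ramsey argument on cross-part pairs would be degenerate here since the comparison of orderings on such a pair is determined by $\sigma$ alone and carries no per-edge information. The remedy is to absorb these part-assignment variants into $\mathbf{A}_0'$ by an explicit amalgamation at the level of reducts, so that only the within-part ordering degrees of freedom remain for the Ramsey step to handle, mirroring the simpler half of the Theorem~\ref{thm:exp7} argument.
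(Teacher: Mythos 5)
Your overall architecture is close in spirit to the paper's: you absorb the $n!$ possible part-relabelings by amalgamating permuted copies (the paper does the same thing, only downstream, via the structures $\mathbf{C}_f$ and a final joint embedding), and you rely on a Ramsey colouring of ordered within-part pairs to control the linear order. However, there is a genuine gap in the ordering step. In $\age(n*I_{\omega}^{*})$ every element carries one of the unary predicates $P_0,\ldots,P_{n-1}$, so there is no single structure ``$\mathbf{E}_2$ = the two-element ordered within-part non-edge'': there are $n$ of them, $\mathbf{E}^1,\ldots,\mathbf{E}^n$, one per part label, and a copy of $\mathbf{E}^i$ can only sit inside part $i$. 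A single application $\mathbf{C}\rightarrow(\mathbf{A}_1)_{2}^{\mathbf{E}_2}$ therefore controls the comparison of $\prec$ with $\mathbf{C}$'s order only on pairs in one labelled part; on the remaining parts the restriction of $\prec$ relative to $\mathbf{C}$'s order on your copy of $\mathbf{A}_1$ is completely arbitrary (not even agree-or-reverse), and the argument collapses. This is exactly why the paper's proof runs $n$ successive Ramsey steps $\mathbf{C}_{1}\rightarrow(\mathbf{D})_{2}^{\mathbf{E}^1}$, $\mathbf{C}_{i+1}\rightarrow(\mathbf{C}_{i})_{2}^{\mathbf{E}^{i+1}}$, one for each label.

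Even after repairing this by using $n$ colourings, a second problem remains: monochromaticity is obtained part by part, so the outcome can be ``agree with the built-in order'' on some parts and ``reverse it'' on others, with the $2^n$ mixed patterns all possible. Your $\mathbf{A}_1$ contains only two ordered variants of the chosen expansion of $\mathbf{A}_0'$ (the original and the simultaneous reversal on all parts), which does not cover mixed patterns when $n\geq 2$. You need $\mathbf{A}_1$ to contain the variant of the expansion for every per-part reversal pattern --- or, as the paper does, simply every convex linear ordering of the base structure compatible with the fixed part labels --- before applying the $n$ Ramsey steps. With those two corrections (per-label pair structures $\mathbf{E}^i$ with $n$ iterated Ramsey applications, and all $2^n$ per-part reversal variants included in $\mathbf{A}_1$), your relabeling-by-$S_n$-amalgamation device does yield a proof along essentially the paper's lines.
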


\begin{proof}
Let $\mathbf{A}\in\age(n*I_{\omega})$, fix a sequence of relations
$P_i ^{\m A}$ corresponding to each part of $\m A$, and fix first a convex
linear ordering $<$ on $A$ that agrees with these relations. 

For $1\leq i \leq n$, define $\mathbf{E}^{i}\in\age(n*I_{\omega}^{*})$
to be the structure with universe $\{a,b\}$ with no edges and such that
$a<b$ and  $P_i^{\m E^{i}} = \{a,b\}$.

Enumerate all convex orderings on $A$ that agree with $P_i^{\m A}$'s:
$<_1, \ldots, <_o$. Find $\mathbf{B}\in\age(n*I_{\omega}^{*})$ so that
each $(\mathbf{A},P_1^{\m A},\ldots,P_n^{\m A},<_i)$ embeds in
$\mathbf{B}$, for $1\leq i \leq o$. Then find
$\mathbf{D}\in\age(n*I_{\omega}^{*})$ so that $\mathbf{B}$ and all
$E^{i}$'s embed in $\mathbf{D}$. Since $\age(n*I_{\omega}^{*})$
satisfies the Ramsey property, we can find
$\mathbf{C}_{i}\in\age(n*I_{\omega}^{*})$ so that
$\mathbf{C}_{1}\rightarrow(\mathbf{D})_{2}^{\mathbf{E}^1}$,
$\mathbf{C}_{i+1}\rightarrow(\mathbf{C}_{i})_{2}^{\mathbf{E}^{i+1}}$.
Let $<'$ be the ordering on $\m C _n$, and let $\prec$ be any convex
linear ordering on $C_{n}$ that agrees with $P_i^{\m C _n}$'s.

As before consider the following colourings on the copies of $\mathbf{E}^{i}$
in $\mathbf{C}_{n}$:
\[
\chi_{i}(\m E')=\begin{cases}
0 & \text{if the two orders $<'$ and $\prec$ agree on  $E'$.} \\
1 & \text{otherwise}
\end{cases}
\]
By the Ramsey property we can thus find $\mathbf{D}'\in{\mathbf{C}_{n}
\choose \mathbf{D}}$ which is monochromatic with respect to
$\chi_{1}$, $\ldots$, $\chi_{n}$.  In other words, $\prec$ either
agrees with $<'$ or goes in the opposite direction when restricted to
any part.  Furthermore, $\prec$ already agrees with $<'$ on edges,
because we have assumed that $\prec$'s convexity agrees with the
relations $P_i ^{\m C _n}$.  Now, since $\mathbf{B}$ embeds in
$\mathbf{D'}$, the structure $(\mathbf{A},P_1^{\m A},\ldots,P_n^{\m
A},<)$ must embed in $\mathbf{D}'$ with the order $<'$ replaced by
$\prec$.

Let $\m C' $ the reduct of $\m C_n$ resulting from removing $<'$.
Given a bijection $f : [n] \rightarrow [n]$, define $\m C _f$ to be
$\m C '$ except that $P_i^{\m C _f} = P_{f(i)}^{\m C '}$.  Find a
structure $\m C ''$ so that every $\m C _f$ embeds into it.  Set $\m C
_0 \in \age (n*I_\omega)$ to be the reduct of $\m C ''$. For any
$P_1^{\m C _0},\ldots,P_n^{\m C _0}$ and any convex linear order
$<''$, we can find a convex linear order $<'''$ so that \[(\m C ',
P_1^{\m C '},\ldots,P_n^{\m C '}, <''')\] embeds in \[(\m C _0,
P_1^{\m C _0},\ldots,P_n^{\m C _0}, <''),\] by construction.
Therefore, \[(\mathbf{A},P_1^{\m A},\ldots,P_n^{\m A},<)\] embeds in
any expansion \[(\m C _0, P_1^{\m C _0},\ldots,P_n^{\m C _0}, <''),\]
given any $P_1^{\m C_0},\ldots,P_n^{\m C_0}$, and any convex linear
order $<''$.

Finally, to find the witness of the expansion property for $\m A$,
take the joint embedding of all $\m C _0$'s, for all sequences
$P_1^{\m A},\ldots,P_n^{\m A}$ and all convex linear orderings $<$ on
$\m A$.

\end{proof}

\section{$\mathcal{P}(3)$}
\label{section:p3}
Let $\mathcal{P}=(P,\prec)$ be the generic partial order. In this
context, a subset $D$ of $\mathcal{P}$ is \emph{dense} if and only if
for every $x,z\in P$ there is $y\in D$ such that $x\prec y\prec z$ (see \cite{key-C13}).
Partition $P$ into three dense subsets $P_{0}$,$P_{1}$,$P_{2}$. Note
that each $P_{i}$ is isomorphic as a partial order to $\mathcal{P}$.
Define $\mathbb{P}$ to be the directed graph with vertex set $P$
so that $(x,y)$ is an edge if and only if $x\prec y$. Our underlying
signature is $\mathcal{L}=\{E\}$ where $E$ is the binary directed
edge relation. We will temporarily consider extra binary relations $T_i^{X}$, for $i\in\{-1,0,1\}$ and a set of pairs $X$.  For a set of pairs $X$, define the relations $T_i ^ {X}$ as follows:
put $(x,y) \in T_{-1} ^ {X}$ if and only if $(x,y)\in X$;
put $(x,y) \in T_{1} ^ {X}$ if and only if $(y,x)\in X$; finally,
put $(x,y) \in T_{0} ^ {X}$, otherwise.
We define $\mathcal{P}(3)$ as a variant of $\mathbb{P}$ by
"twisting" the directed edges $E ^ {\mathbb P}$: The
set of vertices of $\mathcal{P}(3)$ is $P$. Define the edges $E^{\mathcal{P}(3)}$ so that $(x,y) \in T_{k+(j-i) \mod 3} ^ {E^{\mathcal{P}(3)}}$ if and only if $(x,y) \in T_k ^ {E^{\mathbb P}}$, whenever $x\in P_i$ and $y\in P_j$.

An important observation, which we will
use to show the expansion property, is that a cycle of length three
embeds in $\mathcal{P}(3)$, but not in $\mathcal{P}$.

Now, we define the expansion of $\mathcal{P}(3)$. Let $\mathcal{L}'=\{E,R_{0},R_{1},R_{2},<\}$
be the signature, where $E$ and $<$ are binary and $R_{i}$'s are
unary. We set $E ^ {\mathcal{P}(3) ^ {*}}$ to be $E ^ {\mathcal{P}(3)}$. {Define $\mathcal{P}(3)^{*}$ to be the expansion of $\mathcal{P}(3)$
where $R_{i}^{\mathcal{P}(3)^{*}}=P_{i}$ and $<^{\mathcal{P}(3)^{*}}$
is a linear extension of $\prec$ such that $\mathcal{P}(3)^{*}$ is still a Fra\"{i}ss\'{e} structure. Define
$\mathcal{K}$ to be the age of $\mathcal{P}(3)^{*}$. 

Next, we define signatures $\mathcal{L}_{0}=\{\prec,R_{0},R_{1},R_{2},<\}$,
$\mathcal{L}_{1}=\{\prec,<\}$, and $\mathcal{L}_{2}=\{R_{0},R_{1},R_{2},<\}$,
where $R_{i}$'s are unary, while $\prec$ and $<$ are binary.

Define $\mathcal{K}_{0}$ to be the class of finite structures $(A,\prec^{\mathbf{A}},R_{0}^{\mathbf{A}},R_{1}^{\mathbf{A}},R_{2}^{\mathbf{A}},<^{\mathbf{A}})$
where $\prec^{\mathbf{A}}$ is a poset on $A$, $R_{i}^{\mathbf{A}}$'s
partition $A$ and $<^{\mathbf{A}}$ is a linear order extending $\prec^{\mathbf{A}}$.
Define $\mathcal{K}_{i}$ to be the class of reducts of structures
from $\mathcal{K}_{0}$ with signature $\mathcal{L}_{i}$, for $i\in\{1,2\}$.  The class $\mathcal{K}_{1}$ already satisfies the Ramsey property, see \cite{key-PTW}.  Moreover, the class $\mathcal{K}_{2}$ is essentially the previously defined class $\mathcal{OP}_{3}$ and its Ramsey property was demonstrated in \cite{key-KPT} (see proof of Theorem 8.4).

The members of the class $\mathcal{K}_{0}$ can be treated as members of the class $\mathcal{K}$ and vice versa. For simplicity, we suppose a given $\m A \in \mathcal {K}_{0}$ is such that $(A,\prec^{\m A})$ is a substructure of $\mathcal{P}$.  By the "twisting" procedure defined at the beginning of this section, we arrive at a member $\m A ' = (A,E^{\m A'})$ of the age of $\mathcal{P}(3)$ so that $(A,E^{\m A'},R_0^{\m A},R_1^{\m A},R_2^{\m A},<^{\m A})$ is in the age of $\mathcal{P}(3)^{*}$.  By reversing this procedure, we may turn a given member of $\mathcal{K}$ into a member of $\mathcal{K}_{0}$.

What follows is a short but powerful argument from \cite{key-S3}, which
we include here to keep this paper as self-contained as possible, as before.  The reader is invited to compare it to the proof of the result from \cite{key-B} found in \cite{key-S1}.
\begin{thm}
The class $\mathcal{K}_{0}$ satisfies the Ramsey property.\end{thm}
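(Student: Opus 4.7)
My plan is to combine the Ramsey properties of the two sub-signature classes $\mathcal{K}_1$ and $\mathcal{K}_2$ --- known from \cite{key-PTW} and \cite{key-KPT} respectively --- into a Ramsey property for the ``product'' class $\mathcal{K}_0$, in the spirit of Theorem 2 of \cite{key-S1}. The starting observation is that a structure in $\mathcal{K}_0$ is nothing more than a finite linearly ordered set $(A,<^{\mathbf{A}})$ carrying two mutually independent enrichments: a partial order $\prec^{\mathbf{A}}$ refining $<^{\mathbf{A}}$ (the $\mathcal{L}_1$-reduct), and a $3$-partition $R_0^{\mathbf{A}}, R_1^{\mathbf{A}}, R_2^{\mathbf{A}}$ (the $\mathcal{L}_2$-reduct). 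No compatibility condition ties the two together, which is precisely what enables an iterated application of the two Ramsey theorems on a common ordered base.

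Fix $\mathbf{A} \leq \mathbf{B}$ in $\mathcal{K}_0$ and $r \in \mathbb{N}$, and write $\mathbf{A}^{(i)}, \mathbf{B}^{(i)}$ for their reducts to $\mathcal{L}_i$ ($i=1,2$). First, I would apply the Ramsey property of $\mathcal{K}_2$ to obtain a structure $\mathbf{C}^{(2)} \in \mathcal{K}_2$ with $\mathbf{C}^{(2)} \to (\mathbf{B}^{(2)})_{r}^{\mathbf{A}^{(2)}}$; set $t := \left|\binom{\mathbf{C}^{(2)}}{\mathbf{A}^{(2)}}\right|$ and fix an enumeration of $\binom{\mathbf{C}^{(2)}}{\mathbf{A}^{(2)}}$. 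Next, I would apply the Ramsey property of $\mathcal{K}_1$ to obtain $\mathbf{C}^{(1)} \in \mathcal{K}_1$ with $\mathbf{C}^{(1)} \to (\mathbf{B}^{(1)})_{r^{t}}^{\mathbf{A}^{(1)}}$. Finally, I would amalgamate $\mathbf{C}^{(1)}$ and $\mathbf{C}^{(2)}$ freely over their common linearly ordered reduct, producing a single $\mathbf{C} \in \mathcal{K}_0$ whose $\mathcal{L}_i$-reduct absorbs $\mathbf{C}^{(i)}$, and such that every copy of $\mathbf{A}^{(1)}$ inside $\mathbf{C}^{(1)}$ has exactly $t$ compatible partition-expansions to copies of $\mathbf{A}$ in $\mathbf{C}$, one for each previously enumerated $\mathbf{A}^{(2)}$-copy.

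Given a coloring $\chi : \binom{\mathbf{C}}{\mathbf{A}} \to [r]$, I would define an auxiliary coloring $\bar{\chi}$ of $\binom{\mathbf{C}^{(1)}}{\mathbf{A}^{(1)}}$ with $r^{t}$ colors by sending each copy of $\mathbf{A}^{(1)}$ to the $t$-tuple of $\chi$-colors of its $t$ compatible partition-expansions, listed in the fixed order. By the choice of $\mathbf{C}^{(1)}$ there is a $\bar{\chi}$-monochromatic $\mathbf{B}^{(1)}$-copy inside $\mathbf{C}^{(1)}$; on the corresponding substructure of $\mathbf{C}$, the coloring $\chi$ depends only on the partition coordinate. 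Applying the Ramsey property of $\mathcal{K}_2$ inside this substructure then yields a $\chi$-monochromatic copy of $\mathbf{B}$, completing the proof.

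The main obstacle I expect lies in arranging the amalgamation step carefully: one must produce $\mathbf{C} \in \mathcal{K}_0$ in which every (poset copy, partition copy) pair with matching ordered reduct occurs as a genuine copy of $\mathbf{A}$, yet in a sufficiently controlled way that the product-coloring argument above does not over- or undercount. This is where the Fra\"{\i}ss\'e structure of $\mathcal{K}_0$ and the fact that it amalgamates freely over linear orders becomes essential; once this amalgamation is set up properly, the remainder is a short two-step Ramsey reduction, which is precisely the economy the authors credit to the technique of \cite{key-S3}.
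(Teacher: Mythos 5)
Your overall skeleton (stabilize the partition coordinate first, then the poset coordinate with $r^{t}$ colours, then read the colouring of $\binom{\mathbf{C}}{\mathbf{A}}$ as a colouring of pairs) is essentially the standard proof of the product Ramsey statement
$(\mathbf{C}_{1},\mathbf{C}_{2})\rightarrow(\mathbf{B}_{1},\mathbf{B}_{2})_{r}^{(\mathbf{A}_{1},\mathbf{A}_{2})}$,
which the paper simply invokes (cf.\ Lemma \ref{lem:blob2} and \cite{key-S1}). What is missing is precisely the step you flag as ``the main obstacle'': a concrete construction of a single $\mathbf{C}\in\mathcal{K}_{0}$ in which every (poset copy, partition copy) pair is realized as a genuine copy of $\mathbf{A}$. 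As stated, ``amalgamate $\mathbf{C}^{(1)}$ and $\mathbf{C}^{(2)}$ freely over their common linearly ordered reduct'' is not a construction: $\mathbf{C}^{(1)}$ and $\mathbf{C}^{(2)}$ have different universes (typically of different sizes), so there is no common reduct to amalgamate over. Worse, inside any fixed $\mathbf{C}\in\mathcal{K}_{0}$ a given point-set supporting a copy of $\mathbf{A}^{(1)}$ carries exactly one induced partition, so it cannot admit $t$ distinct ``partition-expansions''; the $t$ copies attached to one $\mathbf{A}^{(1)}$-copy must live on $t$ different point-sets, and a mere joint embedding (or strong amalgamation over linear orders) gives no control over which pairs get realized and which copies of $\mathbf{A}$ in $\mathbf{C}$ are accounted for in your auxiliary colouring $\bar\chi$.

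The paper's resolution is the product-plus-diagonal device: take $C=C_{1}\times C_{2}$, with $\prec^{\mathbf{C}}$ read off the first coordinate, the predicates $R_{i}^{\mathbf{C}}$ off the second, and $<^{\mathbf{C}}$ the lexicographic order; then for equally sized $\mathbf{D}_{1}\subseteq\mathbf{C}_{1}$, $\mathbf{D}_{2}\subseteq\mathbf{C}_{2}$ define the diagonal substructure $\Delta(\mathbf{D}_{1},\mathbf{D}_{2})$. This makes each pair $(\mathbf{A}_{1}',\mathbf{A}_{2}')$ correspond to a copy $\Delta(\mathbf{A}_{1}',\mathbf{A}_{2}')$ of $\mathbf{A}$ on its own point-set, and --- the bookkeeping your sketch cannot certify --- every copy of $\mathbf{A}$ inside the final diagonal copy $\Delta(\mathbf{B}_{1}',\mathbf{B}_{2}')$ of $\mathbf{B}$ is itself a diagonal of a pair, so the monochromaticity obtained on pairs transfers to $\chi$. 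Without this (or an equivalent transfer construction), your argument does not close; note also that the Ramsey property of $\mathcal{K}_{0}$ is genuinely not automatic from $\mathcal{K}_{1}$ and $\mathcal{K}_{2}$ being Ramsey, which is why this superposition construction (from \cite{key-S3}, cf.\ \cite{key-B}) is the actual content of the theorem.
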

\begin{proof}
Let \[ \mathbf{A}=(A,E^{\mathbf{A}},R_{0}^{\mathbf{A}},R_{1}^{\mathbf{A}},R_{2}^{\mathbf{A}},<^{\mathbf{A}})\]
and \[\mathbf{B}=(A,E^{\mathbf{B}},R_{0}^{\mathbf{B}},R_{1}^{\mathbf{B}},R_{2}^{\mathbf{B}},<^{\mathbf{B}})\]
be in $\mathcal{K}$. Set $\mathbf{A}_{1}=(A,E^{\mathbf{A}},<^{\mathbf{A}})$,
\[\mathbf{A}_{2}=(A,R_{0}^{\mathbf{A}},R_{1}^{\mathbf{A}},R_{2}^{\mathbf{A}},<^{\mathbf{A}}),\]
\[\mathbf{B}_{1}=(A,E^{\mathbf{B}},<^{\mathbf{B}}),\] \[\mathbf{B}_{2}=(A,R_{0}^{\mathbf{B}},R_{1}^{\mathbf{B}},R_{2}^{\mathbf{B}},<^{\mathbf{B}}).\]

Let $r$ be a natural number. We have $\mathbf{C}_{1}\in\mathcal{K}_{1}$
and $\mathbf{C}_{2}\in\mathcal{K}_{2}$ such that

\[
(\mathbf{C}_{1},\mathbf{C}_{2})\rightarrow(\mathbf{B}_{1},\mathbf{B}_{2})_{r}^{(\mathbf{A}_{1},\mathbf{A}_{2})}
\]

Define $\mathbf{C}\in\mathcal{K}$ as follows: Set the universe $C=C_{1}\times C_{2}$.
Set $(x,y)\prec^{\mathbf{C}}(z,w)$ if and only if $x\prec^{\mathbf{C}_{1}}z$.
Set $(x,y)\in R_{i}^{\mathbf{C}}$ if and only if $y\in R_{i}^{\mathbf{C}_{2}}$.
Finally, set $(x,y)<^{\mathbf{C}}(z,w)$ if and only if $x<^{\mathbf{C}_{1}}z$
or ($x=z$ and $y<^{\mathbf{C}_{2}}w$). For structures $\mathbf{D}_{1}\subseteq\mathbf{C}_{1}$,
$\mathbf{D}_{2}\subseteq\mathbf{C}_{2}$ of the same size, define
the \emph{diagonal} substructure $\Delta(\mathbf{D}_{1},\mathbf{D}_{2})\subseteq\mathbf{C}$
as follows: First define an ordering on $[|D_{1}|]\times[|D_{2}|]$
by setting $(x,y)<(z,w)$ if and only if $x<z$ or ($x=z$ and $y<w$).
There is a unique order preserving map $f:[|D_{1}|]\times[|D_{2}|]\rightarrow D_{1}\times D_{2}$.
Set the universe of $\Delta(\mathbf{D}_{1},\mathbf{D}_{2})$ to be
$\{f(1,1),f(2,2),\ldots,f(|D_{1}|,|D_{2}|)\}$.

Let
\[
\chi:{\mathbf{C} \choose \mathbf{A}}\rightarrow[r].
\]
 be a colouring. Define the induced colouring $\tilde{\chi}:{(\mathbf{C}_{1},\mathbf{C}_{2}) \choose (\mathbf{A}_{1},\mathbf{A}_{2})}\rightarrow[r]$
by

\[
\tilde{\chi}((\mathbf{A}'_{1},\mathbf{A}'_{2}))=\chi(\Delta(\mathbf{A}'_{1},\mathbf{A}'_{2})).
\]

Then there is a $\tilde{\chi}$-monochromatic \[ (\mathbf{B}'_{1},\mathbf{B}'_{2})\in{(\mathbf{C}_{1},\mathbf{C}_{2}) \choose (\mathbf{B}_{1},\mathbf{B}_{2})}.\]
Set $\mathbf{B}'=\Delta(\mathbf{B}'_{1},\mathbf{B}'_{2})$. Then $\mathbf{B}'$
is isomorphic to $\mathbf{B}$ and is $\chi$-monochromatic.\end{proof}
\begin{cor}
The age of $\mathcal{P}(3)^{*}$ satisfies the Ramsey property.\end{cor}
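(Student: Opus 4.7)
The plan is to transfer the Ramsey property from $\mathcal{K}_0$, already established in the preceding theorem, to $\mathcal{K}=\age(\mathcal{P}(3)^{*})$ via the twisting correspondence summarized just before the corollary. Given $\m A \leq \m B$ in $\mathcal{K}$ and $r \in \N$, I would first untwist them to produce $\m A_0 \leq \m B_0$ in $\mathcal{K}_0$: these are the $\mathcal{L}_0$-structures on the same universes as $\m A$ and $\m B$, with the same unary relations $R_i$ and the same linear order $<$, whose partial order $\prec$ is obtained from $E$ by inverting the twisting recipe. This is well-defined since the recipe is driven pointwise by the partition $R_0, R_1, R_2$, which appears in both signatures.

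Next, apply the Ramsey property for $\mathcal{K}_0$ to get $\m C_0 \in \mathcal{K}_0$ with
\[
\m C_0 \rightarrow (\m B_0)^{\m A_0}_{r},
\]
and let $\m C \in \mathcal{K}$ be the twist of $\m C_0$. I claim that $\m C \rightarrow (\m B)^{\m A}_r$. The whole argument reduces to verifying that the twisting procedure, performed over a common universe with the same partition, induces bijections
\[
\binom{\m C}{\m A} \longleftrightarrow \binom{\m C_0}{\m A_0}, \qquad \binom{\m C}{\m B} \longleftrightarrow \binom{\m C_0}{\m B_0}.
\]
This holds because every embedding in $\mathcal{K}$ and every embedding in $\mathcal{K}_0$ must preserve the $R_i$'s and $<$, and the twist depends only on these data.

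With this in hand, given a colouring $\chi \colon \binom{\m C}{\m A} \rightarrow [r]$, define $\chi_0 \colon \binom{\m C_0}{\m A_0} \rightarrow [r]$ by $\chi_0(\m A_0') = \chi(\mathrm{twist}(\m A_0'))$. By the choice of $\m C_0$, there is $\m B_0' \in \binom{\m C_0}{\m B_0}$ on which $\chi_0$ is constant, and $\mathrm{twist}(\m B_0') \in \binom{\m C}{\m B}$ is then $\chi$-monochromatic.

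The only delicate point, and the step I would write out most carefully, is the naturality of the twist with respect to substructures and isomorphisms, that is, the fact that twisting really does commute with restriction to substructures and carries isomorphisms to isomorphisms. This is forced by the pointwise nature of the twist together with the presence of the $R_i$'s and $<$ in both signatures, but it is the conceptual heart of the proof and is worth stating as a small lemma before invoking it.
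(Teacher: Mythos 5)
Your proposal is correct and matches the paper's intended argument: the paper gives no separate proof of the corollary precisely because it follows from the Ramsey theorem for $\mathcal{K}_0$ via the twisting/untwisting correspondence described just before that theorem, which is exactly the transfer you carry out (untwist $\m A$ and $\m B$, apply $\m C_0 \rightarrow (\m B_0)^{\m A_0}_r$, twist back, and pull back colourings through the induced bijections on sets of copies). Your closing remark about verifying that the twist commutes with restriction and isomorphism, since it is determined pointwise by the partition $R_0,R_1,R_2$ present in both signatures, is precisely the content of the paper's statement that members of $\mathcal{K}_0$ can be treated as members of $\mathcal{K}$ and vice versa.
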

\begin{thm}
The age of $\mathcal{P}(3)^{*}$ satisfies the expansion property
with respect to the age of $\mathcal{P}(3)$.\end{thm}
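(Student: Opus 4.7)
The plan is to follow the template of Theorems \ref{thm:exp7} and \ref{thm:exp8}, leveraging the Ramsey property for $\age(\mathcal{P}(3)^{*})$ established in the preceding corollary. By precompactness, a given $\mathbf{A}\in\age(\mathcal{P}(3))$ has only finitely many expansions $\mathbf{A}^{*}_{1},\ldots,\mathbf{A}^{*}_{m}\in\age(\mathcal{P}(3)^{*})$, and the strategy will be to construct a candidate $\mathbf{B}$ as the reduct of a Ramsey-witness structure large enough that any expansion of $\mathbf{B}$ must contain every $\mathbf{A}^{*}_{k}$.

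First, I would use joint embedding in $\age(\mathcal{P}(3)^{*})$ to find a single $\mathbf{D}^{*}$ containing each $\mathbf{A}^{*}_{k}$ as a substructure. Next, I would enumerate the finite set of two-point ``test structures'' $\mathbf{E}^{\alpha}\in\age(\mathcal{P}(3)^{*})$ (a pair of vertices with a choice of $R_{i}$-labels, a compatible edge type, and a $<$-order extending the untwisted comparability). Applying the Ramsey property iteratively across these finitely many types, I would build $\mathbf{C}^{*}\in\age(\mathcal{P}(3)^{*})$ with $\mathbf{D}^{*}$ embedded inside, such that given any coloring of each family of $\mathbf{E}^{\alpha}$-copies in $\mathbf{C}^{*}$ by the finite palette of possible two-point configurations, some copy of $\mathbf{D}^{*}$ in $\mathbf{C}^{*}$ is monochromatic for every type simultaneously. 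Set $\mathbf{B}$ to be the reduct of $\mathbf{C}^{*}$ to $\age(\mathcal{P}(3))$.

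Given an arbitrary expansion $\mathbf{B}^{*}$ of $\mathbf{B}$, I would color each $\mathbf{E}^{\alpha}$-copy in $\mathbf{C}^{*}$ by the $\age(\mathcal{P}(3)^{*})$-type that the pair inherits in $\mathbf{B}^{*}$. A common monochromatic copy $\mathbf{D}'$ produced by the Ramsey construction will then exhibit a uniform discrepancy between $\mathbf{B}^{*}$ and $\mathbf{C}^{*}$, describable by a fixed permutation of the unary predicates $R_{0},R_{1},R_{2}$ possibly composed with the global order reversal. Since such symmetries permute the valid expansions of $\mathbf{A}$, the specified $\mathbf{A}^{*}$ corresponds, through the inverse symmetry, to some variant $\mathbf{A}^{*}_{j}$ already embedded in $\mathbf{D}^{*}\cong\mathbf{D}'$; pulling this embedding through the symmetry yields an embedding of $\mathbf{A}^{*}$ into $\mathbf{B}^{*}$.

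The hard part will be the bookkeeping tying the uniform two-point discrepancy to a global relabeling operation: permuting the $R_{i}$'s alters the untwisted partial order and hence the edge relation, so one must argue that the directed-graph reduct being shared by $\mathbf{B}^{*}$ and $\mathbf{C}^{*}$ forces the pair-local discrepancy pattern to arise from one of the finitely many symmetries under which the family $\{\mathbf{A}^{*}_{k}\}$ is closed. Ensuring $\mathbf{D}^{*}$ absorbs every such symmetry---and verifying that no spurious discrepancy types can survive monochromaticity across all $\mathbf{E}^{\alpha}$---is where the precompactness hypothesis and the structure of the twist operation must be used in full.
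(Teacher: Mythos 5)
Your outline reproduces the general template (list all expansions of $\mathbf{A}$, joint-embed them into one structure, apply the Ramsey property to small test structures, and argue that in an arbitrary expansion of the reduct a monochromatic copy carries the built-in expansion up to a symmetry), and this is indeed the shape of the paper's argument. But the two steps you defer as ``the hard part'' are not bookkeeping: they are the actual mathematical content, and your setup as stated does not supply the tools to carry them out. First, monochromaticity of your two-point (and one-point) colourings only tells you that each old part of the monochromatic copy lands inside a single new $R$-class; nothing in your construction prevents two distinct old parts from landing in the \emph{same} new class, in which case the induced structure is not a relabelling of the built-in one and the pull-back of $\mathbf{A}^{*}$ fails. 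The paper excludes this by planting, inside the joint-embedding structure, a directed $3$-cycle $\mathbf{D}_{i}$ whose vertices all carry the same label $R_{i}$: if two labels were identified, that cycle would sit inside one class of the new expansion, i.e.\ inside a copy of the generic partial order $\mathcal{P}$, contradicting the fact (noted when $\mathcal{P}(3)$ is defined) that $C_{3}$ embeds in $\mathcal{P}(3)$ but not in $\mathcal{P}$. This is where the specific structure of the twist is used, and it is absent from your proposal.

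Second, your proposed ``symmetry group'' is wrong in detail, and this matters for the final pull-back. Global order reversal is \emph{not} a symmetry of $\age(\mathcal{P}(3)^{*})$: the order $<$ must be a linear extension of the partial order recovered by untwisting, so reversing it (while keeping $E$ and the $R_{i}$) leaves the class except on antichains; likewise not every permutation of $R_{0},R_{1},R_{2}$ preserves the age, since the untwisting depends on the difference of labels. The paper does not absorb a reversal into the symmetry; it \emph{rules out} any disagreement of the new order with the built-in one, using the three-point structures $\mathbf{F}_{i}$ (an incomparable chain inside one part) and $\mathbf{G}_{i,j}$ (across two parts): a uniform disagreement on the incomparable pairs, combined with the comparable pair whose order is forced by the edge, would make the new linear order contain a $3$-cycle, which is absurd. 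Only after both facts (the partition is preserved as a partition, and the order is preserved exactly) does the closedness argument you invoke go through, and even then the relevant relabellings need not be classified in advance: one simply observes that the structures induced by the new expansion on the planted copies are themselves in the age, so the finite list of expansions of the reduct of $\mathbf{A}$ is permuted. Without the cycle obstructions your colouring argument can terminate in a ``uniform discrepancy'' that is not a legal symmetry at all, so as written the proof has a genuine gap.
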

\begin{proof}
We first define a list of elements of the age of $\mathcal{P}(3)^{*}$:
\begin{itemize}
\item If $i\in\{0,1,2\}$, set $\mathbf{X}_{i}$ to be such that $X_{i}=\{x\}$,
$E^{\mathbf{X}_{i}}=<^{\mathbf{X}_{i}}=\emptyset$. Moreover, set
$R_{i}^{\mathbf{X}_{i}}=\{x\}$ and $R_{i}^{\mathbf{X}_{j}}=\emptyset$
for $i\neq j$.
\item If $i\in\{0,1,2\},$ set $\mathbf{D}_{i}$ to be such that $D_{i}=\{x,y,z\}$,\\
 $E^{\mathbf{D}_{i}}=\{(x,y),(y,z),(z,x)\}$, $R_{i}^{\mathbf{D}_{i}}=\{x,y\}$,
$R_{(i+1)\mod3}^{\mathbf{D}_{i}}=\{z\}$, $<^{\mathbf{D}_{i}}=\{(x,y),(y,z),(x,z)\}$.
\item If $i,j\in\{0,1,2\}$, $i\leq j$, set $\mathbf{E}_{i,j}^{+}$ to
be such that $E_{i,j}^{+}=\{x,y\}$, $x\in R_{i}^{\mathbf{E}_{i,j}^{+}}$,
$y\in R_{j}^{\mathbf{E}_{i,j}^{+}}$. Set $<^{\mathbf{E}_{i,j}^{+}}=\{(x,y)\}$.
If $i=j$, set $E^{\mathbf{E}_{i,j}^{+}}=\emptyset$. If $j=i+1$,
set $E^{\mathbf{E}_{i,j}^{+}}=\{(y,x)\}$. If $j=i+2$, set $E^{\mathbf{E}_{i,j}^{+}}=\{(x,y)\}$.
\item If $i,j\in\{0,1,2\}$, $i<j$, set $\mathbf{E}_{i,j}^{-}$ to be such
that $E_{i,j}^{-}=\{x,y\}$, $x\in R_{i}^{\mathbf{E}_{i,j}^{-}}$,
$y\in R_{j}^{\mathbf{E}_{i,j}^{-}}$. Set $<^{\mathbf{E}_{i,j}^{-}}=\{(y,x)\}$.
If $j=i+1$, set $E^{\mathbf{E}_{i,j}^{-}}=\{(y,x)\}$. If $j=i+2$,
set $E^{\mathbf{E}_{i,j}^{-}}=\{(x,y)\}$.

\item For $i\in\{0,1,2\}$, set $\mathbf{F}_{i}$ to be such that $F_{i}=\{x,y,z\}$.
Set $<^{\mathbf{F}_{i}}=\{(x,y),(y,z),(x,z)\}$. Set $E^{\mathbf{F}_{i}}=\{(x,z)\}$,
$R_{i}^{\mathbf{F}_{i}}=\{x,y,z\}$, $R_{k}^{\mathbf{F}_{i}}=\emptyset$
for $k\neq i$.

\item For $i,j\in\{0,1,2\},$ $i\neq j$, set $\mathbf{G}_{i,j}$ to be
such that $G_{i,j}=\{x,y,z\}$, $R_{i}^{\mathbf{G}_{i,j}}=\{x,y\}$,
$R_{j}^{\mathbf{G}_{i,j}}=\{z\}$, $<^{\mathbf{G}_{i,j}}=\{(z,x),(x,y),(z,y)\}$.
If $j=i+1$, then set $E^{\mathbf{G}_{i,j}}=\{(x,z),(y,z)\}$. If
$j=i+2$, then set $E^{\mathbf{G}_{i,j}}=\{(x,y),(y,z)\}$. If $j=i-1$,
then set $E^{\mathbf{G}_{i,j}}=\{(y,z)\}$. If $j=i-2$, then set
$E^{\mathbf{G}_{i,j}}=\{(x,z),(z,y)\}$.

\end{itemize}
Note that $\mathbf{D}_{i}$'s are cycles.  Let $\mathbf{A}$ be in the age of
$\mathbf{\mathcal{P}}(3)^{*}$. Let $\mathbf{A}'=(A,E^{\mathbf{A}})$
be its reduct. List all possible expansions of $\mathbf{A}'$ that
result in elements of the age of $\mathcal{P}(3)^{*}$: $\mathbf{A}_{1},\ldots\mathbf{A}_{k}$.
Let $\mathbf{B}$ be a structure in the age of $\mathcal{P}(3)^{*}$
such that $\mathbf{A}$, $\mathbf{A}_{i}$'s, $\mathbf{X}_{i}$'s,
$\mathbf{E}_{i,j}^{+/-}$'s,  $\mathbf{F}_{i}$'s and $\mathbf{G}_{i,j}$'s
all embed in $\mathbf{B}$. We can find such a $\mathbf{B}$ due to
the joint embedding property. Using the Ramsey property, we can find
$\mathbf{C}_{i}$ in the age of $\mathcal{P}(3)^{*}$ such that
\[
\mathbf{C}_{0}\rightarrow(\mathbf{B})_{3}^{\mathbf{X}_{0}}
\]

\[
\mathbf{C}_{1}\rightarrow(\mathbf{C}_{0})_{3}^{\mathbf{X}_{1}}
\]

\[
\mathbf{C}_{2}\rightarrow(\mathbf{C}_{1})_{3}^{\mathbf{X}_{2}}.
\]

Fix a bijection $g:\{0,\ldots,5\}\rightarrow(\{0,1,2\}\times\{0,1,2\})\backslash\{(i,j):i>j\}$.
Find

\[
\mathbf{C}_{3+i}\rightarrow(\mathbf{C}_{2+i})_{2}^{\mathbf{E}_{g(i)}^{+}}
\]
 for $0\leq i\leq5$.

Fix a bijection $h:\{0,\ldots,2\}\rightarrow(\{0,1,2\}\times\{0,1,2\})\backslash\{(i,j):i\geq j\}$.
Find

\[
\mathbf{C}_{9+i}\rightarrow(\mathbf{C}_{8+i})_{2}^{\mathbf{E}_{h(i)}^{-}}
\]
 for $0\leq i\le2$.

Set $\mathbf{C}$ to be $\mathbf{C}_{11}$. Let $\mathbf{C}'=(C,E^{\mathbf{C}})$
be a reduct of $\mathbf{C}$. Consider any expansion $\mathbf{C}^{*}$
of $\mathbf{C}'$ in the age of $\mathcal{P}(3)^{*}$. For $i\in\{0,1\}$,
define colourings $\chi_{i}:{\mathbf{C} \choose \mathbf{X}_{i}}\rightarrow\{0,1,2\}$
so that $\chi(\mathbf{X}')=i$ where $X'=\{x\}$ and $x\in R_{i}^{\mathbf{C}^{*}}$.
For $0\leq i\leq5$, define $\chi_{3+i}:{\mathbf{C} \choose \mathbf{E}_{g(i)}^{+}}\rightarrow\{0,1\}$
by setting $\chi_{3+i}(\mathbf{E}')=0$ whenever $\mathbf{E}'=\{x,y\}$,
$x<^{\mathbf{C}}y$ and $x<^{\mathbf{C}^{*}}y$. Also, set $\chi_{3+i}(\mathbf{E}')=1$
whenever $\mathbf{E}'=\{x,y\}$, $x<^{\mathbf{C}}y$ but $x>^{\mathbf{C}^{*}}y$.
Likewise, for $0\leq i\leq3$, define $\chi_{9+i}:{\mathbf{C} \choose \mathbf{E}_{g(i)}^{-}}\rightarrow\{0,1\}$
by setting $\chi_{9+i}(\mathbf{E}')=0$ whenever $\mathbf{E}'=\{x,y\}$,
$x<^{\mathbf{C}}y$ and $x<^{\mathbf{C}^{*}}y$. Furthermore, set
$\chi_{9+i}(\mathbf{E}')=1$ whenever $\mathbf{E}'=\{x,y\}$, $x<^{\mathbf{C}}y$
but $x>^{\mathbf{C}^{*}}y$. We can find $\mathbf{B}'\in{\mathbf{C} \choose \mathbf{B}}$
that is $\chi_{i}$-monochromatic, for all $i$. Set $k_{i}$ to be
the colour assigned by $\chi_{i}$ for elements in ${\mathbf{B}' \choose \mathbf{X}_{i}}$,
for $i<3$.

We first show that the parts of $\mathbf{B}'$ determined by $R_{i}$'s
remain the same in $\mathbf{C}^{*}$ (other than being perhaps reordered):
more specifically, we show that $k_{i}\neq k_{j}$, for $i\neq j$,
$i,j<3$. Suppose otherwise, i.e., $k_{i_{0}}=k_{j_{0}}$, for some
$i_{0}\neq j_{0}$, $i_{0},j_{0}<3$. Consider a cycle $\mathbf{D}\in{\mathbf{B}' \choose \mathbf{D}_{i_{0}}}$.
For all $x\in D$, we have $x\in R_{i_{0}}^{\mathbf{D}}$. This means
that $\mathbf{D}$ embeds in $P_{i_{0}}$ which is impossible because
$(P_{i_{0}},\prec)$ is isomorphic to the generic partial order.

We can show that for every $i>3$, the colouring $\chi_{i}$ cannot
assign the colour $1$ on ${\mathbf{B}' \choose \mathbf{E}_{g(i)}^{+}}$
or ${\mathbf{B}' \choose \mathbf{E}_{h(i)}^{-}}$: Suppose otherwise.
Let $k\in\{0,1,2\}$. Then $\mathbf{F}_{k}$ embeds in $\mathbf{B}'$. If the colour
of every copy of $\mathbf{E}_{k,k}^{+}$ in $\mathbf{B}'$ is assumed
to be $1$, we have that $<^{\mathbf{C}^{*}}$ restricted to $F_{k}\times F_{k}$
is a cycle. This is absurd. This means that every ``non-edge'' in
$\mathbf{B}'$ entirely contained in a part has the same linear extension
with respect to $\mathbf{C}^{*}$ and $\mathbf{C}$. ({*})

Let $k<\ell$, $k,\ell\in\{0,1,2\}$. Then $\mathbf{G}_{k,\ell}$
embeds in $\mathbf{B}'$, as well. By ({*}), the single non-edge of
every copy of $\mathbf{G}_{k,\ell}$ contained entirely in a part
has only one possible linear extension. If the colour of every
copy of $\mathbf{E}_{k,\ell}^{+}$ in $\mathbf{B}'$ is assumed to
be $1$, we have that $<^{\mathbf{C}^{*}}$restricted to $G_{k,\ell}\times G_{k,\ell}$
would again be a cycle which is again absurd. The case when $k>\ell$
is similar, except we replace $\mathbf{E}_{k,\ell}^{+}$ with $\mathbf{E}_{k,\ell}^{-}$.

In other words, the extension $<^{\mathbf{C}^{*}}$ agrees with $<^{\mathbf{C}}$
when both are restricted to $\mathbf{B}'$. Thus, by construction
of $\mathbf{B}$ and since $\mathbf{B}\cong\mathbf{B}'$, any expansion
of $\mathbf{A}^{*}$ embeds in $\mathbf{B}'$ and therefore in $\mathbf{C}^{*}$.\end{proof}

\section{Semigeneric}

\label{section:semi}

The \emph{semigeneric directed graph} is the Fra\"iss\'e limit of the
class $\mathcal S$ made of all those elements of
$\mathrm{Age}(\omega*I_{\omega})$ which satisfy the following parity
constraint: for any two distinct parts $P$ and $P'$, distinct $u,v$ in
$P$ and distinct $x,y$ in $P'$, the number of edges directed from
$u,v$ to $x,y$ is even.

We construct an expansion $\mathcal S ^{*}$ of $\mathcal S$ by adding
two binary relations symbols, $R$ and $<$. As usual, the symbol $<$
will be interpreted as a linear order. Let $\mathbf A \in \mathcal S$
be $k$-partite. Before explaining how an expansion of $\m A$ is built,
let us point out a crucial fact: given two different parts $P$ and
$P'$ of $\mathbf A$, parity constraints imply that the following
relation is well-defined, and is an equivalence relation on $P$:
\[ u\sim^{\mathbf A} _{P'} v \quad
\mathrm{iff} \quad \forall x \in P' \ E^{\mathbf A}(x,u)
\leftrightarrow E^{\mathbf A}(x,v).\]
In fact there are two equivalence classes, one is $\{ u \in P:
E^{\mathbf A}(x,u)\}$ and the other is $\{ v \in P: E^{\mathbf
A}(v,x)\}$, and observe these two classes do not depend on the choice
of $x \in P'$. \\ 
Moreover the relation 
$\sim^{\mathbf A} $ defined on $A$ by:
\[u\sim^{\mathbf A} v \quad \mathrm{ iff }  
\quad \exists \ P \quad \{u,v\} \subseteq P  
\quad \mathrm{ and } \quad \forall P'\neq P \ u\sim ^{\mathbf A} _{P'} v  \]
is also an equivalence relation. Note that the number of
$\sim^{\mathbf A}$-classes is at most $k2^{k-1}$ (at most $2^{k-1}$ classes on
each part), regardless of the actual size of $\mathbf A$, but will
eventually reach this maximum size by the joint embedding property.

We now explain how an expansion $\m A^{*}$ is constructed. We first
consider a linear order $T = \{ t_{i}:i<k\}$, viewed as a directed
graph, where there is an edge from $t_{i}$ to $t_{j}$ whenever
$t_{i}<t_{j}$. This is an element of $\mathcal S$, and thus by the
joint embedding property we can take a $k$-partite element $\bar{\m
A}$ of $\mathcal S$ whose universe is the disjoint union of $A$ and
$T$ and into which both $\m A$ and $\m T$ embed, and thus $T$ becomes a
transversal in $\bar{\m A}$. Next, for $u$ and $x$ in different parts
of $\m A$, we define
\[R^{\mathbf A ^{*}}(x,u) \quad \mathrm{iff} \quad E^{\bar{\m A}}(t_{i},u) \] 
where $t_{i}$ is the only element of $T$ lying in the same part of
$\bar{\mathbf A}$ as $x$. Finally, consider a linear order on $\bar{\m
A}$ that extends the already existing linear order on the copy of $T$
and is convex relative to the $k$-partition of $\bar{\m A}$ (but not
necessarily with respect to $\sim^{\bar{\mathbf A}}$, which is
finer). Then finally let  $<^{\mathbf A ^{*}}$ denote its restriction on $\m
A$. 


Note that adding $R^{\mathbf A ^{*}}$ and $<^{\mathbf A ^{*}}$ as
above corresponds essentially to adding unary predicates $P^{\mathbf
A^{*}}_{i,f}$, with $i<k$ and $f\in [2]^{[k]\smallsetminus \{i\}}$, 
that partition $\mathbf A$ into the $\sim^{\mathbf A} $ equivalence
classes. Indeed, $<^{\mathbf A ^{*}}$ labels the parts of $\mathbf A$
as $P_{0}<^{\mathbf A^{*}} ... <^{\mathbf A ^{*}}P_{k-1}$, and 
$R^{\mathbf A ^{*}}$ gives rise to:
\[ P^{\mathbf A ^{*}}_{i,f} = \{ u\in P_{i} :
\forall j \neq i \ (f(j)=0 \leftrightarrow  \forall x \in P_j \  R^{\mathbf A ^{*}}(x,u))\}.\] 
Note also that the edge relation of $\bar{\mathbf A}$ can be recovered
from $R^{\mathbf A ^{*}}$ and $<^{\mathbf A ^{*}}$, and hence from
$(P^{\mathbf A ^{*}} _{i, f})_{i, f}$ and $<^{\mathbf A ^{*}}$ as
well. To see this, let $x, u\in \bar{A}$ be in different parts. If
both of them belong to the transversal that was used to define
$\bar{\mathbf A}$, then the edge relation is given by the order
relation $<^{\mathbf A ^{*}}$ between any two elements of $\mathbf A
^{*}$ from the respective parts. If exactly one of them belongs to the
transversal set, say $x=t_i$, then knowing whether there is an edge
from $x=t_i$ to $u$ is directly available from the relation $R^{\mathbf A
^{*}}$ evaluated at $(y,u)$ for any $y \in A$ in the same part as
$x=t_i$. On the other hand, if neither $x$ nor $u$ are in the
transversal set, then consider the quadruple formed by $x$, $u$, and
the corresponding elements $t_{x}$ and $t_{u}$ from the transversal
set; in that quadruple, all edges but the one between $x$ and $u$ are
available from $R^{\mathbf A ^{*}}$ and $<^{\mathbf A ^{*}}$ as
described above, and so the orientation of the edge between $x$ and
$u$ can be deduced thanks to the parity constraint.

One last comment: any element of the class of $k$-sequences of members of $\mathcal{OP}_{2^{k-1}}$ whose unary
predicates are indexed by the set of all pairs $(i,f)$ with $i<k$ and
$f\in [2]^{[k]\smallsetminus \{i\}}$ gives raise to a $k$-partite
element of $\mathcal S ^{*}$. Let us illustrate this on an example
that will also be useful later on. Consider the set $S[n,k]$ defined
as the collection of triples $(i,f,j)$ where $i<k$, $f\in
[2]^{[k]\smallsetminus\{i\}}$ and $j<n$. For $f$ and $i$ fixed, define
\[P ^{\m S[n,k] ^{*}} _{i,f} = \{ (i,f,j) : j<n\}.\]
The element of $\mathcal S ^{*}$ we are about to construct will be
$k$-partite with parts: 
\[P ^{\m S[n,k] ^{*}} _{i} = \bigcup_{f\in
[2]^{[k]\smallsetminus\{ i\}}} P ^{\m S[n,k] ^{*}} _{i,f}= \{ (i,f,j)
: f\in [2]^{[k]\smallsetminus\{i\}}, j<n\}.\] 
If, for each $i$, we equip $[2]^{[k]\smallsetminus\{i\}}$ with the
lexicographical order, then we can induce on $S[n,k]$ a natural
lexicographical order $<^{\m S[n,k] ^{*}}$. This is clearly convex. \\
To define $R^{\m S [n,k]^{*}}$, set:
\[R^{\m S[n,k]^{*}}((i',f',j'),(i,f,j)) \quad \mathrm{ iff } \quad i\neq i' \ \mathrm{ and } \  f(i')=0.\]
For the edge relation $E^{\m S [n,k]^{*}}$, we introduce an imaginary transversal 
$\{t_{i}:i<k\}$, where each imaginary point $t_{i'}$ is attached to
the part $P ^{\m S[n,k] ^{*}} _{i'}$. First, add an edge from $t_{i'}$ to $t_{i}$
whenever $i'<i$. Then for each $(i,f,j)$ with $i\neq i'$, add an edge from
$t_{i'}$ to $(i,f,j)$ iff $f(i')=0$, i.e. whenever $R^{\m
S[n,k]^{*}}((i',f',j'),(i,f,j))$ for every point of the form
$(i',f',j')$. Otherwise, add an edge in the other direction. Next,
when $(i,f,j)$ and $(i',f',j')$ are such that $i\neq i'$, consider the
quadruple formed by $(i,f,j)$, $(i',f',j')$, $t_{i}$ and
$t_{i'}$. There is only one way to add an oriented edge between
$(i,f,j)$ and $(i',f',j')$ in such a way that the number of edges from
the pair $\{(i,f,j), t_{i}\}$ to the pair $\{(i',f',j'),t_{i'}\}$ is
even. \\
This is enough to ensure that the resulting directed graph
structure $E^{\m S[n,k] ^{*}}$ is in $\mathcal S$: indeed, consider a
quadruple formed by pairs $\{u, v\}$, $\{ x,y\}$ of elements in same
parts. Add the corresponding imaginary points $t$ and $t'$. For each
transversal pair of $\{u,v,x,y\}$, complete it into a quadruple
using $\{t,t'\}$ where the parity constraint is satisfied. Identifying
$E^{\m S[n,k] ^{*}}$ with its characteristic function, this is
equivalent to saying some sum equals $0$ modulo $2$. For example, for $\{
u,x\}$: 
\[ E^{\m S[n,k] ^{*}}(u,x)+E^{\m S[n,k] ^{*}}(u,t')+E^{\m
S[n,k] ^{*}}(t,x)+E^{\m S[n,k] ^{*}}(t,t')=0.\] 
Adding up all these equalities for all transversal pairs of
$\{u,v,x,y\}$ (i.e. pairs with elements in
different parts), notice that each term involving $t$ or $t'$ appears
an even number of times, and therefore vanishes in the total sum. As a
result, we obtain: 
\[ E^{\m S[n,k] ^{*}}(u,x)+E^{\m S[n,k] ^{*}}(u,y)+E^{\m S[n,k] ^{*}}(v,x)+E^{\m S[n,k] ^{*}}(v,y)=0.\]
This shows that the parity constraint is satisfied on $\{u,v,x, y\}$,
and shows that $\m S[n,k]^{*}\in \mathcal S^{*}$. We call $\m S[n,k]$
the corresponding reduct in $\mathcal S$. Let us emphasize the following
property of $\m S[n,k]$:

\begin{lem} 
\label{lem:Snk}
Every part of $\m S[n,k]$ is partitioned into $2^{k-1}$ many $\sim^{\m
S[n,k]}$-classes, each of size $n$.
\end{lem}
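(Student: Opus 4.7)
My plan is to compute the relation $\sim^{\m S[n,k]}$ explicitly, exploiting the fact that the edge relation of $\m S[n,k]$ has been prescribed by the construction via the imaginary transversal $\{t_i : i<k\}$ together with the parity constraint. The key step will be to derive a closed formula for $E^{\m S[n,k]}((i',f',j''),(i,f,j))$ when $i \neq i'$. Applying the parity constraint to the quadruple $\{(i,f,j),(i',f',j''),t_i,t_{i'}\}$ yields
\[ E^{\m S[n,k]}(x,u) + E(x,t_i) + E(t_{i'},u) + E(t_{i'},t_i) \equiv 0 \pmod 2, \]
where $u=(i,f,j)$ and $x=(i',f',j'')$. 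Each of the last three terms is directly readable from the construction: $E(t_{i'},u) = [f(i')=0]$ (by the rule defining edges between imaginary and real points), $E(x,t_i) = [f'(i)=1]$ (by the same rule, reversed since the complementary orientation was added when $f'(i)=1$), and $E(t_{i'},t_i) = [i'<i]$. Hence
\[ E^{\m S[n,k]}(x,u) \equiv [f'(i)=1] + [f(i')=0] + [i'<i] \pmod 2. \]

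With this formula in hand the lemma is essentially immediate. Fix a part $P_i$ and two elements $u=(i,f,j)$, $v=(i,g,j')$ of $P_i$. For any $i'\neq i$ and any $x=(i',f',j'')\in P_{i'}$, the expressions for $E^{\m S[n,k]}(x,u)$ and $E^{\m S[n,k]}(x,v)$ differ only in the middle summand, so $u \sim^{\m S[n,k]}_{P_{i'}} v$ holds if and only if $f(i')=g(i')$. Quantifying over all $i'\neq i$ gives $u \sim^{\m S[n,k]} v$ if and only if $f=g$. Therefore the $\sim^{\m S[n,k]}$-classes contained in $P_i$ are precisely the sets $P^{\m S[n,k]^{*}}_{i,f} = \{(i,f,j) : j<n\}$ indexed by $f \in [2]^{[k]\setminus\{i\}}$.

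It remains to count: the index set $[2]^{[k]\setminus\{i\}}$ has cardinality $2^{k-1}$, and for each $f$ the class $P^{\m S[n,k]^{*}}_{i,f}$ contains exactly $n$ elements (one for each $j<n$), giving $2^{k-1}$ classes of size $n$ as claimed. I do not expect a serious obstacle; the only thing to be careful about is keeping the orientation conventions of the construction straight when extracting the auxiliary edge values $E(x,t_i)$, $E(t_{i'},u)$, $E(t_{i'},t_i)$, which is a purely bookkeeping matter once the parity identity has been written down.
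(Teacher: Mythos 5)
Your proposal is correct and follows essentially the same route as the paper: the paper's proof simply asserts ``by construction'' that the $\sim^{\m S[n,k]}$-classes inside the $i$-th part are the sets $P^{\m S[n,k]^*}_{i,f}$ for $f\in[2]^{[k]\smallsetminus\{i\}}$, which is exactly what you establish, just with the verification spelled out via the parity identity relating $E(x,u)$ to the imaginary transversal. Your explicit formula $E(x,u)\equiv[f'(i)=1]+[f(i')=0]+[i'<i]\pmod 2$ correctly encodes the construction's orientation conventions, and the counting conclusion is as in the paper.
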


\begin{proof}
Fix $i<k$. By construction, the $\sim^{\m S[n,k]}$-classes inside the
$i$-th part of $\m S[n,k]$ are the sets $P ^{\m S[n,k] ^{*}} _{i,f}=
\{ (i,f,j) : j<n\}$, where $f$ runs over
$[2]^{[k]\smallsetminus\{i\}}$. Clearly, each has size $n$.
\end{proof}

We define $\mathcal AS$ to consist of the elements of $\mathcal S^*$ with the 
ordering removed.  Remembering how an expansion in $ \mathcal S^*$ is intuitively built
(by adding a transversal), a direct consequence of the previous lemma
is the following corollary:

\begin{cor}\label{cor:Snk}
All expansions of $\m S[n,k]$ in $\mathcal AS$ are isomorphic. 
\end{cor}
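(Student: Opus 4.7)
The plan is to find, for any two $\mathcal{AS}$-expansions $\m X_1=(\m S[n,k],E,R_1)$ and $\m X_2=(\m S[n,k],E,R_2)$ of $\m S[n,k]$, an automorphism of the underlying directed graph transporting $R_1$ to $R_2$. Recall from the discussion preceding the corollary that each $\mathcal{AS}$-expansion arises by adjoining a (virtual) linearly ordered transversal $T=\{t_i\}_{i<k}$ to $\m S[n,k]$, forming some $\bar{\m A}\in\mathcal S$, and setting $R(x,u)\iff E^{\bar{\m A}}(t_{\mathrm{part}(x)},u)$. Lemma~\ref{lem:Snk} is the decisive input: because every part of $\m S[n,k]$ already realises every one of the $2^{k-1}$ possible $\sim$-class patterns with the same multiplicity $n$, each transversal element $t_i$ is necessarily $\sim^{\bar{\m A}}$-equivalent to some pre-existing class $C_i\subseteq P_i$, so that $R$ is entirely determined by the tuple $(C_0,\ldots,C_{k-1})$ together with the matching of $T$ to the parts.

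The main step is to verify that the automorphism group of $\m S[n,k]$ acts transitively on the data $((C_i)_i,\text{matching})$ produced by valid virtual transversals. Because Lemma~\ref{lem:Snk} supplies all class types in each part with equal multiplicity, this group is large: it includes arbitrary permutations of the parts together with all within-part class relabellings of the form ``shift each class label $f$ by a fixed $\alpha_i \in [2]^{[k]\setminus\{i\}}$'' that remain consistent with the parity condition defining $\mathcal S$. A direct calculation using the construction of $\m S[n,k]$ shows that combining these two types of automorphisms yields a transitive action on the parametrising data, so the resulting automorphism (restricted to $\m S[n,k]$) is the desired isomorphism $\m X_1\to\m X_2$.

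The main obstacle is the combinatorial bookkeeping behind this transitivity: one has to check that, under the combined action of part permutations and within-part shifts, the symmetric $\{0,1\}$-pattern indexing the parity signature of the tuple $(C_i)_i$ (together with its matching) takes every value that can arise from a valid virtual transversal. This amounts to showing that every such pattern is expressible both as the ``discrepancy'' $[i<i']+[\sigma(i)<\sigma(i')]$ for some $\sigma\in S_k$ and as the signature of some linearly ordered tournament matching, which is a standard consequence of the linear structure of $T$ together with the maximal realisation of class types afforded by Lemma~\ref{lem:Snk}.
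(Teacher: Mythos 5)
Your proposal is correct and takes essentially the approach the paper intends: the paper states Corollary~\ref{cor:Snk} without a detailed proof, as a direct consequence of Lemma~\ref{lem:Snk} together with the transversal description of expansions, and your argument—each transversal point must be $\sim$-equivalent to an already existing class (possible precisely because all $2^{k-1}$ class types occur in each part with the same multiplicity $n$), and the automorphisms of $\m S[n,k]$ combining part permutations with compatible within-part label shifts act transitively on the resulting data—is exactly the computation behind that assertion. The compatibility condition you isolate, namely that the symmetric discrepancy pattern $[i<i']+[\sigma(i)<\sigma(i')]$ is the same family of patterns as those forced by a linearly ordered transversal, is indeed what makes the relabelling an $E$-automorphism carrying one relation $R$ to the other.
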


We will use this result later on to prove the expansion property of
$\mathcal S^{*}$ relative to $\mathcal S$. Before that, here is
another direct consequence of the various facts we stated above,
together with the Ramsey property for $k$-sequences of members $\mathcal{OP}_{2^{k-1}}$ (see Lemma \ref{lem:blob2}):

\begin{cor}
\label{cor:semigen2}Suppose $\mathbf{A}^{*}, \mathbf{B}^{*} \in\mathcal{S ^{*}}$
are $k$-partite. Then for any $r$ there exists a $k$-partite $\mathbf{C}^{*} \in\mathcal{S}^{*}$
such that,
\[
\mathbf{C}^{*}\rightarrow(\mathbf{B}^{*})_{r}^{\mathbf{A}^{*}}.
\]
\end{cor}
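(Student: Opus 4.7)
The plan is to transfer the Ramsey problem for $k$-partite elements of $\mathcal{S}^{*}$ to a Ramsey problem for $k$-sequences of elements in $\mathcal{OP}_{2^{k-1}}$, and then invoke Lemma \ref{lem:blob2}.

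First, I would formalize the correspondence that was informally described in the paragraphs preceding the statement. Given any $k$-partite $\mathbf{D}^{*}\in\mathcal{S}^{*}$ with parts $P_{0}^{\mathbf{D}^{*}}<^{\mathbf{D}^{*}}\ldots<^{\mathbf{D}^{*}}P_{k-1}^{\mathbf{D}^{*}}$, associate to it the sequence $(\mathbf{D}_{0},\ldots,\mathbf{D}_{k-1})$ of $\mathcal{OP}_{2^{k-1}}$-structures, where $\mathbf{D}_{i}$ has universe $P_{i}^{\mathbf{D}^{*}}$, carries the restriction of $<^{\mathbf{D}^{*}}$, and is equipped with the $2^{k-1}$ unary predicates $(P_{i,f}^{\mathbf{D}^{*}})_{f\in[2]^{[k]\smallsetminus\{i\}}}$. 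Because $E^{\mathbf{D}^{*}}$ can be recovered from the data $(P_{i,f}^{\mathbf{D}^{*}})_{i,f}$ and $<^{\mathbf{D}^{*}}$ via the transversal/parity argument described just before Lemma \ref{lem:Snk}, this map is in fact a bijection between isomorphism types, and moreover it is natural with respect to embeddings: an embedding $\mathbf{A}^{*}\hookrightarrow\mathbf{D}^{*}$ corresponds exactly to a tuple of embeddings $\mathbf{A}_{i}\hookrightarrow\mathbf{D}_{i}$.

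Now apply the correspondence to $\mathbf{A}^{*}$ and $\mathbf{B}^{*}$ to obtain sequences $(\mathbf{A}_{0},\ldots,\mathbf{A}_{k-1})$ and $(\mathbf{B}_{0},\ldots,\mathbf{B}_{k-1})$ in $\mathcal{OP}_{2^{k-1}}$. Using Lemma \ref{lem:blob2}, I would produce a sequence $(\mathbf{C}_{0},\ldots,\mathbf{C}_{k-1})$ of pairwise disjoint elements of $\mathcal{OP}_{2^{k-1}}$, whose orderings extend to an ordering of $\bigcup_{i}C_{i}$ that is convex with respect to the $C_{i}$'s, and such that
\[
(\mathbf{C}_{0},\ldots,\mathbf{C}_{k-1})\rightarrow(\mathbf{B}_{0},\ldots,\mathbf{B}_{k-1})_{r}^{(\mathbf{A}_{0},\ldots,\mathbf{A}_{k-1})}.
\]
Convert this sequence back via the inverse of the correspondence to obtain a $k$-partite $\mathbf{C}^{*}\in\mathcal{S}^{*}$, whose $i$-th part carries $\mathbf{C}_{i}$ and whose edge relation is defined from the $(P_{i,f}^{\mathbf{C}^{*}})_{i,f}$ and $<^{\mathbf{C}^{*}}$ by the parity recipe.

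To conclude, given a coloring $\chi:\binom{\mathbf{C}^{*}}{\mathbf{A}^{*}}\rightarrow[r]$, define an induced coloring $\bar\chi$ on sequences $(\mathbf{A}'_{0},\ldots,\mathbf{A}'_{k-1})$ with $\mathbf{A}'_{i}\in\binom{\mathbf{C}_{i}}{\mathbf{A}_{i}}$ by letting $\bar\chi(\mathbf{A}'_{0},\ldots,\mathbf{A}'_{k-1})=\chi(\mathbf{A}'^{*})$, where $\mathbf{A}'^{*}\in\binom{\mathbf{C}^{*}}{\mathbf{A}^{*}}$ is the unique copy of $\mathbf{A}^{*}$ corresponding to that sub-sequence (using naturality). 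Lemma \ref{lem:blob2} furnishes a monochromatic sub-sequence $(\mathbf{B}'_{0},\ldots,\mathbf{B}'_{k-1})$, which translates back to a copy $\mathbf{B}'^{*}\in\binom{\mathbf{C}^{*}}{\mathbf{B}^{*}}$ on which $\chi$ is constant. The only genuine obstacle is verifying carefully that the correspondence respects embeddings in both directions — i.e.\ that selecting a compatible sub-sequence in $(\mathbf{C}_{0},\ldots,\mathbf{C}_{k-1})$ really does cut out a substructure of $\mathbf{C}^{*}$ in $\mathcal{S}^{*}$, and conversely that every copy of $\mathbf{A}^{*}$ in $\mathbf{C}^{*}$ arises this way. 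This reduces to checking that the edge-recovery procedure commutes with restriction to a sub-sequence, which is automatic from the local nature (four vertices at a time) of the parity constraint.
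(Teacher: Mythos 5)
Your proposal is correct and follows essentially the same route as the paper: the corollary is stated there as a direct consequence of the correspondence between $k$-partite members of $\mathcal{S}^{*}$ and $k$-sequences of members of $\mathcal{OP}_{2^{k-1}}$ (via the predicates $P_{i,f}$, the convex order, and the recoverability of $E$ from $R$ and $<$ by the parity recipe), combined with Lemma \ref{lem:blob2}. You have simply written out the transfer of colourings and copies that the paper leaves implicit.
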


With this fact in mind, we now prove that $\mathcal S^{*}$ has the
Ramsey property.

\begin{cor}
Suppose $\mathbf A ^*,\mathbf{B}^{*} \in\mathcal{S^{*}}$ are
$k$-partite and $\ell$-partite respectively. Then for any $r$ there
exists an $\ell$-partite $\mathbf{C}^{*}\in\mathcal{S}^{*}$ such that
any $r$-colouring $\chi$ of the copies of $\mathbf A ^*$ with a fixed
trace $\tau$, there exists a $\chi$-monochromatic
$\tilde{\mathbf{B}}\in{\mathbf C ^{*} \choose \mathbf{B}^{*}}$. \\
 We symbolize this by
\[
\mathbf C ^{*}\rightarrow(\mathbf{B}^{*})_{r}^{\mathbf A ^*,\tau}.
\]
\end{cor}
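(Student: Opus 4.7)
The plan is to mimic the strategy of Theorem \ref{thm:partite}, reducing the statement to the equi-partite case already settled by Corollary \ref{cor:semigen2}. Let $\bar{\mathbf{B}}^{*}$ be the substructure of $\mathbf{B}^{*}$ supported by the parts of $\mathbf{B}^{*}$ indexed by $\tau$; this is a $k$-partite element of $\mathcal{S}^{*}$ in which $\mathbf{A}^{*}$ still embeds. Apply Corollary \ref{cor:semigen2} to obtain a $k$-partite $\bar{\mathbf{C}}^{*}\in\mathcal{S}^{*}$ such that $\bar{\mathbf{C}}^{*}\rightarrow(\bar{\mathbf{B}}^{*})_{r}^{\mathbf{A}^{*}}$.

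Next, enumerate the copies of $\bar{\mathbf{B}}^{*}$ in $\bar{\mathbf{C}}^{*}$ as $\bar{\mathbf{B}}_{1},\ldots,\bar{\mathbf{B}}_{m}$, and build $\mathbf{C}^{*}$ by iterated amalgamation: regard $\bar{\mathbf{C}}^{*}$ as $\ell$-partite with empty parts for the indices in $[\ell]\smallsetminus\tau$, and at stage $i$ amalgamate a fresh copy of $\mathbf{B}^{*}$ with the current structure over $\bar{\mathbf{B}}_{i}$, adjoining disjoint new vertices in the parts indexed by $[\ell]\smallsetminus\tau$. Each amalgamation is carried out inside $\mathcal{S}^{*}$, which is possible thanks to the amalgamation property of this class; the linear ordering, the relation $R$, and the parity constraint on the newly introduced points can be extended coherently (for example by invoking the free amalgam in the Fra\"iss\'e class $\mathcal{S}^{*}$). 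The final structure $\mathbf{C}^{*}$ is $\ell$-partite, and its parts indexed by $\tau$ coincide with those of $\bar{\mathbf{C}}^{*}$.

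Now let $\chi$ be an $r$-coloring of the copies of $\mathbf{A}^{*}$ in $\mathbf{C}^{*}$ with trace $\tau$. Every such copy is supported in the parts indexed by $\tau$, hence sits inside $\bar{\mathbf{C}}^{*}$; therefore $\chi$ restricts to a coloring of ${\bar{\mathbf{C}}^{*}\choose \mathbf{A}^{*}}$. By the choice of $\bar{\mathbf{C}}^{*}$, there exists $i$ such that $\chi$ is constant on ${\bar{\mathbf{B}}_{i}\choose \mathbf{A}^{*}}$. Let $\tilde{\mathbf{B}}\in{\mathbf{C}^{*}\choose \mathbf{B}^{*}}$ be the copy of $\mathbf{B}^{*}$ erected over $\bar{\mathbf{B}}_{i}$ at stage $i$ of the construction; then any copy of $\mathbf{A}^{*}$ in $\tilde{\mathbf{B}}$ with trace $\tau$ in $\mathbf{C}^{*}$ lies inside $\bar{\mathbf{B}}_{i}$, so $\chi$ is constant on these copies, proving that $\tilde{\mathbf{B}}$ is monochromatic as desired.

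The main obstacle is the iterative amalgamation step: one must check that gluing many copies of $\mathbf{B}^{*}$ over the various $\bar{\mathbf{B}}_{i}$ inside $\bar{\mathbf{C}}^{*}$ can be done while remaining in $\mathcal{S}^{*}$, i.e.\ so that the parity constraint survives and the additional relations ($<$ and $R$, or equivalently the unary predicates $P^{\mathbf{C}^{*}}_{i,f}$) can be propagated in a consistent way across the successive amalgamation steps. This is however standard once one observes, as discussed before Corollary \ref{cor:Snk}, that an expansion in $\mathcal{S}^{*}$ amounts to fixing a convex order and $\sim$-class labels, both of which can be freely chosen on the freshly added parts, so the free amalgam in $\mathcal{S}^{*}$ does the job.
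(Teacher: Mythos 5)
Your proof is correct and takes essentially the same route as the paper: restrict $\mathbf{B}^{*}$ to its parts indexed by $\tau$, apply the equi-partite Corollary \ref{cor:semigen2} to that $k$-partite structure, and then extend each copy back up to a full copy of $\mathbf{B}^{*}$ by amalgamation in $\mathcal{S}^{*}$, completing the remaining edges afterwards. One small caveat: calling the completion a ``free amalgam'' is a misnomer, since members of $\mathcal{S}$ are complete multipartite and edges between fresh vertices in distinct parts must be filled in; but, as you indicate, they can be filled in consistently with the parity constraint using the convex order and the $P_{i,f}$-labels (as in the $\mathbf{S}[n,k]$ construction), which is exactly the step the paper also leaves implicit.
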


\begin{proof}
Let $\mathbf{D}^{*}$ be the graph induced by the the elements
of $\mathbf{B}^{*}$ with trace $\tau$. Use Corollary \ref{cor:semigen2},
to get a $k$-partite $\mathbf{E}^{*}$ such that
\[
\mathbf{E}^{*}\rightarrow(\mathbf{D}^{*})_{r}^{\mathbf A ^*}.
\]
Now extend each copy of $\mathbf{D}^{*}$ in $\mathbf{E}^{*}$ to a copy
of $\mathbf{B}^{*}$ with trace $[\ell]$, via amalgamation.  Finally, add edges outside
of these copies, as necessary.
\end{proof}

\begin{cor}
\label{cor:semigen4}Suppose $\mathbf A ^*, \mathbf{B}^{*}\in\mathcal{S}^{*}$
are $k$-partite and $\ell$-partite respectively. Then for any $r$ there exists
an $\ell$-partite $\mathbf{C}^{*}\in\mathcal{S}^{*}$ such that
for any $r$-colouring $\chi$ of the copies of $\mathbf A ^*$,
there exists $\tilde{\mathbf{B}}\in{\mathbf C ^{*} \choose \mathbf{B}^{*}}$
such that the colouring $\chi$ restricted to ${\tilde{\mathbf{B}} \choose \mathbf A ^*}$
depends only on the trace.  We symbolize this as
\[
\mathbf C ^{*}\rightarrow(\mathbf{B}^{*})_{r}^{\mathbf A ^*,\mathrm{tr}}.
\]
\end{cor}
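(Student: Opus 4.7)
The strategy is a direct iteration of Corollary \ref{cor:semigen4}'s predecessor, peeling off one trace at a time. Since $\mathbf{A}^{*}$ is $k$-partite and target structures are $\ell$-partite, the possible traces of copies of $\mathbf{A}^{*}$ form a finite collection: they are exactly the $k$-subsets of $[\ell]$. Enumerate them as $\tau_1, \ldots, \tau_m$, where $m = \binom{\ell}{k}$.

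The plan is to build a chain $\mathbf{C}^{*}_0, \mathbf{C}^{*}_1, \ldots, \mathbf{C}^{*}_m$ of $\ell$-partite elements of $\mathcal{S}^{*}$ as follows: set $\mathbf{C}^{*}_0 = \mathbf{B}^{*}$, and for each $i \geq 1$, apply the previous corollary to find $\mathbf{C}^{*}_i$ satisfying
\[
\mathbf{C}^{*}_i \rightarrow (\mathbf{C}^{*}_{i-1})^{\mathbf{A}^{*}, \tau_i}_r.
\]
Then set $\mathbf{C}^{*} = \mathbf{C}^{*}_m$. To verify that this works, given any $r$-colouring $\chi$ of $\binom{\mathbf{C}^{*}}{\mathbf{A}^{*}}$, we extract nested substructures from the outside in: inside $\mathbf{C}^{*} = \mathbf{C}^{*}_m$ find a copy $\mathbf{D}_{m-1}$ of $\mathbf{C}^{*}_{m-1}$ on which $\chi$ is constant over copies of $\mathbf{A}^{*}$ with trace $\tau_m$; inside $\mathbf{D}_{m-1}$ find a copy $\mathbf{D}_{m-2}$ of $\mathbf{C}^{*}_{m-2}$ on which $\chi$ is constant over copies with trace $\tau_{m-1}$; continue until arriving at a copy $\mathbf{D}_0$ of $\mathbf{B}^{*}$. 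Because each stage only further restricts to a substructure, the monochromaticity established at higher levels is preserved, so $\chi$ restricted to $\binom{\mathbf{D}_0}{\mathbf{A}^{*}}$ is constant on each trace class, which is to say that it depends only on the trace.

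There is no essential obstacle here since the whole argument is the familiar ``iterated partition'' template already used in the proof of Corollary \ref{cor:infinite2-2}; the only point to keep in mind is that the induction hypothesis from the predecessor corollary must be applied $m$ times with the relevant trace for each step, and that at no stage is the $\ell$-partite structure of the $\mathbf{C}^{*}_i$ compromised (which is guaranteed by the previous corollary itself).
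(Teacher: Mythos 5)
Your proposal is correct and follows essentially the same route as the paper: enumerate the $\binom{\ell}{k}$ possible traces, iterate the fixed-trace corollary once per trace to build a chain of $\ell$-partite structures ending in $\mathbf{C}^{*}$, and then extract nested copies, noting that monochromaticity on each trace class is preserved under passing to substructures. The paper's proof is just a terser statement of the same iteration.
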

\begin{proof}
Enumerate subsets of $[\ell]$ of size $k$: $\tau_{1}$,
$\ldots$, $\tau_{n}$. Set $\mathbf C ^{*}_{1}\rightarrow(\mathbf{B}^{*})_{r}^{\mathbf A ^*,\tau_{1}}$.
Set $\mathbf C ^{*}_{i}\rightarrow(\mathbf C ^{*}_{i-1})^{\mathbf{A}^{*},\tau_{i}}$
for $i>1$. Finally, set $\mathbf C ^{*}=\mathbf C ^{*}_{n}$.
\end{proof}

We are now ready for the full Ramsey result for $\mathcal{S}^{*}$. 

\begin{thm}
The age of $\mathcal{S}^{*}$ is Ramsey.
\end{thm}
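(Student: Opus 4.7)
My plan is to mimic the proof of Theorem \ref{thm:partite2}, using Corollary \ref{cor:semigen4} in place of the earlier partite Ramsey result. Fix $\mathbf{A}^{*}\subseteq\mathbf{B}^{*}$ in $\mathcal{S}^{*}$, with $\mathbf{A}^{*}$ being $k$-partite and $\mathbf{B}^{*}$ being $\ell$-partite, and fix $r\in\mathbb{N}$. First invoke the classical finite Ramsey theorem to choose $N$ with $N\rightarrow(\ell)_{r}^{k}$.

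The next step is to enlarge $\mathbf{B}^{*}$ to an $N$-partite auxiliary structure $\mathbf{D}^{*}\in\mathcal{S}^{*}$ containing, for every $\tau\in\binom{[N]}{\ell}$, an isomorphic copy of $\mathbf{B}^{*}$ whose trace in $\mathbf{D}^{*}$ is exactly $\tau$. This is obtained by iteratively amalgamating such copies inside $\mathcal{S}^{*}$ with matching part structure, using the transversal-based construction from the beginning of this section to consistently extend edges across the amalgam while respecting the parity constraint. Then apply Corollary \ref{cor:semigen4} to the pair $(\mathbf{A}^{*},\mathbf{D}^{*})$ to produce an $N$-partite $\mathbf{C}^{*}\in\mathcal{S}^{*}$ satisfying $\mathbf{C}^{*}\rightarrow(\mathbf{D}^{*})_{r}^{\mathbf{A}^{*},\mathrm{tr}}$. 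I claim that this $\mathbf{C}^{*}$ witnesses $\mathbf{C}^{*}\rightarrow(\mathbf{B}^{*})_{r}^{\mathbf{A}^{*}}$.

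To verify, given any colouring $\chi:\binom{\mathbf{C}^{*}}{\mathbf{A}^{*}}\rightarrow[r]$, by the choice of $\mathbf{C}^{*}$ there exists $\tilde{\mathbf{D}}\in\binom{\mathbf{C}^{*}}{\mathbf{D}^{*}}$ on which $\chi$ depends only on the trace of $\mathbf{A}^{*}$. This induces a well-defined colouring $\bar{\chi}:\binom{[N]}{k}\rightarrow[r]$, and the choice of $N$ yields $\sigma\in\binom{[N]}{\ell}$ with $\bar{\chi}$ constant on $\binom{\sigma}{k}$. By construction of $\mathbf{D}^{*}$ there is a copy of $\mathbf{B}^{*}$ inside $\tilde{\mathbf{D}}$ with trace $\sigma$, and every copy of $\mathbf{A}^{*}$ inside that copy has trace a $k$-subset of $\sigma$, hence receives the same $\chi$-colour. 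The main obstacle I anticipate is the construction of $\mathbf{D}^{*}$, since the amalgam must respect the parity constraint defining $\mathcal{S}$; this should be routine given the transversal-based expansion machinery developed earlier in this section, but requires some care in specifying edges between different copies.
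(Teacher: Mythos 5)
Your argument is essentially the paper's own proof: the same two-step scheme of applying Corollary \ref{cor:semigen4} to make the colour of a copy of $\mathbf{A}^{*}$ depend only on its trace, and then applying the classical Ramsey theorem to the $k$-subsets of $[N]$ to extract a monochromatic trace pattern supporting a copy of $\mathbf{B}^{*}$. The only difference is that the paper obtains your auxiliary structure $\mathbf{D}^{*}$ simply by taking pairwise disjoint copies of $\mathbf{B}^{*}$ with the prescribed traces inside the Fra\"iss\'e limit of $\mathcal{S}^{*}$, which immediately settles the amalgamation/parity concern you flag.
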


\begin{proof}
Suppose $\mathbf A ^*,\mathbf{B}^{*}\in\mathcal{S}^{*}$, and $r$ are
given. We show that there exists an $\tilde{C}^{*}\in\mathcal{S}^{*}$
such that
\[
\mathcal{\mathbf C ^{*}}\rightarrow(\mathbf{B}^{*})_{r}^{\mathbf A ^*}.
\]
First we apply Ramsey's theorem to the traces and use Corollary
\ref{cor:semigen4}. Let $k\leq \ell \in\mathbb{N}$ such that $\m
B^{*}$ is $\ell$-partite and $\m A^{*}$ is a $k$-partite substructure
of $\m B^{*}$. Using Ramsey's theorem, we can find $m$ such that
$m\rightarrow(\ell)^{k}_{r}$.  Enumerate the $\ell$-subsets of $[m]$
as $\tau_1, \ldots, \tau_\ell$. For each $i\leq \ell$ consider a copy
$\m B ^{*} _{i}$ of $\m B$ with trace $\tau_{i}$ in the Fra\"iss\'e
limit $\mathbf S ^{*}$ of $\mathcal S^{*}$ so that the $\m B^{*}
_{i}$'s are pairwise disjoint.  Set $\mathbf{\bar{B}}^{*}$ to be the
$m$-partite substructure of $\mathbf S^{*}$ supported by the union of
these $\m B_{i} ^{*}$'s.

Using corollary \ref{cor:semigen4}, we can find an $m$-partite $\m
C^{*}$ in $\mathbf S^{*}$ such that every $r$-colouring $\chi$ of ${\m
C^{*} \choose \m A^{*}}$, there exists a copy $\mathbf{\bar{B}}' \in
{\m C \choose
\mathbf{\bar{B}}^{*}}$ such that $\chi$ restricted to ${\mathbf{\bar{B}}' \choose \m A^{*}}$
depends only on the trace.

Define $\bar{\chi} : {m \choose k}
\rightarrow [r]$ by setting $\bar{\chi}(\sigma)$ to be the colour of any
copy of $\m A^{*}$ in $\mathbf{\bar{B}}'$ with trace $\sigma$.  This is well defined
by construction.  By choice of $m$, there is a
$\bar{\chi}$-homogeneous $\tau_{i_0}$ of size $\ell$.  By construction, there is
$\m B'\in{ \mathbf{\bar{B}}' \choose \m B^{*}}$ with trace $\tau_{i_0}$, which completes
the proof. 
\end{proof}

It remains to show that $\mathcal S^{*}$ has the expansion property
relative to $\mathcal S$. In order to do this, fix $\m A \in \mathcal
S$ as well as an expansion $\m A^{*}$ of $\m A$ in $\mathcal
S^{*}$. We will be done once the following two lemmas are proved:

\begin{lem}
There exists $\bar{\m B} \in \mathcal{AS}$ for which any expansion of $\bar{\m B}$ in $\mathcal S^{*}$ contains $\m A^{*}$. 
\end{lem}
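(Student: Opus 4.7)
My plan is to mirror the strategy used for the expansion property in Theorems~\ref{thm:exp7} and~\ref{thm:exp8}: enumerate the finitely many $\mathcal{S}^{*}$-expansions of $\bar{\m A}$, pack them all into a single witness via joint embedding, and then iterate the Ramsey property of $\mathcal{S}^{*}$ on a finite family of two-element ``probe'' structures so that any convex linear ordering on a sufficiently large $\mathcal{AS}$-witness agrees, on some substructure, with a canonical reference ordering up to a controlled sign pattern.

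Concretely, let $\bar{\m A}$ denote the reduct of $\m A^{*}$ to $\mathcal{AS}$, and list as $<^{1}, \ldots, <^{q}$ all convex linear orderings on $\bar{\m A}$ giving elements of $\mathcal S^{*}$; set $\m A^{*}_i=(\bar{\m A},<^{i})$. By the joint embedding property in $\mathcal S^{*}$, fix $\m D^{*} \in \mathcal S^{*}$ containing each $\m A^{*}_i$ as a substructure. Let $\m E_1^{*}, \ldots, \m E_p^{*}$ list the finitely many $\mathcal S^{*}$-isomorphism types of $2$-element structures that occur in $\m D^{*}$. Iterating the already established Ramsey property of $\mathcal S^{*}$ once per probe, produce $\m C^{*} \in \mathcal S^{*}$ with ordering $<^{\m C^{*}}$ such that any simultaneous $2$-colouring of the $\binom{\m C^{*}}{\m E_j^{*}}$'s ($1 \le j \le p$) admits a copy $\tilde{\m D}^{*}$ of $\m D^{*}$ monochromatic on every probe type. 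I take $\bar{\m B}$ to be the $\mathcal{AS}$-reduct of $\m C^{*}$.

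Now consider an arbitrary expansion $(\bar{\m B}, \prec) \in \mathcal S^{*}$. Colour each copy of each probe $\m E_j^{*}$ in $\m C^{*}$ by whether $\prec$ and $<^{\m C^{*}}$ agree on it, and apply Ramsey to obtain a copy $\tilde{\m D}^{*}$ of $\m D^{*}$ on which $\prec$ differs from $<^{\m C^{*}}$ only by a fixed sign pattern $s$ across probe types. Because $\prec$ is itself a valid convex linear order, $s$ is coherent: it amounts to some permutation of parts composed with a permutation of $\sim$-classes inside each part and an arbitrary rearrangement within each $\sim$-class. Applying $s^{-1}$ to $<^{\m A^{*}}$ therefore produces another valid convex linear order on $\bar{\m A}$, hence coincides with some $<^{i_0}$ in the enumeration. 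The chosen embedding of $\m A^{*}_{i_0}$ into $\m D^{*}$, transported to $\tilde{\m D}^{*}$, remains an embedding of $\bar{\m A}$ into $\bar{\m B}$ whose induced ordering under $\prec$ is $<^{i_0}$ twisted by $s$, namely $<^{\m A^{*}}$; hence it is an embedding of $\m A^{*}$ into $(\bar{\m B}, \prec)$, as required.

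The main obstacle is to confirm that the probe colouring, defined purely at the level of $\mathcal S^{*}$-types, faithfully records the comparison between $\prec$ and $<^{\m C^{*}}$ once we pass to the $\mathcal{AS}$-reduct $\bar{\m B}$, and that the induced sign pattern really can be absorbed by a permutation of parts and $\sim$-classes together with a rearrangement of elements inside $\sim$-classes. This is where Corollary~\ref{cor:Snk} is essential: it ensures that in the $\mathcal{AS}$-structure any reshuffling of parts or of $\sim$-classes is already realised by an automorphism of $\bar{\m B}$, so the sign pattern $s$ is a well-defined invariant of the pair $(\prec, <^{\m C^{*}})$ and the twisted ordering $s^{-1} \cdot <^{\m A^{*}}$ indeed appears in our enumeration.
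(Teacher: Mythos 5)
Your overall architecture is the same as the paper's (enumerate finitely many admissible orderings of the reduct $\bar{\m A}$, pack them into one structure by joint embedding, and apply the Ramsey property of $\mathcal S^{*}$ to two-element probes coloured by agreement of $\prec$ with the reference order), but there is a genuine gap at the crucial coherence step. You assert that the monochromatic sign pattern $s$ ``amounts to some permutation of parts composed with a permutation of $\sim$-classes inside each part and an arbitrary rearrangement within each $\sim$-class,'' and you justify absorbing this pattern by appealing to Corollary~\ref{cor:Snk}, claiming it ensures that ``any reshuffling of parts or of $\sim$-classes is already realised by an automorphism of $\bar{\m B}$.'' Corollary~\ref{cor:Snk} says nothing of the sort: it states that all expansions of the specific structure $\m S[n,k]$ in $\mathcal{AS}$ (i.e.\ all ways of adding the relation $R$ to its edge reduct) are isomorphic, and it is used for the companion lemma, not here. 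It gives no information about automorphisms of your Ramsey witness $\bar{\m B}$, and a general $\bar{\m B}$ produced by the Ramsey theorem certainly need not admit automorphisms permuting its parts. Moreover, if a nontrivial permutation of parts really could occur in $s$, your key step would fail: $s^{-1}\cdot <^{\m A^{*}}$ would then not be an admissible ordering of $\bar{\m A}$ at all, because for a fixed $\mathcal{AS}$-structure the relative order of the parts in any $\mathcal S^{*}$-expansion is \emph{determined} by $E$ and $R$ (the direction of every cross-part edge is recovered from $R$ and the part order via the parity constraint, as described in the paper), so no reordering of parts yields a member of $\mathcal S^{*}$.

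The correct justification, which your write-up is missing, is precisely this determinacy: since $E^{\bar{\m B}}$ and $R^{\bar{\m B}}$ already fix the relative order of the parts, any expansion $(\bar{\m B},\prec)\in\mathcal S^{*}$ agrees with the reference order $<^{\m C^{*}}$ on \emph{every} cross-part pair, so all cross-part probes are automatically coloured ``agree,'' and the only discrepancy the monochromatic copy can exhibit is a reversal of the order on within-part pairs (note that a two-element substructure inside a part carries no $E$ or $R$ data, so there is a single such probe type, giving a global agree-or-reverse, not an arbitrary within-class rearrangement). With that observation, $s^{-1}\cdot <^{\m A^{*}}$ is indeed again an admissible ordering of $\bar{\m A}$, it occurs in your enumeration, and the rest of your transfer argument goes through. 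This is exactly how the paper argues: it builds $\hat{\m A}^{*}$ containing all partwise order-reversals of $\m A^{*}$ and runs the edge/non-edge colouring scheme of Theorem~\ref{thm:exp7}, with Corollary~\ref{cor:Snk} reserved for the second lemma about expansions in $\mathcal{AS}$.
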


\begin{proof}
Consider all the expansions $\m A_{0} ^{*},...,\m A_{s} ^{*}$ of $\m
A$ obtained from $\m A^{*}$ by reversing $<^{\m A^{*}}$ on some
parts. Let $\hat{\m A }^{*}$ be $k$-partite and containing them
all. Then $\m A^{*}$ embeds in $\hat{\m A}^{*}$, even when $<^{\hat{\m
A}^{*}}$ is reversed in any of its parts. Next, use the Ramsey
property in $\mathcal S ^{*}$ with colourings of edges and non-edges
as in Theorem \ref{thm:exp7} to create $\bar{\m B}^{*}$ so that any
convex linear ordering $\prec$ on $\bar{\m B}^{*}$ is so that
$(\bar{\m B}^{*}, E^{\bar{\m B}^{*}}, R^{\bar{\m B}^{*}}, \prec)$
contains a copy of $\hat{\m A}^{*}$ up to a reversing of $\prec$ in
some of the parts. Then, the reduct $\bar{\m B}$ of $ \bar{\m B}^{*}$
in $\mathcal{AS}$ is as required.
\end{proof}

\begin{lem}
There exists $\m B \in \mathcal S$ for which any expansion in
$\mathcal{AS}$ contains $\bar{\m B}$.
\end{lem}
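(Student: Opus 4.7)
The plan is to take $\m B = \m S[n,k]$ as in Lemma \ref{lem:Snk}, for well-chosen parameters $n$ and $k$. Let $k$ be the number of parts of $\bar{\m B}$ and let $n$ be an upper bound on the size of every $\sim^{\bar{\m B}}$-class (one may take $n = |\bar B|$ for simplicity). By Corollary \ref{cor:Snk}, every $\mathcal{AS}$-expansion of $\m B$ is isomorphic to the canonical expansion $\m S[n,k]^{*}$ constructed explicitly just before that corollary. It therefore suffices to show that $\bar{\m B}$ embeds as an $\mathcal{AS}$-substructure of $\m S[n,k]^{*}$.

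To construct this embedding, I would enumerate the parts of $\bar{\m B}$ as $P^{\bar{\m B}}_0, \ldots, P^{\bar{\m B}}_{k-1}$ and send each $P^{\bar{\m B}}_i$ into $P^{\m S[n,k]^{*}}_i$. Each $u \in P^{\bar{\m B}}_i$ has a well-defined type $f_u \in [2]^{[k]\smallsetminus\{i\}}$, defined by $f_u(j) = 0$ iff $R^{\bar{\m B}}(x,u)$ for some (equivalently, every) $x \in P^{\bar{\m B}}_j$. Map $u$ injectively into $P^{\m S[n,k]^{*}}_{i,f_u}$, which by Lemma \ref{lem:Snk} has exactly $n$ elements and therefore has enough room for all elements of $\bar{\m B}$ of any fixed type. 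The relation $R$ is preserved automatically, since in both structures $R(x,u)$ depends only on the part of $x$ and on the type of $u$; and once $R$ and the partition into parts agree, the semigeneric parity constraint forces the edge relation between distinct parts to agree as well, while edges within a part are forbidden in both structures. Hence $E$ is preserved too, and we have the desired $\mathcal{AS}$-embedding.

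Combining this embedding with the isomorphism between an arbitrary $\mathcal{AS}$-expansion of $\m B$ and the canonical one (Corollary \ref{cor:Snk}), we conclude that $\bar{\m B}$ embeds as an $\mathcal{AS}$-substructure of every $\mathcal{AS}$-expansion of $\m B$, as required.

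The delicate point in executing this plan, and the one I would need to spell out carefully, is the verification that matching $R$ together with the partition into parts really does pin down the edge relation. This depends on the observation made in the discussion following Lemma \ref{lem:Snk}: the edge relation of the intermediate structure $\bar{\m A}$, and hence of $\m A^{*}$, can be recovered from $R^{\m A^{*}}$ together with the labeling of the parts via the parity constraint. Once this is made explicit, the embedding above is a genuine $\mathcal{AS}$-embedding and the argument concludes.
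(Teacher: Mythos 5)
Your overall strategy is exactly the paper's: take $\m B=\m S[n,k]$ for $n,k$ large enough and use Corollary \ref{cor:Snk} to reduce everything to embedding $\bar{\m B}$ into the single (up to isomorphism) $\mathcal{AS}$-expansion of $\m S[n,k]$. The gap is precisely at the point you flag as delicate, and your resolution of it is not correct: it is not true that once a map respects $R$ and the unordered partition into parts, the parity constraint forces the edge relation to agree. Identifying $E$ with its characteristic function, for $u$ in a part $P$ and $x$ in a different part $P'$, the parity constraint applied to the quadruple $\{u,t,x,t'\}$ (where $t,t'$ are the transversal points attached to $P,P'$) gives $E(u,x)\equiv E(u,t')+E(t,x)+E(t,t') \pmod 2$. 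The first two terms on the right are indeed encoded by $R$ and the types $f_u,f_x$, but the third term, the orientation of the edge between the two transversal points, is \emph{not} determined by $R$ and the partition; it comes from the linear order that was removed when passing to $\mathcal{AS}$. Concretely, two structures in $\mathcal{AS}$ on the same parts can carry identical $R$ and reversed cross-part edges, so with an arbitrary enumeration of the parts of $\bar{\m B}$ your part-and-type-preserving injection need not preserve $E$. (The observation in the paper that you invoke recovers $E$ from $R$ \emph{together with} $<^{\m A^*}$, not from $R$ and the parts alone.)

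The repair is short. Since $\bar{\m B}$ is the reduct of a structure built over a transversal that is a transitive tournament, the induced orientation between the parts of $\bar{\m B}$ is a linear order, and it is recoverable from $(E^{\bar{\m B}},R^{\bar{\m B}})$ via the parity identity above (the value $E(t,t')\equiv E(u,x)+E(u,t')+E(t,x)$ is independent of the chosen $u,x$). Enumerate the parts of $\bar{\m B}$ along this order; since in $\m S[n,k]^{*}$ the imaginary transversal satisfies $E(t_{i'},t_i)$ exactly when $i'<i$, the parity identity now gives the same value of $E(u,x)$ on both sides, and your injection sending each $u$ in part $i$ of type $f_u$ into $P^{\m S[n,k]^{*}}_{i,f_u}$ (possible once $k$ is the number of parts of $\bar{\m B}$ and $n$ bounds the size of every $\sim^{\bar{\m B}}$-class, by Lemma \ref{lem:Snk}) becomes a genuine $\mathcal{AS}$-embedding. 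With this one-line correction your argument coincides with the paper's proof, which simply observes that $\bar{\m B}$ embeds into $\m S[n,k]$ for $n,k$ large and concludes by Corollary \ref{cor:Snk}.
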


\begin{proof}
Observe that for $k$ and $n$ large enough, we can embed the reduct of $\bar {\m B}$ in $\m
S[n,k]$. Then $\m S[n,k]$ has the required property thanks to
Corollary \ref{cor:Snk}.
\end{proof}

\newpage
\section{Conclusion} \label{section:conclusion}

Having constructed Ramsey precompact expansions of all homogeneous
directed graphs and having verified the expansion property in each
case, we are automatically given a list of the respective universal
minimal flows.  Suppose $\mathbf{F}$ is one of the homogeneous
directed graphs and
$\textbf{F}^{*}=(\mathbf{F},\overrightarrow{R^{*}})$ is its precompact
expansion which we have constructed.  The age of $\textbf{F}^{*}$
satisfies the Ramsey property as well as the respective expansion
property and consists of rigid elements.  Consequently, its
automorphism group is extremely amenable.  Let $G$ be the automorphism
group of $\mathbf{F}$.  Moreover, recall that the logic action of $\m
G$ on $\overline{\mathbf{G}\cdot
\overrightarrow{R^{*}}}$ is the universal minimal flow (see Theorem
\ref{thm:UMF}).  In particular, we arrive at the following table,
summarizing the results of this paper, where we omit the cases when $\m T$
is finite (in these cases  refer to the the definitions preceding Theorem \ref{FinTOmega-exp} and Theorem
\ref{IomegaFinT-exp}).

\smallskip

\begin{tabular}{|c|p{0.45\textwidth}|}
\hline 
Homogeneous directed graph(s) $\mathbf{F}$ & Expanded language  $\overrightarrow{R^{*}}$\tabularnewline
\hline 
\hline 
$\mathbf{T}[I_{n}]$, $I_{n}[\mathbf{T}]$ $\hat{\mathbf{T}}$ &
Relations giving the expanded tournament ${\m T}^{*}$ the Ramsey property and the expansion property;
$R_{1}^{\mathbf{F}^{*}},\ldots,R_{n}^{\mathbf{F}^{*}}$ (unary relations which are transversals in the case of $\mathbf{T}[I_{n}]$,  distinguishing
copies of \textbf{$\mathbf{T}$} in the case of   $I_{n}[\mathbf{T}]$, or the copies of the generic partial
order in the case of $\mathcal{P}(3)$), and $<^{*}$ (convex linear ordering on \textbf{$\mathbf{F}^{*}$})\tabularnewline
\hline 
$\mathcal{P}(3)$ & $R_{0}^{\mathbf{F}^{*}},R_{1}^{\mathbf{F}^{*}},R_{2}^{\mathbf{F}^{*}}$ (unary relations distinguishing the copies of the generic partial
order), and $<^{*}$ (linear ordering extending the underlying generic partial order)
\tabularnewline
\hline 
$\mathbf{T}[I_{\omega}]$, $I_{\omega}[\mathbf{T}]$ &
Relations giving the expanded tournament ${\m T}^{*}$ the Ramsey property and the expansion property;
$<^{*}$ (convex linear ordering on \textbf{$\mathbf{F}^{*}$})\tabularnewline
\hline 
$n*I_{\omega}$ & $P_{1}^{\mathbf{F}^{*}},\ldots,P_{n}^{\mathbf{F}^{*}}$ (unary relations distinguishing
each part of $n*I_{n}$), and $<^{*}$ (convex linear ordering on \textbf{$\mathbf{F}^{*}$})\tabularnewline
\hline 
$\omega*I_{\omega}$ & $<^{*}$ (convex linear ordering on \textbf{$\mathbf{F}^{*}$})\tabularnewline
\hline 
semigeneric & $<^{*}$ (convex linear ordering on \textbf{$\mathbf{F}^{*}$}), and $R^{\m F^*}$ (sub-partitioning binary relation) \tabularnewline
\hline 
\end{tabular}

Note that the relations including the linear orders in the above table
arise from taking the Fra\"{i}ss\'{e} limits of our expanded
classes. Note that the semigeneric case is the only one where a special binary
relation was used (in addition to a linear order). \\

Completing the list of Ramsey precompact expansions of homogeneous
structures and subsequently computing their universal minimal flows is
an ever continuing undertaking.  For example, we have examples of
homogeneous metric spaces whose Ramsey precompact expansions are still
unknown.  In particular from the work of Cherlin in \cite{key-C13},
and related more closely to the subject of this paper, are classes of
\emph{metrically} homogeneous graphs, i.e., ones that are homogeneous
with respect to the graph metric.

\end{document}